\newcommand\blam{{\boldsymbol\lambda}}
\newcommand\bmu{{\boldsymbol\mu}}
\def\({\big(}
\def\){\big)}
\def\Z{\mathbb{Z}}
\def\Q{\mathbb{Q}}
\def\C{\mathbb{C}}
\def\N{\mathbb{N}}
\def\lam{\lambda}
\def\Lam{\Lambda}
\def\Sym{\mathfrak{S}}
\def\eps{\varepsilon}
\def\rf{\mathcal{F}}
\def\bn[#1,#2]{\begin{bmatrix}#1\\#2\end{bmatrix}}
\def\op{{\text{op}}}
\def\lnab{\overline{\nabla}}
\def\ldel{\overline{\Delta}}
\newcommand\bk{\mathbf{k}}
\DeclareMathOperator\lmod{\!-mod}
\DeclareMathOperator\Hom{Hom}
\DeclareMathOperator\End{End}
\DeclareMathOperator\Ext{Ext}
\DeclareMathOperator\soc{soc}
\DeclareMathOperator\im{Image}
\DeclareMathOperator\head{hd}
\DeclareMathOperator\rad{rad}
\DeclareMathOperator\cha{char}
\DeclareMathOperator\id{id}
\DeclareMathOperator\Tr{Tr}
\DeclareMathOperator\Ker{Ker}
\DeclareMathOperator\add{add}
\def\bb{\mathfrak{B}}
\def\co{\mathcal{O}}
\def\fh{\mathfrak{h}^{\ast}}
\def\bk{{\overline{K}}}
\title{Tilting modules, dominant dimensions and Brauer-Schur-Weyl duality}
\author{Jun Hu}
\address[Jun Hu]{School of Mathematics and Statistics, Beijing Institute of Technology, Beijing, 100081, P.R.~China}
\email{junhu404@bit.edu.cn}
\author{Zhankui Xiao}
\address[Zhankui Xiao]{School of Mathematical Sciences, Huaqiao University,
Quanzhou, Fujian, 362021, P. R. China}
\email{zhkxiao@hqu.edu.cn}
\subjclass[2010]{16D90, 20G05, 20G43, 05E10}
\keywords{Quasi-hereditary algebras, tilting modules, dominant dimensions, Brauer algebras, symplectic groups}
\numberwithin{equation}{section}
\newtheorem{prop}[equation]{Proposition}
\newtheorem{thm}[equation]{Theorem}
\newtheorem{cor}[equation]{Corollary}
\newtheorem{conj}[equation]{Conjecture}
\newtheorem{lem}[equation]{Lemma}
\theoremstyle{definition}
\newtheorem{dfn}[equation]{Definition}
\theoremstyle{remark}
\newtheorem{rem}[equation]{Remark}
\newtheorem{exmp}[equation]{Example}
\begin{document}

\begin{abstract}
In this paper we use the dominant dimension with respect to a tilting module to study the double centraliser property. We prove that if $A$ is a quasi-hereditary algebra with a simple preserving duality and $T$ is a faithful tilting $A$-module, then $A$ has the double centralizer property with respect to $T$. This provides a simple and useful criterion which can be applied in many situations in algebraic Lie theory. We affirmatively answer a question of Mazorchuk and Stroppel by proving the existence of a unique minimal basic tilting module $T$ over $A$ for which $A=\End_{\End_A(T)}(T)$.
As an application, we establish a Schur-Weyl duality between the symplectic Schur algebra $S_K^{sy}(m,n)$ and
the Brauer algebra $\mathfrak{B}_n(-2m)$
on the space of dual partially harmonic tensors under certain condition.
\end{abstract}

\maketitle
\setcounter{tocdepth}{1}

\maketitle

\section{Introduction}


Let $k$ be a field. Let $A$ be a finite dimensional $k$-algebra with identity element. Let $A\lmod$ be the category of finite dimensional left $A$-modules. For any $M\in A\lmod$, we use $\add(M)$ to denote the full subcategory of direct summands of finite direct sums of $M$.

Let $T\in A\lmod$. We define $A':=\End_A(T)$. Then $T\in A'\lmod$. We next define $A'':=\End_{A'}(T)$. Then there is a canonical algebra homomorphism $A\rightarrow A''$. Similarly, we define $A''':=\End_{A''}(T)$. It is well-known that the canonical algebra homomorphism $A'\rightarrow A'''$ is an isomorphism.

\begin{dfn} Let $T\in A\lmod$. We say $A$ has the double centraliser property with respect to $T$ if the canonical algebra homomorphism $A\rightarrow A''$ is surjective.
\end{dfn}

\begin{exmp} Let $_AA$ be the left regular $A$-module. Then $A$ has the double centraliser property with respect to $_AA$. In fact, $A'=A^{\rm{op}}$ and $A''=A$.
\end{exmp}

\begin{exmp} If $P\in A\lmod$ is a progenerator, then $A$ has the double centraliser property with respect to $P$.
\end{exmp}

The double centralizer property plays a central role in many part of the representation theory in algebraic Lie theory. For example, the Schur-Weyl duality between
the general linear group $GL(V)$ and the symmetric group $\Sym_r$ on the $r$-tensor space $V^{\otimes r}$ (\cite{Weyl}, \cite{CL}, \cite{DP}) implies that the Schur algebra $S(n,r)$ has the double centralizer property with respect to $V^{\otimes r}$. Similarly, the Schur-Weyl duality between the symplectic group $Sp(V)$ (resp., orthogonal group $O(V)$) and the specialized Brauer algebra $\mathfrak{B}_n(-\dim V)$ (resp., $\mathfrak{B}_n(\dim V)$)
on the $n$-tensor space $V^{\otimes n}$ (\cite{Brauer}, \cite{Brown}, \cite{DP}, \cite{DDH}, \cite{DH}) implies that the symplectic Schur algebra (resp., the orthogonal Schur algebra) has the double centralizer property with respect to $V^{\otimes n}$. For quantized version of these classical Schur-Weyl dualities, we refer the readers to \cite{CP},  \cite{Du}, \cite{DPS}, \cite{Ha}, \cite{Hu2} and \cite{J}. The combinatorial $\mathbb{V}$-functor (due to Soergel \cite{So}) plays a crucial role in the study of the principal blocks of the BGG category $\mathcal{O}$ of any semisimple Lie algebras. The key property of this functor relies on the double centralizer property of the corresponding basic projective-injective module. A similar idea is used in the study of the category $\mathcal{O}$ of the rational Cherednik algebras \cite{GGOR}. For more examples and applications of the double centralizer property in higher Schur-Weyl duality, quantum affine Schur-Weyl duality, etc., we refer the readers to \cite{BK}, \cite{CP2} and \cite{DDF}.

If $T\in A\lmod$  is a faithful $A$-module, then the double centralizer property of $T$ is often closely related to the fully faithfulness of the hom functor $\Hom_A(T,-)$ on projectives. Recall that the hom functor $\Hom_A(T,-)$ is said to be fully faithful on projectives if for any projective modules $P_1,P_2\in A\lmod$, the natural map $$\begin{aligned}
\theta: \Hom_A(P_1,P_2)&\rightarrow\Hom_{(\End_A(T))^{\rm{op}}}(\Hom_A(T,P_1),\Hom_A(T,P_2))\\
f&\mapsto\theta_f: h\mapsto f\circ h .
\end{aligned}$$ is an isomorphism.

For a faithful $A$-module $T$, it is well-known that $A$ has the double centraliser property with respect to $T$ if
and only if the hom functor $\Hom_A(T,-)$ is fully faithful on injectives.
The following result relates the double centralizer property of $T$ to the fully faithfulness of the hom functor
$\Hom_A(T,-)$ on projectives and we leave its proof to the readers.

\begin{lem}\label{ff2} Suppose that $A$ has an anti-involution $\ast$. Let $M$ be a faithful projective submodule of the left regular $A$-module $A$ such that $M=Ae$, where $0\neq e\in A$ is an idempotent with $e^\ast=e$. Then $A$ has the double centralizer property with respect to $M$ if and only if the hom functor $\Hom_A(M,-)$ is fully faithful on projectives.
\end{lem}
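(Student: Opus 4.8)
The plan is to reduce to a single idempotent and then use the anti-involution $\ast$ to identify the double centraliser map and the map $\theta$ as each other's ``transpose''. First I would pass to $\widetilde A:=M_t(A)$, equipped with the idempotent $\eps:=\diag(e_1,\dots,e_t)$ and the anti-involution $\tau$ that transposes a matrix and applies $\ast$ to each entry; since $e_i^\ast=e_i$ we have $\tau(\eps)=\eps$. Under the Morita equivalence $A\lmod\simeq\widetilde A\lmod$, the module $M=\bigoplus_i Ae_i$ corresponds to $\widetilde A\eps$, the functor $\Hom_A(M,-)$ corresponds to $\eps(-)\colon\widetilde A\lmod\to B\lmod$ where $B:=\eps\widetilde A\eps$, the $\End_A(M)$-module $M$ corresponds to $\widetilde A\eps$ viewed as a right $B$-module, $\Hom_A(M,A)$ corresponds to $\eps\widetilde A$ viewed as a left $B$-module, and the canonical homomorphism $A\to\End_{\End_A(M)}(M)$ corresponds to $\lambda\colon\widetilde A\to\End_B(\widetilde A\eps)$, $a\mapsto(x\mapsto ax)$. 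Since both the double centraliser property and full faithfulness on projectives are invariant under Morita equivalence, and $\widetilde A\eps$ is faithful because $M$ is, it suffices to prove the lemma when $A=\widetilde A$, $M=A\eps$ with $\eps=\eps^\ast$, and $A\eps$ faithful.

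In this situation the double centraliser property is precisely the surjectivity of $\lambda\colon A\to\End_B(A\eps)$, which is automatically injective since $A\eps$ is faithful. On the other side, $\Hom_A(A\eps,N)=\eps N$, and since $\theta$ is natural and additive in $P_1,P_2$ while every projective is a summand of a finite direct sum of copies of $A$, a retract argument shows that $\theta_{P_1,P_2}$ is an isomorphism for all projective $P_1,P_2$ if and only if $\theta_{A,A}$ is. Using $\Hom_A(A,A)\cong A^{\op}$ and $\Hom_A(A\eps,A)\cong\eps A$, the map $\theta_{A,A}$ becomes $\rho\colon A^{\op}\to\End_B(\eps A)$, $a\mapsto(y\mapsto ya)$, where $\eps A$ carries its natural left $B$-module structure; and $\rho$ is injective because $\eps A$ is faithful as a right $A$-module, which follows from faithfulness of $A\eps$ since $\tau$ interchanges the two. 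Thus $\lambda$ and $\theta_{A,A}$ are both injective, so ``$A$ has the double centraliser property'' $\iff$ $\lambda$ is surjective, while ``$\Hom_A(M,-)$ is fully faithful on projectives'' $\iff$ $\theta_{A,A}$ is surjective.

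The key step is to match these two surjectivities. The anti-involution $\tau$ restricts to an anti-involution of $B$, and since $\tau(A\eps)=\eps A$ it yields an additive bijection $A\eps\xrightarrow{\sim}\eps A$ which converts left multiplication by $a\in A$ into right multiplication by $\tau(a)$, and right multiplication by $b\in B$ into left multiplication by $\tau(b)$. Conjugating by this bijection gives a bijection $\End_B(A\eps)\xrightarrow{\sim}\End_B(\eps A)$ sending $\lambda_a$ to $\rho_{\tau(a)}$; as $\tau$ permutes $A$, this carries $\im\lambda$ onto $\im\rho$, so $\lambda$ is surjective if and only if $\rho=\theta_{A,A}$ is surjective. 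Chaining the equivalences: $A$ has the double centraliser property with respect to $M$ $\iff$ $\lambda$ is surjective $\iff$ $\theta_{A,A}$ is surjective $\iff$ $\theta_{A,A}$ is an isomorphism $\iff$ $\Hom_A(M,-)$ is fully faithful on projectives, which is the assertion.

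I expect the only genuine work to be the bookkeeping of left/right structures: checking in the first step that the Morita equivalence transports the functor, the two endomorphism algebras and the canonical map as claimed, and checking that under the identifications of Hom-spaces with corners of $A$ the conjugation by $\tau$ really does send $\lambda_a$ to $\rho_{\tau(a)}$. One can bypass the passage to $M_t(A)$ and instead argue directly with $M=\bigoplus_i Ae_i$ via the duality $\D=(-)^{\ast}\circ\Hom_A(-,A)$ on $A$-projectives, for which $\D(Ae_i)\cong Ae_i$ and hence $\D(M)\cong M$; the anti-involution then enters through this self-duality of $M$, but the verifications are of the same nature.
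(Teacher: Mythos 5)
Your proof is correct and follows essentially the same route as the paper's: reduce to a single $\ast$-fixed idempotent, use the anti-involution to conjugate the double-centraliser map $A\to\End_{(eAe)^{\rm op}}(Ae)$ into $A^{\rm op}\to\End_{eAe}(eA)$, identify the latter with $\theta_{A,A}$, and extend to all projectives by a retract argument. Your explicit passage to $M_t(A)$ with the transpose-$\ast$ anti-involution is a welcome justification of the paper's terse ``without loss of generality we can assume $M=Ae$''.
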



Let $T\in A\lmod$  be a faithful $A$-module. When $T$ is not semisimple, it is often difficult to check the double centralizer property of $A$ with respect to $T$ (i.e., whether $A=\End_{\End_A(T)}(T)$ or not) directly.
K\"onig, Slung{\aa}rd and Xi in \cite{KSX} studied the double centralizer property using the notion of dominant dimension. To state their result, we recall the following definition.

\begin{dfn}\text{(\cite[2.5]{KSX})} Let $\mathcal{C}$ be a subcategory of $A\lmod$. Let $M\in A\lmod$ and $C\in\mathcal{C}$. A homomorphism $f: M\rightarrow C$ is called a left $\mathcal{C}$-approximation of $M$ if and only if the induced morphism $
\Hom_A(f,-): \Hom_A(C,D)\rightarrow\Hom_A(M,D)$ is surjective for all objects $D$ in $\mathcal{C}$.
\end{dfn}

\begin{rem}\label{rem1} 1) For any finite dimensional $k$-modules $M, T\in A\lmod$, if $\Hom_{A}(M,T)$ has $k$-dimension $n$, then any homomorphism $M\rightarrow T^{\oplus n}$ is a left $\add(T)$-approximation of $M$.

2) Let $1\leq r\in\N$. Since there is a Morita equivalence between $\End_A(T)$ with $\End_A(T^{\oplus r})$ which sends the $\End_A(T)$-module $T$ to the
$\End_A(T^{\oplus r})$-module $T^{\oplus r}$, it follows that $$
\End_{\End_A(T^{\oplus r})}(T^{\oplus r})\cong\End_{\End_A(T)}(T).
$$
\end{rem}

\begin{dfn}\text{(\cite[2.1]{KSX})}  Let $T, X\in A\lmod$. Then the dominant dimension of $X$ relative to $T$ is the supremum of all $n\in\N$ such that there exists an exact sequence $$
0\rightarrow X\rightarrow T_1\rightarrow T_2\rightarrow\cdots\rightarrow T_n ,
$$ with all $T_i$ in $\add(T)$.
\end{dfn}

The following theorem gives a necessary and sufficient condition for which $A$ has the double centraliser property with respect to a faithful $A$-module $T$.

\begin{thm}\text{(\cite[2.8]{KSX}, \cite[2.1]{AS}, \cite{Ta})}\label{ASthm} Let $T\in A\lmod$. Then the canonical map $A\rightarrow\End_{\End_A(T)}(T)$ is an isomorphism if and only if there exists
an injective left $\add(T)$-approximation $0\rightarrow A\overset{\delta}{\rightarrow}T^{\oplus r}$ which can be continued to an exact sequence
$0\rightarrow A\overset{\delta}{\rightarrow}T^{\oplus r}\overset{\varepsilon}{\rightarrow} T^{\oplus s}$ for some $r,s\in\N$.
\end{thm}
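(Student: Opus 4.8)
The plan is to recognize the canonical map $A\to\End_{\End_A(T)}(T)$ as a biduality map and to transport the given homological data across it. Set $B:=\End_A(T)$, regard $\Hom_A(-,T)$ as a contravariant functor $A\lmod\to B\lmod$ (with $B$ acting by post-composition) and $\Hom_B(-,T)$ as a contravariant functor the other way, and put $\mathbf{D}:=\Hom_B(\Hom_A(-,T),T)$. Evaluation $x\mapsto(f\mapsto f(x))$ defines a natural transformation $\mu\colon\id\Rightarrow\mathbf{D}$. Under the identifications $\Hom_A(A,T)\cong{}_BT$ and $\Hom_A(T,T)=B$ one has $\mathbf{D}(A)\cong\End_B(T)$, and a direct check shows that $\mu_A$ is exactly the canonical map of the statement, while $\mu_T$ corresponds to $\id_T$ under $\Hom_B(B,T)\cong T$; hence $\mu_M$ is an isomorphism for every $M\in\add(T)$. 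So the theorem amounts to: $\mu_A$ is an isomorphism $\iff$ the stated sequence exists.

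For the implication $(\Leftarrow)$, start from the given exact sequence $0\to A\xrightarrow{\delta}T^{\oplus r}\xrightarrow{\varepsilon}T^{\oplus s}$ with $\delta$ a left $\add(T)$-approximation. First I would replace $\varepsilon$ by $(\varepsilon,\rho\pi)$, where $\pi\colon T^{\oplus r}\twoheadrightarrow\operatorname{coker}\delta$ is the quotient and $\rho$ is any left $\add(T)$-approximation of $\operatorname{coker}\delta$; this preserves exactness and the approximation property of $\delta$, and in addition makes $\operatorname{coker}\delta\hookrightarrow T^{\oplus s}$ a left $\add(T)$-approximation. Applying $\Hom_A(-,T)$ to $0\to A\to T^{\oplus r}\to\operatorname{coker}\delta\to 0$ and using both approximation properties yields an exact sequence of left $B$-modules $B^{\oplus s}\xrightarrow{\varepsilon^{*}}B^{\oplus r}\xrightarrow{\delta^{*}}{}_BT\to 0$, i.e.\ a projective presentation of ${}_BT$. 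Applying the left-exact functor $\Hom_B(-,T)$ to it returns an exact sequence $0\to\mathbf{D}(A)\xrightarrow{\mathbf{D}\delta}\mathbf{D}(T^{\oplus r})\xrightarrow{\mathbf{D}\varepsilon}\mathbf{D}(T^{\oplus s})$, which by naturality of $\mu$ sits in a commutative ladder above the original sequence with $\mu_{T^{\oplus r}}$ and $\mu_{T^{\oplus s}}$ isomorphisms; a short diagram chase (the five lemma) then forces $\mu_A$ to be an isomorphism.

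For the implication $(\Rightarrow)$, assume $\mu_A$ is an isomorphism, i.e.\ $A=\End_B(T)$. I would choose a projective presentation $Q_1\xrightarrow{g}Q_0\xrightarrow{h}{}_BT\to 0$ with $Q_0,Q_1$ finitely generated projective (legitimate under the ambient finiteness hypotheses; automatic over a field). Applying $\Hom_B(-,T)$ and using $\Hom_B(B,T)\cong{}_AT$ and $\Hom_B(T,T)=A$ gives an exact sequence $0\to A\xrightarrow{\delta}T^{\oplus r}\xrightarrow{\varepsilon}T^{\oplus s}$ with $\delta$ injective and $T^{\oplus r},T^{\oplus s}\in\add(T)$. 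It remains to check that $\delta$ is a left $\add(T)$-approximation, equivalently that $\Hom_A(\delta,T)\colon\Hom_A(T^{\oplus r},T)\to\Hom_A(A,T)$ is surjective. Here I would use that the companion biduality map $\nu_{Q_0}\colon Q_0\to\Hom_A(\Hom_B(Q_0,T),T)$, $q\mapsto(\phi\mapsto\phi(q))$, is an isomorphism for $Q_0$ finitely generated projective (verify it on ${}_BB$ and extend additively); naturality of $\nu$ along $h$ identifies $\Hom_A(\delta,T)$ with $h$ up to these isomorphisms, and $h$ is surjective.

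The hard part is the bookkeeping — keeping the left/right $A$- and $B$-module structures straight through two contravariant $\Hom$-functors — together with, in the direction $(\Rightarrow)$, supplying the finiteness needed to replace the notion of left $\add(T)$-approximation by a genuine finitely generated projective presentation of ${}_BT$; this is exactly where the hypotheses on $k$ (and the finite-generation of modules) enter, and it is carried out with care in \cite{KSX}, \cite{AS}, \cite{Ta}.
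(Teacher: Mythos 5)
The paper does not prove this statement: it is quoted verbatim from \cite[2.8]{KSX}, \cite[2.1]{AS} and \cite{Ta}, so there is no internal proof to compare against. Your reconstruction is correct and is essentially the standard double-dual argument of those references: identifying the canonical map with the evaluation transformation $\mu\colon\id\Rightarrow\Hom_{B}(\Hom_A(-,T),T)$, which is an isomorphism on $\add(T)$, turning the approximation data into a finitely generated projective presentation of ${}_BT$ and dualizing back (for $\Leftarrow$), and dualizing a free presentation of ${}_BT$ (for $\Rightarrow$). The only places where hypotheses beyond the bare statement are used are the existence of a left $\add(T)$-approximation of $\operatorname{coker}\delta$ and of a finitely generated free presentation of ${}_BT$; both are covered by the finiteness assumptions recorded in Remark \ref{rem1} and are automatic in the finite-dimensional setting in which the theorem is applied in this paper, so I see no gap.
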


In particular, the above condition means that there exists an injective left $\add(T)$-approximation of $A$ and the $T$-dominant dimension of $A$ is at least two. In general, it is relatively easy to make $\delta$ into an $\add(T)$-approximation, but it is hard to show that the cokernel of the map $\delta$ can be embedded into $T^{\oplus s}$ for some $s\in\N$. By the way, the above theorem actually holds for any finitely generated algebra over a commutative noetherian domain, though we only concentrate on the finite dimensional algebras over a field in this paper.

The starting point of this work is to look for a simple and effective way to verify the above-mentioned embedding property of the cokernel of the map $\delta$.
In many examples of double centralizer property arising in algebraic Lie theory, $T$ is often a tilting module over a finite dimensional quasi-hereditary algebra or even a standardly stratified algebra. The following theorem, which gives a sufficient condition for the double centralizer property with respect to a tilting module over a finite dimensional standardly stratified algebra, is the first main result of this paper.

\begin{thm}\label{mainthm1} Let $A$ be a finite dimensional standardly stratified algebra in the sense of \cite{CPS}. Let $T\in A\lmod$ be a tilting module. Suppose that there is an integer $r\in\N$ such that for any $\lam\in\Lam^{+}$, there is an embedding $\iota_\lam: \Delta(\lambda)\hookrightarrow T^{\oplus r}$ as well as an epimorphism $\pi_\lam: T^{\oplus r}\twoheadrightarrow\overline{\nabla}(\lambda)$ as $A$-modules, then $T$ is a faithful module over $A$ and $A$ has the double centraliser property with respect to $T$. That is, $$
A=\End_{\End_A(T)}\bigl(T\bigr).
$$
\end{thm}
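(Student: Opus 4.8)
The plan is to verify the hypotheses of Theorem~\ref{ASthm}, namely to produce an injective left $\add(T)$-approximation $0\to A\xrightarrow{\delta}T^{\oplus N}$ whose cokernel embeds into some $T^{\oplus M}$. First I would record the standard facts about standardly stratified algebras: the left regular module ${}_AA$ has a $\Delta$-filtration $0=A_0\subset A_1\subset\cdots\subset A_\ell=A$ with subquotients direct sums of various $\Delta(\lambda)$, and dually $A$ has a (right) costandard filtration giving, for the module-theoretic statement we want, a fixed finite list of $\overline{\nabla}(\mu)$'s appearing as subquotients of an injective cogenerator or, more directly, the fact that $\Hom_A(\Delta(\lambda),\overline{\nabla}(\mu))$ and $\Ext^1_A$ vanish appropriately so that a module with a $\Delta$-filtration maps into a tilting module with controlled cokernel. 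Since $T$ is tilting it has both a $\Delta$-filtration and a $\overline{\nabla}$-filtration, so $\add(T)\subseteq\mathcal{F}(\Delta)\cap\mathcal{F}(\overline{\nabla})$.

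Faithfulness comes first and is easy: each $\Delta(\lambda)$ embeds in $T^{\oplus r}$, and since ${}_AA\in\mathcal{F}(\Delta)$ has all $\Delta(\lambda)$ (for $\lambda\in\Lambda^+$ with the right multiplicities) as the subquotients of a filtration, one assembles these embeddings — using that $\Ext^1_A(\Delta(\lambda),T)=0$ because $T\in\mathcal{F}(\overline{\nabla})$ and $\Ext^{>0}_A(\mathcal{F}(\Delta),\mathcal{F}(\overline{\nabla}))=0$ — to extend an embedding of the bottom $\Delta$-subquotient up the filtration, obtaining an embedding ${}_AA\hookrightarrow T^{\oplus N}$ for suitable $N$. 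A faithful submodule forces $T$ faithful. This map $\delta:A\hookrightarrow T^{\oplus N}$ is automatically a left $\add(T)$-approximation by Remark~\ref{rem1}(1), since $A$ is finite dimensional.

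The real content is showing $\operatorname{coker}(\delta)$ embeds into $T^{\oplus M}$. The key point is that one should build $\delta$ carefully so that $\operatorname{coker}(\delta)\in\mathcal{F}(\Delta)$: since $T\in\mathcal{F}(\Delta)$ and $A\in\mathcal{F}(\Delta)$ and the inclusion is a $\mathcal{F}(\Delta)$-monomorphism (its cokernel's being in $\mathcal{F}(\Delta)$ can be arranged because the $\Ext$-vanishing lets us choose the embedding compatibly with filtrations, i.e. $\Delta(\lambda)\hookrightarrow T^{\oplus r}$ with cokernel in $\mathcal{F}(\Delta)$ — this uses that $T^{\oplus r}/\iota_\lambda(\Delta(\lambda))$ has a $\Delta$-filtration, which follows from $T^{\oplus r}\in\mathcal{F}(\Delta)$, $\Delta(\lambda)\in\mathcal{F}(\Delta)$ and the relevant $\Ext^1$ criterion for $\mathcal{F}(\Delta)$). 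Then it remains to embed an arbitrary $\Delta$-filtered module $X$ into $T^{\oplus M}$. For this I use the epimorphisms $\pi_\mu:T^{\oplus r}\twoheadrightarrow\overline{\nabla}(\mu)$: dualizing the filtration argument, a $\overline{\nabla}$-filtered module is a quotient of a module in $\add(T)$, but I actually need the embedding direction — so instead I apply the first half of the argument to $\operatorname{coker}(\delta)\in\mathcal{F}(\Delta)$ directly, re-running the filtration-climbing construction: embed the bottom standard subquotient $\Delta(\lambda_1)$ into $T^{\oplus r}$ via $\iota_{\lambda_1}$, and extend over each step using $\Ext^1_A(\Delta(\lambda_i),T)=0$, to get $\operatorname{coker}(\delta)\hookrightarrow T^{\oplus M}$. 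The epimorphisms $\pi_\mu$ are what guarantee, via a duality/transpose argument or via the dual statement that $\mathrm{D}(\overline{\nabla}(\mu))$-type modules behave well, that the construction does not degenerate; more precisely, they are used to check $T$ has the correct $\overline{\nabla}$-filtration multiplicities so that $\operatorname{coker}(\delta)$, built from $\Delta$'s, genuinely injects. I expect the main obstacle to be this last embedding step: arranging that the cokernel at each stage stays in $\mathcal{F}(\Delta)$ and that the climbing construction does not just produce a map but an injection, which is where the hypothesis about epimorphisms onto $\overline{\nabla}(\lambda)$ (rather than merely $T\in\mathcal{F}(\overline{\nabla})$) must be doing essential work — likely via passing to a duality on $A$ if one exists, or by a direct diagram chase showing the induced map on the top $\Delta$-layer is injective because $\iota_\lambda$ is.
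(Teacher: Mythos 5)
Your overall architecture is the paper's: verify Theorem~\ref{ASthm} by embedding $A$ into $T^{\oplus N}$ along its $\Delta$-filtration using the maps $\iota_\lam$ and the vanishing $\Ext^1_A(\Delta(\lam),T)=0$ (this is exactly Lemma~\ref{embed}), then show the cokernel lies in $\rf(\Delta)$ and embed it the same way. The faithfulness step and the final embedding of the cokernel are fine. But there is a genuine gap at the crucial middle step. You assert that $\operatorname{coker}(\delta)\in\rf(\Delta)$ ``follows from $T^{\oplus r}\in\rf(\Delta)$, $\Delta(\lam)\in\rf(\Delta)$ and the relevant $\Ext^1$ criterion.'' That is false: $\rf(\Delta)$ is \emph{not} closed under cokernels of monomorphisms between $\Delta$-filtered modules. (Smallest example: for the quasi-hereditary algebra with $P(1)$ of Loewy structure $L(1),L(2),L(1)$ and $\Delta(1)=L(1)$, the inclusion $\Delta(1)\hookrightarrow\soc P(1)\subset P(1)=T(2)$ has cokernel $\nabla(2)\notin\rf(\Delta)$.) By Lemma~\ref{homolog1}(1) and the long exact sequence, $\operatorname{coker}(\delta)\in\rf(\Delta)$ is \emph{equivalent} to the surjectivity of $\Hom_A(T^{\oplus N},\lnab(\mu))\rightarrow\Hom_A(A,\lnab(\mu))\cong\lnab(\mu)$ for every $\mu$, and this depends on the choice of $\delta$, not just on its injectivity.

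This is precisely where the epimorphisms $\pi_\lam$ enter, and you never locate that mechanism (you speculate about ``$\lnab$-filtration multiplicities'' or a duality, neither of which is the point). The paper augments the embedding $\delta_0:A\hookrightarrow T^{\oplus a}$ with extra components $\delta_j^\lam: x\mapsto xu_j$, where $\pi_\lam(u_j)=v_j$ runs over preimages of a $K$-basis of $\lnab(\lam)$. For the resulting $\delta$, every element of $\Hom_A(A,\lnab(\lam))\cong\lnab(\lam)$ visibly lifts through $T^{\oplus r'}$, which simultaneously (i) makes $\delta$ a left $\add(T)$-approximation (via Lemma~\ref{surj}, without appealing to the loosely stated Remark~\ref{rem1}) and (ii) kills $\Ext_A^1(T^{\oplus r'}/A,\lnab(\lam))$, giving $T^{\oplus r'}/A\in\rf(\Delta)$. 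Without this construction your cokernel need not be $\Delta$-filtered and the final embedding step cannot start.
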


Note that any quasi-hereditary algebra over a field is an example of standardly stratified algebras. Our second and the third main results focus on the finite dimensional quasi-hereditary algebra with a simple preserving duality.
The second main result of this paper gives a simple criterion on $T$ for which $A$ has the double centralizer with respect to $T$.

\begin{thm}\label{mainthm2} Let $A$ be a quasi-hereditary algebra with a simple preserving duality $\circ$. Let $T$ be a faithful tilting module in $A\lmod$. Then $A$ has the double centralizer property with respect to $T$. In particular, the $T$-dominant dimension of $A$ is at least two.
\end{thm}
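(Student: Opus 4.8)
The plan is to deduce the theorem from Theorem~\ref{mainthm1}: since every quasi-hereditary algebra is standardly stratified, it is enough to produce, for a single $r\in\N$ that works for all $\lam\in\Lam^{+}$, an embedding $\Delta(\lam)\hookrightarrow T^{\oplus r}$ together with an epimorphism $T^{\oplus r}\twoheadrightarrow\overline{\nabla}(\lam)$. The first observation is that $T$ is self-dual. A simple preserving duality $\circ$ interchanges $P(\lam)\leftrightarrow I(\lam)$ and $\Delta(\lam)\leftrightarrow\nabla(\lam)$, hence carries $\Delta$-filtered modules to $\nabla$-filtered modules and conversely; it therefore sends tilting modules to tilting modules, and as $T(\lam)^{\circ}$ is again indecomposable tilting with highest weight $\lam$ we get $T(\lam)^{\circ}\cong T(\lam)$, whence $T^{\circ}\cong T$. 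Applying $\circ$ to an embedding $\Delta(\lam)\hookrightarrow T^{\oplus r}$ then yields an epimorphism $T^{\oplus r}\cong(T^{\oplus r})^{\circ}\twoheadrightarrow\Delta(\lam)^{\circ}\cong\nabla(\lam)=\overline{\nabla}(\lam)$ (the last equality because $A$ is quasi-hereditary). So it suffices to construct the embeddings $\Delta(\lam)\hookrightarrow T^{\oplus r}$.

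Faithfulness reduces this to projectives. Since $T$ is a faithful module over the finite dimensional algebra $A$, the structure map $A\to\End_{K}(T)$ is injective, and as a left $A$-module $\End_{K}(T)\cong T^{\oplus\dim_{K}T}$; thus $_{A}A$ embeds into $T^{\oplus N}$ with $N=\dim_{K}T$, and hence so does every submodule of every projective $A$-module. It therefore suffices to prove the following statement, which no longer mentions $T$ and is the real content: over a quasi-hereditary algebra with a simple preserving duality, every standard module $\Delta(\lam)$ embeds into a projective module. Granting this, one gets $\Delta(\lam)\hookrightarrow P_{\lam}\hookrightarrow {}_{A}A\hookrightarrow T^{\oplus N}$, so $r=N$ works uniformly in $\lam$; the dual epimorphisms exist by the previous paragraph, and Theorem~\ref{mainthm1} yields both the (already known) faithfulness of $T$ and the double centralizer property $A=\End_{\End_A(T)}(T)$. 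The assertion on the $T$-dominant dimension is then immediate from Theorem~\ref{mainthm1}, equivalently Theorem~\ref{ASthm}.

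For the key lemma I would exploit the duality once more, together with tilting theory. Since $\Delta(\lam)\hookrightarrow T(\lam)$ (the initial term of a $\Delta$-filtration of the indecomposable tilting module $T(\lam)$), it is enough to show that every indecomposable tilting module over a quasi-hereditary algebra with a simple preserving duality is a submodule of a projective module; by $\circ$ this is equivalent to saying that every indecomposable tilting module is a quotient of an injective module. One route is an induction on $|\Lam^{+}|$, removing a maximal weight $\lam_{0}$ to pass to the quotient algebra $\bar A=A/Ae_{\lam_{0}}A$, which is again quasi-hereditary (with poset $\Lam^{+}\setminus\{\lam_{0}\}$) and inherits $\circ$, comparing standard, costandard, projective and tilting modules for $A$ and $\bar A$, and using BGG reciprocity $[I(\mu):\nabla(\nu)]=[P(\mu):\Delta(\nu)]=[\nabla(\nu):L(\mu)]$ to control how the new projective and injective $A$-modules are built from those of $\bar A$. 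A second route goes through Ringel duality: the Ringel equivalence relating $\mathcal{F}(\Delta)$ for $A$ to a category of filtered modules over $R=\End_{A}(T_{\mathrm{ch}})^{\mathrm{op}}$ (itself quasi-hereditary with a simple preserving duality) transports the embedding $\Delta_{A}(\lam)\hookrightarrow T_{A}(\lam)$ to an embedding of a costandard (or standard) $R$-module into an indecomposable projective $R$-module, which one then plays off against the corresponding statement for the iterated Ringel dual.

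The main obstacle is precisely this last step: that standard modules — equivalently, indecomposable tilting modules — over a quasi-hereditary algebra with a simple preserving duality are submodules of projective modules. Everything else (the reduction to Theorem~\ref{mainthm1}, the self-duality of $T$, and the passage from faithfulness to $_{A}A\hookrightarrow T^{\oplus N}$) is formal. I expect the difficulty to lie in the socle/head bookkeeping, because injective modules behave badly under the quotient and truncation constructions used in an induction on the poset, so the argument must be arranged so that the relevant injective (equivalently tilting) modules can be matched up across the reduction; this is where the hypothesis of a \emph{simple preserving} duality, rather than merely a duality or nothing, is used in an essential way.
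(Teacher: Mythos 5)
Your overall architecture --- reduce to Theorem~\ref{mainthm1}, use the self-duality $T^{\circ}\cong T$ to convert embeddings $\Delta(\lam)\hookrightarrow T^{\oplus r}$ into epimorphisms $T^{\oplus r}\twoheadrightarrow\nabla(\lam)=\overline{\nabla}(\lam)$, and use faithfulness to get ${}_{A}A\hookrightarrow T^{\oplus N}$ --- is sound and agrees with how the paper uses these facts. The genuine gap is the step you yourself flag as the main obstacle: the claim that every $\Delta(\lam)$ embeds into a projective $A$-module. You do not prove this, and neither of your sketched routes works as described. The Ringel-duality route is circular: the contravariant equivalence ${\rm R}$ sends the embedding $\Delta_{A}(\lam)\hookrightarrow T_{A}(\lam)$ to the canonical \emph{surjection} of the indecomposable projective over ${R}(A)$ onto the standard module, not to an embedding of a standard module into a projective; to obtain the latter over ${R}(A)$ you would need exactly the unproved dual statement (that $\Delta_{A}(\lam)$ is a quotient of a tilting module with kernel in $\rf(\Delta)$) over $A$. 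The induction route founders on the difficulty you name: $P_{\bar{A}}(\nu)$ is a quotient, not a submodule, of $P_{A}(\nu)$, so embeddings into projectives do not lift along the truncation. Note also that your lemma forces $\soc\Delta(\lam)$ to consist of simples occurring in $\soc A$, a nontrivial constraint that you never derive from the existence of a simple preserving duality; and under your reduction the embedding half of the hypothesis of Theorem~\ref{mainthm1} would hold for an \emph{arbitrary} faithful module, the tilting hypothesis entering only through the surjection half --- a considerably stronger assertion than the theorem, which should make you suspicious.

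The paper supplies precisely the missing ingredient, and it is not the statement that $\Delta(\lam)$ embeds into a projective. Proposition~\ref{keyprop1} shows that if $A\hookrightarrow T$ with $T$ tilting, then $T/A\in\rf(\Delta)$; the proof is a genuine homological argument resting on the Mazorchuk--Ovsienko theorem (Lemma~\ref{MO}) that $\Ext_{A}^{2t}(M,M^{\circ})\neq 0$ when $\dim_{\rf(\Delta)}M=t$, combined with $\Ext_{A}^{2}(T/A,T^{\circ})=0$ and $\Ext_{A}^{1}(T/A,A^{\circ})=0$. From $T/A\in\rf(\Delta)$ one gets surjectivity of $\Hom_{A}(T,\nabla(\lam))\rightarrow\Hom_{A}(A,\nabla(\lam))=\nabla(\lam)$, hence surjections $T^{\oplus m_{\lam}}\twoheadrightarrow\nabla(\lam)$ and, by duality, embeddings $\Delta(\lam)\hookrightarrow T^{\oplus m_{\lam}}$ --- into copies of $T$ itself, not into projectives. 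So the faithful tilting module is used essentially in producing the embeddings, not merely through ${}_{A}A\hookrightarrow T^{\oplus N}$. To repair your proof, replace your key lemma by Proposition~\ref{keyprop1} (or an equivalent argument); the remainder of your reduction then goes through.
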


By \cite[Corollary 2.4]{MS}, there exists a faithful basic tilting module $T\in A\lmod$ such that $A=\End_{\End_A(T)}(T)$. The following theorem is the third main result of this paper, which affirmatively answer a question of  Mazorchuk and Stroppel (see \cite[Remark 2.5]{MS}) on the existence of minimal basic tilting module $T$ for which $A$ has the double centralizer property.

\begin{thm}\label{mainthm3} Let $A$ be a quasi-hereditary algebra with a simple preserving duality. Then there exists a unique faithful basic tilting module $T\in A\lmod$ such that \begin{enumerate}
\item $A=\End_{\End_A(T)}(T)$; and
\item if $T'$ is another faithful tilting module satisfying $A=\End_{\End_A(T')}(T')$, then $T$ must be a direct summand of $T'$.
\end{enumerate}
\end{thm}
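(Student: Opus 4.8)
By Theorem~\ref{mainthm2} every faithful tilting $A$-module already satisfies (1), so the real content of the theorem is that among the faithful basic tilting modules there is one which divides all the others (and that it is unique). The plan is to produce it explicitly: let $T$ be the direct sum of all pairwise non-isomorphic indecomposable $A$-modules that are simultaneously projective and injective. Two observations make this the natural candidate. First, $T$ is a basic tilting module: over a quasi-hereditary algebra a projective module lies in $\mathcal{F}(\Delta)$ and an injective one lies in $\mathcal{F}(\overline{\nabla})$, so a projective–injective indecomposable lies in $\mathcal{F}(\Delta)\cap\mathcal{F}(\overline{\nabla})$ and is therefore some $T(\lambda)$, while conversely each such $T(\lambda)$ is projective–injective; here, because $T(\lambda)^{\circ}\cong T(\lambda)$, "projective" and "injective" are interchangeable. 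Second, any faithful tilting module $M$ must contain $T$ as a direct summand: if $M$ is faithful then ${}_AA\hookrightarrow M^{\oplus N}$ for some $N$, hence each indecomposable projective $P(\lambda)$ embeds in $M^{\oplus N}$; when $P(\lambda)$ is also injective this monomorphism splits, so $P(\lambda)\mid M^{\oplus N}$ and thus $P(\lambda)\mid M$ by Krull--Schmidt. Hence every projective–injective indecomposable is a summand of $M$, i.e. $T\mid M$.

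The real work is to show that $T$ is itself \emph{faithful}. Since ${}_AA$ is faithful, it suffices to embed ${}_AA$, equivalently each $P(\lambda)$, into a direct sum of projective–injective modules. The injective hull of $P(\lambda)$ is $\bigoplus_{\mu}I(\mu)^{\,[\soc P(\lambda):L(\mu)]}$, and any injective module receiving $P(\lambda)$ contains this hull as a summand, so the task reduces to: whenever $L(\mu)\subseteq\soc P(\lambda)$, the indecomposable injective $I(\mu)$ is projective. As $P(\lambda)$ has a $\Delta$-filtration, the simple constituents of $\soc P(\lambda)$ are among those of the socles of the standard modules occurring in it, so it is enough to treat the case $L(\mu)\subseteq\soc\Delta(\nu)$; applying the simple-preserving duality $\circ$, and using $\circ\Delta(\nu)\cong\overline{\nabla}(\nu)$ and $\circ I(\mu)\cong P(\mu)$, this is precisely the assertion that $L(\mu)\in\head\overline{\nabla}(\nu)$ forces $P(\mu)$ to be injective. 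This is the one genuinely structural ingredient and, I expect, the main obstacle. I would prove it by analysing the $\overline{\nabla}$-filtration of the injectives together with BGG reciprocity and the duality; in any case the statement is equivalent to the classical fact that over a quasi-hereditary algebra with a simple-preserving duality every indecomposable projective embeds into a projective–injective module (closely related to work of Irving on projective–injective modules in category $\mathcal{O}$), which I would invoke if a short self-contained argument is not at hand.

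Granting that $T$ is faithful, the theorem follows at once. Being a faithful basic tilting module, $T$ satisfies $A=\End_{\End_A(T)}(T)$ by Theorem~\ref{mainthm2}, which is (1); and (2) is exactly the divisibility $T\mid M$ established in the first paragraph, applied to an arbitrary faithful tilting module $M=T'$. For uniqueness, suppose $\widetilde T$ is a faithful basic tilting module with properties (1) and (2). Since $\widetilde T$ is faithful, $T\mid\widetilde T$ by the first paragraph; since $T$ is itself a faithful tilting module satisfying (1), property (2) for $\widetilde T$ gives $\widetilde T\mid T$. As $T$ and $\widetilde T$ are both basic, Krull--Schmidt forces $\widetilde T\cong T$.
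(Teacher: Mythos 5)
Your two preliminary observations are correct: every projective--injective indecomposable is an indecomposable tilting module, and every faithful tilting module contains each projective--injective indecomposable as a direct summand; the closing uniqueness argument via Krull--Schmidt is also fine. The fatal problem is exactly the step you flag as the main obstacle. The claim that over a quasi-hereditary algebra with a simple preserving duality every indecomposable projective embeds into a projective--injective module (equivalently, that your candidate $T$ is faithful) is a special feature of blocks of category $\mathcal{O}$ (Irving's theorem) and is false in the generality of the theorem; it cannot be patched, because the set of projective--injective indecomposables can even be empty. Concretely, let $Q$ be the quiver with vertices $1,2,3$ and arrows $\alpha\colon 3\to 1$, $\alpha^{*}\colon 1\to 3$, $\beta\colon 3\to 2$, $\beta^{*}\colon 2\to 3$, and let $A=KQ/I$, where $I$ is generated by the two paths $3\to 1\to 3$ and $3\to 2\to 3$ (all paths of length three then vanish automatically). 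Taking $3$ maximal and $1,2$ minimal, one checks $\Delta(1)=L(1)$, $\Delta(2)=L(2)$, $\Delta(3)=P(3)$ with $\rad\Delta(3)\cong L(1)\oplus L(2)$, and $\rad P(1)\cong\rad P(2)\cong P(3)$, so $A$ is quasi-hereditary; the arrow-reversing anti-involution fixing the vertices yields a simple preserving duality. Every indecomposable projective of this algebra has socle $L(1)\oplus L(2)$, hence none is injective, so your $T$ is the zero module, while the indecomposable tiltings are $L(1)$, $L(2)$ and a five-dimensional self-dual $T(3)$, none of them projective--injective. Your reduction fails at precisely the predicted spot: $L(1)\subseteq\soc\Delta(3)$ but $I(1)$ is not projective.

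For comparison, the paper constructs the minimal module through Ringel duality rather than through projective--injectives: $T=\bigl({\rm R}^{-1}(\widetilde{P})\bigr)_{\rm basic}$, where $\widetilde{P}$ is the projective cover, over the Ringel dual $R(A)=\End_A(\widetilde{T})$, of ${\rm R}({}_AA)\cong\widetilde{T}$. Faithfulness then comes for free by applying ${\rm R}^{-1}$ to the exact sequence $0\to\Ker\phi\to\widetilde{P}^{\oplus r'}\to\widetilde{T}\to 0$ in $\rf(\Delta^{R(A)})$, and minimality follows because for any faithful tilting module $T'$ the sequence $0\to A\to (T')^{\oplus s}\to (T')^{\oplus s}/A\to 0$ Ringel-dualizes to a projective presentation of $\widetilde{T}$, forcing $\widetilde{P}$ to be a direct summand of ${\rm R}(T')^{\oplus s}$. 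Your projective--injective summands do all occur inside this $T$, but in general they form a proper (possibly zero) part of it, so the construction itself, not just its justification, has to change.
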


The fourth main result of this paper deals with a concrete situation of Brauer-Schur-Weyl duality related to the space of dual partially harmonic tensors. We refer the readers to Section 4 for unexplained notations below.

\begin{thm}\label{mainthm4} Suppose that $\cha K>\min\{n-f+2m,n\}$. Then there is an exact sequence of $S_K^{sy}(m,n)$-module homomorphisms: \begin{equation}\label{fprime}
0\rightarrow  S_{f,K}^{sy}(m,n)\overset{\delta_{f,K}}{\rightarrow} (V_K^{\otimes n}/V_K^{\otimes n}\bb_{n,K}^{(f)})^{\oplus r}\overset{{\eps}_{f,K}}{\rightarrow}(V_K^{\otimes n}/V_K^{\otimes n}\bb_{n,K}^{(f)})^{\oplus s} ,
\end{equation}
such that the map $\delta_{f,K}$ is a left $\add(V_K^{\otimes n}/V_K^{\otimes n}\bb_{n,K}^{(f)})$-approximation of $S_{f,K}^{sy}(m,n)$. In particular, the natural map $S^{sy}_K(m,n)\rightarrow\End_{\bb_{n,K}/\bb_{n,K}^{(f)}}(V_K^{\otimes n}/V_K^{\otimes n}\bb_{n,K}^{(f)})$ is surjective.
\end{thm}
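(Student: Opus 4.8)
The plan is to read \eqref{fprime} as a special case of Theorem~\ref{mainthm1} --- or, more conveniently, of Theorem~\ref{mainthm2} --- applied to the algebra $A:=S_{f,K}^{sy}(m,n)$ and the module $T:=V_K^{\otimes n}/V_K^{\otimes n}\bb_{n,K}^{(f)}$. First I would collect from Section~4 the structural facts on which everything rests: under the hypothesis $\cha K>\min\{n-f+m,n\}$, the algebra $A$ is a finite dimensional quasi-hereditary $K$-algebra whose standard modules are the symplectic Weyl modules $\Delta(\lambda)$ for the relevant set of partitions, it carries a simple preserving duality (coming from the contravariant duality on $Sp(V_K)$-modules, equivalently from the natural anti-involution of the symplectic Schur algebra), and the left $S_K^{sy}(m,n)$-action on $T$ factors through a surjection $S_K^{sy}(m,n)\twoheadrightarrow A$ which identifies $A$ with the image of $S_K^{sy}(m,n)$ in $\End_K(T)$; in particular $T$ is a \emph{faithful} $A$-module. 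Finally, the right $\bb_{n,K}$-action on $V_K^{\otimes n}$ commutes with the left $S_K^{sy}(m,n)$-action, descends to a right action of $\bb_{n,K}/\bb_{n,K}^{(f)}$ on $T$, and $\End_{S_K^{sy}(m,n)}(T)$ coincides with the image of $\bb_{n,K}/\bb_{n,K}^{(f)}$ in $\End_K(T)$ (the ``easy'' direction of the duality).

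The heart of the matter is to check that $T$ is a \emph{tilting} $A$-module, i.e.\ that $V_K^{\otimes n}/V_K^{\otimes n}\bb_{n,K}^{(f)}$ admits both a $\Delta$-filtration and a $\overline{\nabla}$-filtration as an $A$-module. For this I would argue that the natural module $V_K$ of $Sp(V_K)$ is simple and self-dual, hence tilting, so that $V_K^{\otimes n}$ is a tilting module over $S_K^{sy}(m,n)$ in all characteristics; that the submodule $V_K^{\otimes n}\bb_{n,K}^{(f)}$ is identified, through the contraction elements $e_1,e_3,\dots,e_{2f-1}$, with a module assembled from $V_K^{\otimes(n-2f)}$ and therefore again carries good filtrations; and that the characteristic bound $\cha K>\min\{n-f+m,n\}$ forces the relevant $\Ext^1$-groups to vanish, so that the short exact sequence $0\to V_K^{\otimes n}\bb_{n,K}^{(f)}\to V_K^{\otimes n}\to T\to 0$ is compatible with the $\Delta$- and $\overline{\nabla}$-filtrations and $T$ inherits both. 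Thus $T$ is a faithful tilting module over the quasi-hereditary algebra $A$ with simple preserving duality. (Equivalently one can verify the hypothesis of Theorem~\ref{mainthm1} directly --- that for a suitable $r$ every $\Delta(\lambda)$ embeds into $T^{\oplus r}$ and every $\overline{\nabla}(\lambda)$ is a quotient of $T^{\oplus r}$ --- which, for a quasi-hereditary algebra with simple preserving duality, is precisely the content of ``$T$ faithful tilting'', and is how Theorem~\ref{mainthm2} follows from Theorem~\ref{mainthm1}.)

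Granting this, Theorem~\ref{mainthm2} applies at once: $A$ has the double centralizer property with respect to $T$, so $A=\End_{\End_A(T)}(T)$ and the $T$-dominant dimension of $A$ is at least two. By Theorem~\ref{ASthm} there is then an injective left $\add(T)$-approximation $0\to A\to T^{\oplus r}$ that continues to an exact sequence $0\to A\to T^{\oplus r}\to T^{\oplus s}$; rewriting $A=S_{f,K}^{sy}(m,n)$, $T=V_K^{\otimes n}/V_K^{\otimes n}\bb_{n,K}^{(f)}$ and naming the two maps $\delta_{f,K},\eps_{f,K}$ produces precisely the exact sequence \eqref{fprime} with $\delta_{f,K}$ a left $\add(T)$-approximation. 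For the concluding surjectivity statement, note that the $A$-action and the $S_K^{sy}(m,n)$-action on $T$ have the same image, so $\End_A(T)=\End_{S_K^{sy}(m,n)}(T)$, which by the last structural fact equals the image of $\bb_{n,K}/\bb_{n,K}^{(f)}$; hence $\End_{\bb_{n,K}/\bb_{n,K}^{(f)}}(T)=\End_{\End_A(T)}(T)=A$, and since $A$ is the image of $S_K^{sy}(m,n)$ the natural map $S_K^{sy}(m,n)\to\End_{\bb_{n,K}/\bb_{n,K}^{(f)}}(T)$ is surjective.

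The main obstacle is the tilting assertion in the second paragraph: determining the $A$-module structure of $V_K^{\otimes n}\bb_{n,K}^{(f)}$ precisely enough to see that $T$ has compatible standard and costandard filtrations, and isolating the cohomological vanishing responsible for this. It is exactly here that the bound $\cha K>\min\{n-f+m,n\}$ enters (and, as in the classical symplectic Schur-Weyl duality, some such restriction on the characteristic cannot be avoided). Once this is secured, the remainder of the proof is a formal invocation of Theorems~\ref{mainthm2} and~\ref{ASthm}.
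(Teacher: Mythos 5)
Your overall strategy --- apply Theorem \ref{mainthm2} to $A:=S_{f,K}^{sy}(m,n)$ and $T:=V_K^{\otimes n}/V_K^{\otimes n}\bb_{n,K}^{(f)}$ --- is genuinely different from the paper's, which never claims that $S_{f,K}^{sy}(m,n)$ is quasi-hereditary or that $T$ is a tilting module over it. Instead the paper takes the known coresolution $0\to S_K^{sy}(m,n)\overset{\delta_K}{\rightarrow}(V_K^{\otimes n})^{\oplus r}\overset{\eps_K}{\rightarrow}(V_K^{\otimes n})^{\oplus s}$ of Corollary \ref{sym1} and shows that it descends modulo $V_K^{\otimes n}\bb_{n,K}^{(f)}$: injectivity and the approximation property of $\delta_{f,K}$ are elementary (Lemma \ref{approx}), and exactness at the middle term is extracted from the original sequence using the direct sum decomposition $V_K^{\otimes n}=V_K^{\otimes n}\bb_{n,K}^{(f)}\oplus\bigl(V_K^{\otimes n}/V_K^{\otimes n}\bb_{n,K}^{(f)}\bigr)^{\ast}$ of Lemma \ref{keylem4} together with the vanishing of $\Hom$ between the two summands. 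That decomposition is where the hypothesis $\cha K>\min\{n-f+m,n\}$ enters, and it is proved by a linkage-principle computation (Lemma \ref{order} plus an explicit analysis of the affine reflections $s_{\beta,np}$) showing that no weight of $\Lam_f^+$ is linked to a weight of $\Lam_f^c$.

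The gap in your write-up is exactly the step you flag as ``the main obstacle'': you never prove that $T$ is a tilting $A$-module, nor that $A$ is quasi-hereditary with a simple preserving duality, and neither fact is actually available ``from Section 4'' --- the paper deliberately avoids needing them. Both assertions reduce to the same block-separation statement, namely Lemma \ref{keylem4}: granted it, $V_K^{\otimes n}\bb_{n,K}^{(f)}$ and $T^{\ast}$ lie in disjoint sets of blocks of $S_K^{sy}(m,n)$, so $\Ker\pi_{f,K}$ is cut out by a central idempotent, $A$ becomes a direct algebra factor of $S_K^{sy}(m,n)$ (hence quasi-hereditary with the inherited duality), and $T\cong T^{\ast}$ is self-dual with a good filtration, hence tilting. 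Your proposed justification (that $V_K^{\otimes n}\bb_{n,K}^{(f)}$ is ``assembled from $V_K^{\otimes(n-2f)}$'' and that the characteristic bound ``forces the relevant $\Ext^1$-groups to vanish'') gestures at this but supplies no argument, and in particular never explains how the specific bound $\cha K>\min\{n-f+m,n\}$ is used; without the linkage computation the proof is incomplete at its crucial point. If you supply Lemma \ref{keylem4} (or an equivalent), your route does close and is conceptually clean, at the cost of having to verify the quasi-heredity of the quotient algebra, which the paper's descent argument sidesteps entirely.
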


The content of the paper is organised as follows. In Section 2, we first recall the notions of standardly stratified algebras and their basic properties and then give the first main result Theorem \ref{mainthm1} of this paper. In Section 3, we shall focus on the quasi-hereditary algebra with a simple preserving duality. Proposition
\ref{keyprop1} is a key step in the proof of the second main result (Theorem \ref{mainthm2}) of this paper. The proof of Proposition \ref{keyprop1} makes use of a homological result \cite[Corollary 6]{MO} of Mazorchuk and Ovsienko for properly stratified algebras. The proof of the third main result Theorem \ref{mainthm3} is also given in this section. As a remarkable consequence of Theorem \ref{mainthm3}, we obtained in Corollary \ref{remcor} that the existence of a unique minimal faithful basic tilting module $T\in A\lmod$ such that any other faithful tilting module $T'\in A\lmod$ must have $T$ as a direct summand. In Section 4, we use the tool of dominant dimension to study the Schur-Weyl duality between the symplectic Schur algebra $S^{sy}(m,n)$ and $\bb_{n}/\mathfrak{B}_{n}^{(f)}$ on the space $V^{\otimes n}/V^{\otimes n}\mathfrak{B}_{n}^{(f)}$of dual partially harmonic tensors, where $V$ is a $2m$-dimensional symplectic space over $K$, and $\mathfrak{B}_{n}^{(f)}$ is the two-sided ideal of the Brauer algebra $\bb_{n}(-2m)$ generated by $e_1e_3\cdots e_{2f-1}$ with $1\leq f\leq [\frac{n}{2}]$. The aim is to prove  the surjectivity of the natural map from $S^{sy}(m,n)$ to the endomorphism algebra of the space $V^{\otimes n}/V^{\otimes n}\mathfrak{B}_{n}^{(f)}$ as a $\bb_n$-module. The fourth main result Theorem \ref{mainthm4} of this paper proves this surjectivity under the assumption $\cha K>\min\{n-f+2m,n\}$. Another surjection from $\bb_n/\bb_n^{(f)}$ to the endomorphism algebra of the space $V^{\otimes n}/V^{\otimes n}\mathfrak{B}_{n}^{(f)}$ as a $KSp(V)$-module is established in an earlier work \cite{Hu3} by the first author of this paper.


\bigskip
\centerline{Acknowledgements}
\bigskip

The first author was support by the National Natural Science Foundation of China. The second author is supported by the NSF of Fujian Province (Grant No. 2018J01002) and the National NSF of China (Grant No. 11871107). Both authors wish to thank the referee for his/her substantial and insightful comments which significantly improves the final presentation of this article.
\bigskip

\bigskip
\section{Standardly stratified algebras and their tilting modules}

The purpose of this section is to gives a sufficient condition for the double centralizer property with respect to a tilting module over a finite dimensional standardly stratified algebra.

Let $K$ be a field and $A$ be a finite dimensional $K$-algebra with identity element. Let $\{L(\lam)|\lam\in\Lam^{+}\}$ be a complete set of representatives of isomorphic classes of simple modules in $A\lmod$. We always assume that $A$ is split over $K$ in the sense that $\End_A(L(\lam))=K$ for any $\lam\in\Lam^{+}$.
For each $\lam\in\Lam^{+}$, let $P(\lam)\in A\lmod$ be the projective cover of $L(\lam)$ and $I(\lam)\in A\lmod$ the injective hull of $L(\lam)$. For any $M,N\in A\lmod$, we define the trace $\Tr_M(N)$ of $M$ in $N$ as the sum of the images of all $A$-homomorphisms from $M$ to $N$.

Let $A$ be a finite dimensional standardly stratified algebra\footnote{Another slightly different class of standardly stratified algebras was introduced and studied in \cite{AHLU1,ADL} under the same name.} in the sense of \cite{CPS}. That means, there is a partial \textit{preorder} ``$\preceq$" on $\Lam^{+}$, and if set (for any $\lam\in\Lam^{+}$) $$
P^{\succ\lam}:=\oplus_{\mu\succ\lam}P(\mu),\quad P^{\succeq\lam}:=\oplus_{\mu\succeq\lam}P(\mu),\quad \Delta(\lam):=P(\lam)/\Tr_{P^{\succ\lam}}(P(\lam)),
$$
then \begin{enumerate}
\item the kernel of the canonical surjection $P(\lam)\twoheadrightarrow \Delta(\lam)$ has a filtration with subquotients $\Delta(\mu)$, where $\mu\succ\lam$; and
\item the kernel of the canonical surjection $\Delta(\lam)\twoheadrightarrow L(\lam)$ has a filtration with subquotients $L(\mu)$, where $\mu\preceq\lam$.
\end{enumerate}
We call $\Delta(\lam)$ the {\textbf{standard module}} corresponding to $\lam$. Note that $\Delta(\lam)$ is the maximal quotient of $P(\lam)$ such that $[\Delta(\lam):L(\mu)]=0$ for all $\mu\succ\lam$. In particular, $\head\Delta(\lam)\cong L(\lam)$. We define the \textbf{proper standard module} $\ldel(\lam)$ to be $$
\ldel(\lam):=P(\lam)/\Tr_{P^{\succeq\lam}}(\rad P(\lam)),
$$
which is the maximal quotient of $P(\lam)$ satisfying $[\rad\ldel(\lam):L(\mu)]=0$ for all $\mu\succeq\lam$. It is clear that there is a natural surjection
$\beta_\lam: \Delta(\lam)\twoheadrightarrow\ldel(\lam)$.

Similarly,  let $I^{\succ\lam}:=\oplus_{\mu\succ\lam}I(\mu)$, $I^{\succeq\lam}:=\oplus_{\mu\succeq\lam}I(\mu)$, we define the {\textbf{proper costandard module}} $\lnab(\lam)$ to be the preimage of $$
\bigcap_{f: I(\lam)/\soc I(\lam)\rightarrow I^{\succeq\lam}}\Ker f
$$
under the canonical epimorphism $I(\lam)\twoheadrightarrow I(\lam)/\soc I(\lam)$. Then $\lnab(\lam)$ is the maximal submodule of $I(\lam)$ satisfying
$[\lnab(\lam)/\soc\lnab(\lam):L(\mu)]=0$ for all $\mu\succeq\lam$. Note that $\soc\lnab(\lam)=\soc I(\lam)\cong L(\lam)$. We define the \textbf{costandard module} $\nabla(\lam)$ to be $$
\nabla(\lam):=\bigcap_{f: I(\lam)\rightarrow I^{\succ\lam}}\Ker f,
$$
which is the maximal submodule of $I(\lam)$ such that $[\nabla(\lam):L(\mu)]=0$ for all $\mu\succ\lam$. In particular, there is a natural embedding $\alpha_\lam: \lnab(\lam)\hookrightarrow\nabla(\lam)$.

We use $\rf(\Delta)$ (resp., $\rf(\lnab)$) to denote the full subcategory of $A\lmod$ given by all $A$-modules having a filtration with all subquotients of the filtration being isomorphic to $\Delta(\lam)$ (resp., $\lnab(\lam)$) for some $\lam\in\Lam^{+}$.
In \cite{F}, Frisk developed the theory of tilting module for standardly stratified algebra. Recall that by a tilting module we mean an object in $\rf(\Delta)\cap\rf(\lnab)$. Let $\{T(\lam)|\lam\in\Lam^+\}$ be a complete set of pairwise non-isomorphic indecomposable tilting modules in $A\lmod$.

\begin{lem}\text{(\cite[Lemma 6, Theorems 9,13]{F})}\label{homolog1} Let $\lam,\mu\in\Lam^{+}$ and $i\in\N$. Let $M\in A\lmod$. Then \begin{enumerate}
\item $M\in\rf(\Delta)$ if and only if $\Ext_A^1(M,\lnab(\nu))=0$ for any $\nu\in\Lam^{+}$;
\item $M\in\rf(\lnab)$ if and only if $\Ext_A^1(\Delta(\nu),M)=0$ for any $\nu\in\Lam^{+}$;
\item we have $$
\Ext_A^i(\Delta(\lam),\lnab(\mu))=\delta_{i0}\delta_{\lam,\mu}K ;
$$
\item $A\in\rf(\Delta)$.
\end{enumerate}
\end{lem}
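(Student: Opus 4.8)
The plan is to read off (4) directly from the definition, to establish (3) as the technical core, and to derive (1) and (2) from (3) by long-exact-sequence bookkeeping. Statement (4) is immediate: by hypothesis the kernel of $P(\lam)\twoheadrightarrow\Delta(\lam)$ carries a $\Delta$-filtration, so $P(\lam)\in\rf(\Delta)$ for every $\lam\in\Lam^{+}$, and $A$ (as a left module over itself) is a finite direct sum of the $P(\lam)$'s, hence lies in $\rf(\Delta)$.

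For the case $i=0$ of (3) I would argue on composition factors, using two facts read off from the definition: every composition factor $L(\nu)$ of $\Delta(\lam)$ has $\nu\preceq\lam$ (the head is $L(\lam)$ and the radical is $L$-filtered with factors $L(\mu)$, $\mu\preceq\lam$), while $\lnab(\mu)$ has simple socle $L(\mu)$ and the composition factors of $\lnab(\mu)/\soc\lnab(\mu)$ have labels $\not\succeq\mu$. Given a nonzero $\varphi\colon\Delta(\lam)\to\lnab(\mu)$ with image $I$, one has $\soc I\cong L(\mu)$ and $\head I\cong L(\lam)$. If $I$ had length $\geq2$, then $L(\lam)=\head I$ would be a composition factor of $I/\soc I\subseteq\lnab(\mu)/\soc\lnab(\mu)$, forcing $\lam\not\succeq\mu$, whereas $\mu$ (being the label of $\soc I$) is a composition factor of $I$, hence of $\Delta(\lam)$, so $\mu\preceq\lam$ --- a contradiction. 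Thus $I$ is simple, $I\cong L(\lam)=L(\mu)$, $\lam=\mu$, and every such $\varphi$ is a scalar multiple of $\Delta(\lam)\twoheadrightarrow L(\lam)\hookrightarrow\lnab(\lam)$, so $\Hom_A(\Delta(\lam),\lnab(\mu))=\delta_{\lam\mu}K$.

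For the case $i\geq1$ of (3) I would induct on the length of the layer filtration of $A$ attached to the preorder, combined with dimension shifting. For $\lam$ in the maximal layer $\Delta(\lam)=P(\lam)$ is projective, so $\Ext_A^i(\Delta(\lam),-)=0$ for $i\geq1$. For general $\lam$ one uses the short exact sequence $0\to Z\to P(\lam)\to\Delta(\lam)\to0$ with $Z$ filtered by $\Delta(\nu)$, $\nu\succ\lam$, to lower the degree; for general $\mu$ one uses $0\to\lnab(\mu)\to I(\mu)\to I(\mu)/\lnab(\mu)\to0$ together with the fact, visible from the construction of $\lnab(\mu)$, that $I(\mu)/\lnab(\mu)$ embeds into a direct sum of injectives $I(\nu)$ with $\nu\succeq\mu$; since $\Ext_A^{j}(\Delta(\lam),I(\nu))=0$ for $j\geq1$, this reduces $\Ext^i_A(\Delta(\lam),\lnab(\mu))$ either to a strictly smaller cohomological degree (and, in degree one, to a direct comparison of $\Hom$-spaces) or to an $\Ext$-group over the residue algebra $\bar A$ that kills the maximal layer --- again standardly stratified, with standard and proper costandard modules for the remaining layers inherited from $A$. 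The case $i=0$ starts the induction and the finiteness of the layer filtration terminates it.

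Granting (3), statements (1) and (2) follow. The ``only if'' directions are formal: if $M\in\rf(\Delta)$ then induction on the $\Delta$-filtration length together with the long exact $\Ext$-sequence and $\Ext^1_A(\Delta(\lam),\lnab(\nu))=0$ gives $\Ext^1_A(M,\lnab(\nu))=0$ for all $\nu$, and (2) is the mirror image. For the ``if'' direction of (1), assuming $\Ext^1_A(M,\lnab(\nu))=0$ for all $\nu$, I would induct on $\dim_K M$: pick a $\preceq$-maximal label $\lam$ occurring in $M$, use the vanishing against $\lnab(\lam)$ to split a copy of $\Delta(\lam)$ off a suitable subquotient of $M$ so that what remains again satisfies the hypothesis, and iterate; the ``if'' direction of (2) is dual. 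The genuinely delicate points --- and the place I expect the real work --- are exactly where $\preceq$ being only a preorder intervenes: $\Delta(\lam)$ may have composition factors $L(\mu)$ with $\mu\sim\lam$, $\mu\neq\lam$, so both the ``split off a $\Delta$'' step above and the comparison of $\Ext$-groups over $A$ and over $\bar A$ in the layer induction are not purely formal; making them precise is what Frisk's Lemma~6 and Theorems~9 and~13 accomplish, and we are quoting those.
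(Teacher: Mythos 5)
Your proposal is correct, and it is worth noting that the paper's own ``proof'' is essentially a citation: parts (1)--(3) are quoted wholesale from Frisk, and the only thing actually argued is (4), which the paper deduces from the ``if'' direction of (1) together with the observation that $A$ is projective, hence $\Ext_A^1(A,\lnab(\nu))=0$ for every $\nu$. You take a genuinely different (and more elementary) route to (4): you read it directly off the definition of a standardly stratified algebra, since the kernel of $P(\lam)\twoheadrightarrow\Delta(\lam)$ is $\Delta$-filtered by hypothesis, so each $P(\lam)$ and hence $A$ lies in $\rf(\Delta)$. This avoids consuming part (1) and is arguably preferable, though both arguments are valid. For (1)--(3) you go further than the paper by sketching how Frisk's proofs run: your socle/head analysis of the image of a map $\Delta(\lam)\to\lnab(\mu)$ is complete and correct as written (the only point to make explicit is that if the image has length at least two then its simple socle lies in its radical, so the head really does appear in the quotient by the socle), and your layer-induction/dimension-shifting outline for $i\geq1$ and for the filtration criteria is the standard one. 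You correctly flag that the delicate steps caused by $\preceq$ being only a preorder are precisely what the cited results of Frisk supply; since the lemma is stated as a quotation of those results, deferring to them there is consistent with what the paper itself does.
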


\begin{proof} (1), (2) and (3) all follow from \cite[Lemma 6, Theorems 9,13]{F}, while (4) follows from (1) because $A$ is projective and hence $\Ext_A^1(A,N)=0$ for any $N\in A\lmod$.
\end{proof}

\begin{lem}\label{embed} Let $0\rightarrow M_1\overset{\iota}{\rightarrow} M\overset{\pi}{\rightarrow} M_2\rightarrow 0$ be a short exact sequence in $A\lmod$. Let $T\in A\lmod$ such that the induced natural map $\iota_*: \Hom_{A}(M,T)\rightarrow\Hom_A(M_1,T)$ is surjective and there are embeddings $\iota_1: M_1\hookrightarrow T^{\oplus a},\,\,\iota_2: M_2\hookrightarrow T^{\oplus b}$ as $A$-modules for some $a,b\in\N$. Then there is an embedding $\iota_0: M\hookrightarrow T^{\oplus a+b}$ as $A$-modules.
\end{lem}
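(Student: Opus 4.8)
The plan is to build the embedding $\iota_0$ as the direct sum of two maps: an extension of $\iota_1$ from $M_1$ to all of $M$, together with the composite $\iota_2\circ\pi$. The surjectivity of $\iota_*$ is exactly what allows the first of these to exist, while the second is available for free from the given data.

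First I would produce a map $g\colon M\to T^{\oplus a}$ with $g\circ\iota=\iota_1$. Write $\iota_1=(\iota_1^{(1)},\dots,\iota_1^{(a)})$, where $\iota_1^{(j)}\colon M_1\to T$ is the composite of $\iota_1$ with the projection onto the $j$-th summand of $T^{\oplus a}$. Since $\iota_*\colon\Hom_A(M,T)\to\Hom_A(M_1,T)$ is surjective, for each $j$ there is some $g^{(j)}\colon M\to T$ with $g^{(j)}\circ\iota=\iota_1^{(j)}$. Collecting these, set $g:=(g^{(1)},\dots,g^{(a)})\colon M\to T^{\oplus a}$, so that $g\circ\iota=\iota_1$. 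Next put $h:=\iota_2\circ\pi\colon M\to T^{\oplus b}$ and define
$$\iota_0:=(g,h)\colon M\longrightarrow T^{\oplus a}\oplus T^{\oplus b}=T^{\oplus a+b}.$$

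It remains to check that $\iota_0$ is injective, which is a short diagram chase. Suppose $m\in\Ker\iota_0$. Then $g(m)=0$ and $\iota_2(\pi(m))=h(m)=0$; since $\iota_2$ is injective, $\pi(m)=0$, so $m=\iota(m_1)$ for some $m_1\in M_1$ by exactness. But then $0=g(m)=g(\iota(m_1))=\iota_1(m_1)$, and the injectivity of $\iota_1$ forces $m_1=0$, hence $m=0$. Thus $\iota_0$ is the desired embedding $M\hookrightarrow T^{\oplus a+b}$.

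The argument is entirely elementary, and there is no real obstacle. The only point worth a moment's care is that the approximation-type hypothesis on $\iota_*$ is stated for a single copy of $T$, so the extension of $\iota_1$ to $M$ must be carried out coordinate by coordinate rather than in one stroke; equivalently, one uses that $\Hom_A(-,T^{\oplus a})$ takes $\iota$ to a surjection whenever $\Hom_A(-,T)$ does. No hypothesis concerning maps into $M_2$ is needed, because $\pi$ is part of the given short exact sequence; the surjectivity of $\iota_*$ is invoked precisely, and only, to extend the embedding of the sub-object $M_1$ over $M$.
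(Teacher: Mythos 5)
Your proposal is correct and follows essentially the same argument as the paper: lift $\iota_1$ along $\iota$ using the surjectivity of $\iota_*$ (extended componentwise to $T^{\oplus a}$), pair the lift with $\iota_2\circ\pi$, and verify injectivity by the same short diagram chase. No discrepancies.
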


\begin{proof} By assumption, it is clear that the induced natural map $\iota_{a,*}: \Hom_{A}(M,T^{\oplus a})\rightarrow\Hom_A(M_1,T^{\oplus a})$ is surjective too. Hence there exists $\hat{\iota}_1\in\Hom_A(M,T^{\oplus a})$ such that $\iota_{a,*}(\hat{\iota}_1)=\iota_1$. In other words, \begin{equation}\label{lift1}
\hat{\iota}_1(\iota(y))=\iota_1(y),\,\,\,\forall\,y\in M_1 .
\end{equation}
We now define a map $\iota_0: M\rightarrow T^{\oplus a+b}=T^{\oplus a}\oplus T^{\oplus b}$ as follows: $$
x\mapsto \bigl(\hat{\iota}_1(x),\iota_2(\pi(x))\bigr),\quad\,\forall\,x\in M .
$$
It is easy to check that $\iota_0$ is an injective homomorphism in $A\lmod$. This completes the proof of the lemma.
\end{proof}

\begin{lem}\label{surj} Let $i: N_1\hookrightarrow N_2$ be an embedding in $A\lmod$. Let $0\rightarrow M'\overset{\iota}{\rightarrow} M\overset{\pi}{\rightarrow} M''\rightarrow 0$ be a short exact sequence in $A\lmod$. Suppose that the natural map
$\Hom_{A}(N_2,M)\rightarrow\Hom_A(N_2,M'')$ and the following natural maps $$
\iota'_1: \Hom_{A}(N_2,M')\rightarrow\Hom_A(N_1,M')\quad\,\,\iota'_2: \Hom_{A}(N_2,M'')\rightarrow\Hom_A(N_1,M''),
$$
are all surjective. Then the natural map $\iota'_0: \Hom_{A}(N_2,M)\rightarrow\Hom_A(N_1,M)$ is surjective too.
\end{lem}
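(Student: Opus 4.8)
The plan is to obtain this from the standard ``four lemma'' applied to the commutative ladder of long exact $\Hom$-sequences. Applying $\Hom_A(N_1,-)$ and $\Hom_A(N_2,-)$ to the short exact sequence $0\to M'\overset{\iota}{\rightarrow} M\overset{\pi}{\rightarrow} M''\to 0$ yields two left exact rows, and the embedding $i\colon N_1\hookrightarrow N_2$ induces vertical restriction maps between them. In the resulting commutative diagram
\[
0\to\Hom_A(N_j,M')\to\Hom_A(N_j,M)\to\Hom_A(N_j,M'')
\]
($j=2$ on top, $j=1$ on the bottom, the horizontal maps induced by $\iota$ and $\pi$) the three vertical maps are $\iota'_1$, $\iota'_0$, $\iota'_2$ from left to right. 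By hypothesis the top row's second map $\Hom_A(N_2,M)\to\Hom_A(N_2,M'')$ is onto and $\iota'_1,\iota'_2$ are onto, and the goal is to show $\iota'_0$ is onto.

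First I would take an arbitrary $g\in\Hom_A(N_1,M)$ and push it to $\pi\circ g\in\Hom_A(N_1,M'')$; lifting through the surjection $\iota'_2$ gives $h\in\Hom_A(N_2,M'')$ with $h\circ i=\pi\circ g$, and lifting $h$ through the surjective map $\Hom_A(N_2,M)\to\Hom_A(N_2,M'')$ gives $\tilde h\in\Hom_A(N_2,M)$ with $\pi\circ\tilde h=h$. Next, the homomorphism $g-\tilde h\circ i\in\Hom_A(N_1,M)$ is killed by composition with $\pi$, so by exactness of the bottom row at its middle term it has the form $\iota\circ g'$ for some $g'\in\Hom_A(N_1,M')$; lifting $g'$ through the surjection $\iota'_1$ to $g''\in\Hom_A(N_2,M')$ and setting $\hat h:=\tilde h+\iota\circ g''\in\Hom_A(N_2,M)$, one checks $\hat h\circ i=\tilde h\circ i+\iota\circ g'=g$, i.e.\ $\iota'_0(\hat h)=g$.

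There is no real obstacle here: the argument is a routine element chase, the surjective version of the four lemma. The only points worth flagging are that the bottom row is used only through its exactness at the middle term — automatic from left exactness of $\Hom_A(N_1,-)$, with no bottom-row surjectivity needed — and that all three surjectivity hypotheses are genuinely used: $\iota'_2$ for the first lift, the top-row surjectivity for the second, and $\iota'_1$ for the last.
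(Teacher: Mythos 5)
Your proof is correct and is exactly the diagram chase the paper has in mind: the paper's own proof of this lemma is the single sentence ``This follows from diagram chasing,'' and your argument spells out that chase (the surjective four-lemma on the two left-exact $\Hom$-rows) in full and accurate detail.
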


\begin{proof} This follows from diagram chasing.
\end{proof}

Now we can give the proof of the first main result of this paper.

\medskip
\noindent
{\textbf{Proof of Theorem \ref{mainthm1}}:} Since $A$ is a projective left $A$-module, $A\in\rf(\Delta)$ by definition. Applying Lemmata \ref{homolog1} and  \ref{embed}, we can get an integer $a\in\N$ and an embedding $\delta_0: A\hookrightarrow T^{\oplus a}$ as $A$-modules. In particular, $T^{\oplus a}$ and hence $T$ is a faithful $A$-module.

Let $\lam\in\Lam^{+}$. We fix a $K$-basis $\{v_1,\cdots,v_{m_\lam}\}$ of $\lnab(\lam)$. Let $\pi_\lam: T^{\oplus r}\twoheadrightarrow\lnab(\lam)$ be the given surjection. For each $1\leq j\leq m_\lam$, we fix an element $u_j\in T^{\oplus r}$ such that $\pi_\lam(u_j)=v_j$, and we denote by $\delta_j^\lam$ the following left $A$-module homomorphism:  $$
\delta_j^\lam: A\rightarrow T^{\oplus r},\quad\, x\mapsto xu_j,\,\,\forall\,x\in A .
$$

We set $r':=a+\sum_{\lam\in\Lam^{+}}rm_\lam$. We now define a map $\delta: A\rightarrow T^{\oplus r'}$ as follows: $$\begin{aligned}
\delta:\,\, A&\rightarrow T^{\oplus r'}=T^{\oplus a}\oplus\bigoplus_{\lam\in\Lam^{+}}(T^{\oplus r})^{\oplus m_\lam}\\
x &\mapsto \delta_0(x)\oplus\bigoplus_{\lam\in\Lam^{+}}(\delta_1^\lam(x)\oplus\cdots\oplus\delta_{m_\lam}^\lam(x)),\,\,\,\forall\,x\in A .
\end{aligned}
$$

Since $\delta_0$ is injective, it is clear that $\delta$ is injective too. Furthermore, by construction, it is easy to see that each basis element $v_j\in\lnab(\lam)$ has a preimage in $\Hom_A(T^{\oplus r'},\lnab(\lam))$ under the natural homomorphism
$\Hom_A(T^{\oplus r'},\lnab(\lam))\rightarrow\Hom_A(A,\lnab(\lam))\cong\lnab(\lam)$ induced by $\delta$. In other words, the natural homomorphism
$\Hom_A(T^{\oplus r'},\lnab(\lam))\rightarrow\Hom_A(A,\lnab(\lam))\cong\lnab(\lam)$ induced by $\delta$ is always surjective. Applying Lemmata \ref{homolog1} and \ref{surj} we can deduce that for any $M\in\mathcal{F}(\lnab)$ the natural homomorphism
$\Hom_A(T^{\oplus r'},M)\rightarrow\Hom_A(A,M)$ induced by $\delta$ is always surjective. In particular, this implies that the injection $0\rightarrow A\rightarrow T^{\oplus r'}$ is a left $\add(T)$-approximation.

The surjection of $\Hom_A(T^{\oplus r'},\lnab(\lam))\rightarrow\Hom_A(A,\lnab(\lam))$ for any $\lam\in\Lam^{+}$ and the fact that $$
\Ext_A^1(T^{\oplus r'},\lnab(\lam))=0$$
imply that $\Ext_A^1(T^{\oplus r'}/A,\lnab(\lam))=0$ and hence $T^{\oplus r'}/A\in\mathcal{F}(\Delta)$ by Lemma \ref{homolog1}. Now applying Lemma \ref{embed} and using the assumption that for any $\mu\in\Lam^{+}$, $\Delta(\mu)\hookrightarrow T^{\oplus k}$ for some $k\in\N$, we can deduce that there is an integer $s'\in\N$ such that $T^{\oplus r'}/A\hookrightarrow T^{\oplus s'}$. In other words, the injective left $\add(T)$-approximation $0\rightarrow A\overset{\delta}{\rightarrow}T^{\oplus r'}$ can be continued to an exact sequence
$0\rightarrow A\overset{\delta}{\rightarrow}T^{\oplus r'}\overset{\varepsilon}{\rightarrow} T^{\oplus s'}$. Finally, using Theorem \ref{ASthm}, we prove the theorem.

\begin{cor}\label{cor to thm1.9}
Let $A$ be a finite dimensional standardly stratified algebra such that the injective hull of
$\Delta(\lambda)$ is projective and the projective cover of $\lnab(\lam)$ is injective for every $\lambda\in \Lam^{+}$.
If $T$ is a projective-injective generator, then $A$ has the double centraliser property with respect to $T$.
\end{cor}

\bigskip
\bigskip
\section{Quasi-hereditary algebra with a simple preserving duality}

In this section we shall focus on the finite dimensional quasi-hereditary algebras over a field with a simple preserving duality. We shall give the proof of the second and third main
results (Theorem \ref{mainthm2}, Theorem \ref{mainthm3}) of this paper for this class of algebras.

\begin{dfn} We say that $A$ has a simple preserving duality if there exists an exact, involutive and contravariant equivalence $\circ: A\lmod\rightarrow A\lmod$ which preserves the isomorphism classes of simple modules.
\end{dfn}

Let $A$ be a finite dimensional standardly stratified algebra with a simple preserving duality. Then for each $\lam\in\Lam^+$, we have $$
\Delta(\lam)^\circ\cong\nabla(\lam),\quad\ldel(\lam)^\circ\cong\lnab(\lam),\quad P(\lam)^\circ\cong I(\lam) .
$$
In particular, $A^\circ$ is an injective left $A$-module.

\begin{dfn}\label{proper}\text{(\cite{AHLU1, ADL, D1, M1})} Let $A$ be a finite dimensional standardly stratified algebra. If $\preceq$ is a partial order on $\Lam^{+}$ and the following conditions are satisfied for all $\lam\in\Lam^{+}$: \begin{enumerate}
\item the kernel of the canonical epimorphism $\ldel(\lam)\twoheadrightarrow L(\lam)$ has a filtration with subquotients
$L(\mu), \mu\prec\lam$;
\item $\Delta(\lam)$ has a filtration with subquotients $\ldel(\lam)$,
\end{enumerate}
then we shall call $A$ a properly stratified algebra.
\end{dfn}

If $\Delta(\lam)=\ldel(\lam)$ for each $\lam\in\Lam^+$, then the properly stratified algebra $A$ is a quasi-hereditary algebra (\cite{CPS0}, \cite{DR1}). In that case, we also have $\lnab(\lam)=\nabla(\lam)$ for any $\lam\in\Lam^{+}$.

Let $M\in A\lmod$. We define $\dim_{\rf(\Delta)}M$ to be the minimal integer $j$ such that there is an exact sequence of the form $$
0\rightarrow M_j\rightarrow M_{j-1}\rightarrow\cdots\rightarrow  M_1\rightarrow M_0\rightarrow M\rightarrow 0,
$$
where $M_i\in\rf(\Delta)$ for any $0\leq i\leq j$; while if no such integer $j$ exists then we define $\dim_{\rf(\Delta)}M:=\infty$.

\begin{lem}\text{(\cite[Corollary 6]{MO})}\label{MO} Assume that $A$ is a properly stratified algebra having a simple preserving duality,
and such that every tilting $A$-module is cotilting.
Let $M\in A\lmod$ with $\dim_{\rf(\Delta)}M=t<\infty$. Then $\Ext_A^{2t}(M,M^\circ)\neq 0$.
\end{lem}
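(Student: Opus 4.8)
The plan is to compute $\Ext_A^\bullet(M,M^\circ)$ through a minimal $\rf(\Delta)$-resolution of $M$ and its $\circ$-dual, and then to isolate the top cohomological degree. First I would record the $\circ$-dual of Lemma~\ref{homolog1}(3). Write $\rf(\ldel)$ and $\rf(\nabla)$ for the categories of modules admitting a filtration by proper standard, resp.\ costandard, modules. Applying $\circ$ to Lemma~\ref{homolog1}(3) and using $\Delta(\lam)^\circ\cong\nabla(\lam)$, $\ldel(\lam)^\circ\cong\lnab(\lam)$ gives $\Ext_A^i(\ldel(\lam),\nabla(\mu))=\delta_{i0}\delta_{\lam\mu}K$, and since by Definition~\ref{proper}(2) every $\Delta(\lam)$ is $\ldel$-filtered (so $\rf(\Delta)\subseteq\rf(\ldel)$) this upgrades to: $\Ext_A^i(X,Y)=0$ for all $i\ge1$, $X\in\rf(\Delta)$, $Y\in\rf(\nabla)$, together with $\dim_K\Hom_A(X,Y)=\sum_\lam(X:\ldel(\lam))(Y:\nabla(\lam))$. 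Next, fix a \emph{minimal} $\rf(\Delta)$-resolution $0\to M_t\xrightarrow{d_t}M_{t-1}\to\cdots\to M_0\to M\to0$; applying $\circ$ turns it into a minimal $\rf(\nabla)$-coresolution $0\to M^\circ\to M_0^\circ\to\cdots\to M_t^\circ\to0$ of $M^\circ$, of minimal length $t$, and $(M_q^\circ:\nabla(\lam))=(M_q:\Delta(\lam))$ for all $q,\lam$.

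Because $\Ext_A^i(M_p,M_q^\circ)=0$ for all $i\ge1$ and all $p,q$, the group $\Ext_A^n(M,M^\circ)$ is the $n$-th cohomology of the total complex of the double complex $\bigl(\Hom_A(M_p,M_q^\circ)\bigr)_{0\le p,q\le t}$; this is supported in degrees $0,\dots,2t$, so $\Ext_A^n(M,M^\circ)=0$ for $n>2t$, and $\Ext_A^{2t}(M,M^\circ)$ is the cokernel of $\Hom_A(M_{t-1},M_t^\circ)\oplus\Hom_A(M_t,M_{t-1}^\circ)\to\Hom_A(M_t,M_t^\circ)$, $(f,g)\mapsto f\circ d_t\pm d_t^\circ\circ g$ (with $d_t^\circ\colon M_{t-1}^\circ\to M_t^\circ$ the last differential of the coresolution). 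Now strip off the last short exact sequence $0\to M'\to M_0\to M\to0$ of the resolution, so that $M'$ inherits a minimal $\rf(\Delta)$-resolution of length $t-1$, with $\circ$-dual $0\to M^\circ\to M_0^\circ\to (M')^\circ\to0$. Feeding these into the long exact $\Ext$ sequences and using that $\Ext_A^j(M_0,M^\circ)=0$ for $j>t$ and $\Ext_A^j(M',M_0^\circ)=0$ for $j>t-1$ (each computed from the relevant finite complex of $\Ext$-acyclic modules), one gets, for $t\ge2$, natural isomorphisms $\Ext_A^{2t}(M,M^\circ)\cong\Ext_A^{2t-1}(M',M^\circ)\cong\Ext_A^{2t-2}(M',(M')^\circ)$. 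Iterating reduces everything to $t=0$ and $t=1$. The case $t=0$ is immediate from the multiplicity formula: $\dim_K\Hom_A(M,M^\circ)=\sum_\lam(M:\ldel(\lam))(M:\Delta(\lam))\ge\sum_\lam(M:\Delta(\lam))^2>0$ since $\rf(\Delta)\subseteq\rf(\ldel)$ forces $(M:\ldel(\lam))\ge(M:\Delta(\lam))$.

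The remaining case $t=1$ is where the real work lies, and I expect it to be the main obstacle; here the minimality of the presentation $0\to M_1\xrightarrow{d}M_0\to M\to0$ is essential (with $M_0,M_1\in\rf(\Delta)$, $M\notin\rf(\Delta)$). By the computation above, $\Ext_A^2(M,M^\circ)$ is the cokernel of $\Hom_A(M_1,M_0^\circ)\oplus\Hom_A(M_0,M_1^\circ)\to\Hom_A(M_1,M_1^\circ)$, and I would show it is nonzero by exhibiting a linear functional on $\Hom_A(M_1,M_1^\circ)$ killing the image. The key input is that minimality makes the maps induced by $d$ on the functors $\Hom_A(-,\lnab(\nu))$ — which play the role of the simple-homology functors relative to $\rf(\Delta)$ — vanish for every $\nu$, and dually the maps induced by $d^\circ$ on $\Hom_A(\ldel(\nu),-)$ vanish; this should confine the two subspaces $d^*\Hom_A(M_0,M_1^\circ)$ and $d^\circ_*\Hom_A(M_1,M_0^\circ)$ to the ``$\rf(\Delta)$-radical part'' of $\Hom_A(M_1,M_1^\circ)$, so that at least one of the ``matching'' homomorphisms $M_1\twoheadrightarrow\ldel(\nu)\hookrightarrow\lnab(\nu)\hookrightarrow M_1^\circ$ survives in the cokernel. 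The delicate point — and the source of the difficulty — is that over a merely properly stratified algebra $\Hom_A(M_1,M_1^\circ)$ is strictly larger than the span of these matching maps, so this bookkeeping has to be carried out carefully; the economical alternative is to invoke directly the general homological machinery for resolving subcategories equipped with a duality from \cite{MO}, of which this lemma is exactly \cite[Corollary~6]{MO}.
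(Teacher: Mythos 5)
The statement is quoted by the paper from \cite[Corollary 6]{MO} and is given no proof there, so the only benchmark is whether your argument is actually self-contained. It is not: there is a genuine gap at the case $t=1$, which is where all the content of the lemma lives. Your reduction is fine — the $\circ$-dual of Lemma \ref{homolog1}(3), the balanced double-complex computation of $\Ext_A^\ast(M,M^\circ)$ from an $\rf(\Delta)$-resolution and its dual coresolution, the dimension shift $\Ext_A^{2t}(M,M^\circ)\cong\Ext_A^{2t-2}(M',(M')^\circ)$ for $t\ge2$, and the $t=0$ case via the multiplicity formula all check out (and the dimension shift does not even need minimality, since the first syzygy of any length-$t$ resolution automatically has $\rf(\Delta)$-dimension $t-1$). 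But for $t=1$ you only describe a strategy: the assertion that minimality of $0\to M_1\xrightarrow{d}M_0\to M\to 0$ forces $d^\ast$ to vanish on every $\Hom_A(-,\lnab(\nu))$ is itself unproved (it requires a precise notion of minimal $\rf(\Delta)$-resolution and an argument that the $\lnab(\nu)$ detect minimality the way simples do for projective covers), and even granting it, the step from ``the image of $d^\ast\oplus d^\circ_\ast$ lies in the radical part of $\Hom_A(M_1,M_1^\circ)$'' to ``the cokernel is nonzero'' is exactly the difficulty you yourself flag: since $(\Delta(\lam):\ldel(\lam))$ can exceed $1$, the matching maps $M_1\twoheadrightarrow\ldel(\nu)\to\lnab(\nu)\hookrightarrow M_1^\circ$ do not span $\Hom_A(M_1,M_1^\circ)$, and no argument is given that one of them survives modulo the image. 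Ending by saying one can ``invoke directly'' \cite[Corollary 6]{MO} concedes the point: that citation \emph{is} the paper's entire proof, and everything before it in your write-up is then an unfinished reduction rather than a proof.

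If you want to complete the argument rather than cite it, the missing ingredient is the characterization $\dim_{\rf(\Delta)}M=\max\{i:\Ext_A^i(M,\lnab(\nu))\neq 0\text{ for some }\nu\}$ (this is the substantive result in \cite{MO} feeding into their Corollary 6); from $\Ext_A^t(M,\lnab(\nu))\neq 0$ and its $\circ$-dual $\Ext_A^t(\ldel(\nu),M^\circ)\neq 0$ one must then produce a nonvanishing Yoneda composite in $\Ext_A^{2t}(M,M^\circ)$, and that nonvanishing is precisely the step your sketch does not supply.
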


\begin{lem}\label{Delta-flag}
Let $A$ be a quasi-hereditary algebra with a simple preserving duality $\circ$. Let
$0\rightarrow P\rightarrow T\rightarrow K\rightarrow 0$ be a short exact sequence in $A\lmod$ with
$P$ a projective $A$-module and $T$ a tilting $A$-module. Then we have $K\in\rf(\Delta)$.
\end{lem}

\begin{proof}
Applying the duality functor $\circ$, we get another short exact sequence
\begin{equation}\label{one more}
0\rightarrow K^\circ\hookrightarrow T^\circ\twoheadrightarrow P^\circ\rightarrow 0 .
\end{equation}
Suppose that $K\notin\rf(\Delta)$. Then we must have that $\dim_{\rf(\Delta)}K=1$. Applying Lemma \ref{MO}, we get that $\Ext_A^2(K, K^\circ)\neq 0$.

We have the following long exact sequence of homomorphisms:
$$
\cdots\rightarrow\Ext_A^1(P,T)\rightarrow\Ext_A^2(K,T)\rightarrow\Ext_A^2(T,T)\rightarrow\cdots .
$$
Applying Lemma \ref{homolog1} and noting that $P$ is projective, we can deduce that $\Ext_A^1(P,T)=0=\Ext_A^2(T,T)$. It follows that $\Ext_A^2(K,T)=0$.
Since $A$ is quasi-hereditary algebra with a simple preserving duality, we have $T(\lam)^\circ\cong T(\lam)$ for each $\lam\in\Lam^{+}$. It follows that $T^\circ\cong T$. Hence $\Ext_A^2(K,T^\circ)=0$.

On the other hand, from (\ref{one more}) we can get another long exact sequence of homomorphisms:
$$
\cdots\rightarrow\Ext_A^1(K,P^\circ)\rightarrow\Ext_A^2(K,K^\circ)\rightarrow\Ext_A^2(K,T^\circ)\rightarrow\cdots .
$$
Since $P^\circ$ is injective, we have $\Ext_A^1(K,P^\circ)=0$. By the last paragraph, $\Ext_A^2(K,T^\circ)=0$. It follows that $\Ext_A^2(K,K^\circ)=0$, which is a contradiction. This proves that $K\in\rf(\Delta)$.
\end{proof}

The following proposition plays a crucial role in the proof of the second and the third main results of this paper.

\begin{prop}\label{keyprop1} Let $A$ be a quasi-hereditary algebra with a simple preserving duality $\circ$. Let $T$ be a tilting module in $A\lmod$. Suppose there is an embedding $\iota: A\hookrightarrow T$ in $A\lmod$. Then we have that $T/A\in\rf(\Delta)$ and, for any $\lam\in\Lam^{+}$, there exists a surjective homomorphism $T^{\oplus m_\lam}\twoheadrightarrow\nabla(\lam)$ as well as an injective homomorphism
$\Delta(\lam)\hookrightarrow T^{\oplus m_\lam}$, where $m_\lam:=\dim\nabla(\lam)$.
\end{prop}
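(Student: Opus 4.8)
\medskip
\noindent\textbf{Proof proposal.} The plan is to prove first that $T/A\in\rf(\Delta)$, the main leverage coming from the homological result Lemma~\ref{MO}, and then to read off the claimed surjections and embeddings by lifting a spanning set of $\nabla(\lam)$ along the inclusion $\iota$ and dualising. I shall use freely the following facts: since $A$ is quasi-hereditary one has $\lnab(\lam)=\nabla(\lam)$, $A$ has finite global dimension, and (by Lemma~\ref{homolog1}(4), together with closure of $\rf(\Delta)$ under direct summands) every projective lies in $\rf(\Delta)$; $A^\circ$ is injective; the indecomposable tilting modules are $\circ$-self-dual (the $\circ$-image of $T(\lam)$ is again an indecomposable tilting module whose highest $\nabla$-label is $\lam$), so $T^\circ\cong T$; and, by induction on filtration lengths from Lemma~\ref{homolog1}(3), $\Ext_A^i(X,Y)=0$ for all $i\geq 1$ whenever $X\in\rf(\Delta)$ and $Y\in\rf(\nabla)$ --- in particular $\Ext_A^i(T,T)=0$ for $i\geq 1$.

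\emph{Step 1 (the crux): $T/A\in\rf(\Delta)$.} Applying $\circ$ to $0\to A\xrightarrow{\iota}T\to T/A\to 0$ and using $T^\circ\cong T$ produces a second short exact sequence $0\to(T/A)^\circ\to T\to A^\circ\to 0$. Feeding $\Hom_A(-,(T/A)^\circ)$ into the first sequence and using that $A$ is projective gives $\Ext_A^i(T/A,(T/A)^\circ)\cong\Ext_A^i(T,(T/A)^\circ)$ for all $i\geq 2$; feeding $\Hom_A(T,-)$ into the second sequence and using $\Ext_A^i(T,T)=0$ for $i\geq 1$ together with injectivity of $A^\circ$ (so $\Ext_A^i(T,A^\circ)=0$ for $i\geq 1$) gives $\Ext_A^i(T,(T/A)^\circ)=0$ for all $i\geq 2$. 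Hence $\Ext_A^i(T/A,(T/A)^\circ)=0$ for all $i\geq 2$. On the other hand, a finite projective resolution of $T/A$ shows $t:=\dim_{\rf(\Delta)}(T/A)<\infty$, and since a quasi-hereditary algebra is properly stratified, Lemma~\ref{MO} yields $\Ext_A^{2t}(T/A,(T/A)^\circ)\neq 0$. Because $2t\geq 2$ whenever $t\geq 1$, this forces $t=0$, that is, $T/A\in\rf(\Delta)$.

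\emph{Step 2: the surjections and embeddings.} Fix $\lam\in\Lam^{+}$ and a $K$-basis $v_1,\dots,v_{m_\lam}$ of $\nabla(\lam)$, with $m_\lam=\dim\nabla(\lam)$. By Step~1 and Lemma~\ref{homolog1}(1) we have $\Ext_A^1(T/A,\nabla(\lam))=0$, so applying $\Hom_A(-,\nabla(\lam))$ to $0\to A\xrightarrow{\iota}T\to T/A\to 0$ shows that restriction along $\iota$, namely $\Hom_A(T,\nabla(\lam))\to\Hom_A(A,\nabla(\lam))$, is surjective. Under the identification $\Hom_A(A,\nabla(\lam))\cong\nabla(\lam)$, $f\mapsto f(1)$, for each $j$ there is $g_j\in\Hom_A(T,\nabla(\lam))$ with $g_j(\iota(1))=v_j$; then $(g_1,\dots,g_{m_\lam})\colon T^{\oplus m_\lam}\to\nabla(\lam)$ has image containing every $v_j$ and is therefore surjective. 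Applying $\circ$ to this epimorphism and using $(T^{\oplus m_\lam})^\circ\cong T^{\oplus m_\lam}$ and $\nabla(\lam)^\circ\cong\Delta(\lam)$ (as $\circ$ is involutive) converts it into the required embedding $\Delta(\lam)\hookrightarrow T^{\oplus m_\lam}$; note that $\dim\Delta(\lam)=m_\lam$ as well, so this is the stated number of copies.

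The step I expect to be the genuine obstacle is Step~1: recognising that the auxiliary module $M=T/A$ is exactly what should be fed into Lemma~\ref{MO}, and then carrying out the homological bookkeeping --- balancing the sequence $0\to A\to T\to T/A\to 0$ against its $\circ$-dual and invoking the three vanishing inputs (projectivity of $A$, injectivity of $A^\circ$, Ext-acyclicity of the tilting module $T$ against $\rf(\nabla)$) --- so as to annihilate \emph{every} higher group $\Ext_A^{\geq 2}(T/A,(T/A)^\circ)$. Once $T/A\in\rf(\Delta)$ is established, Step~2 is routine.
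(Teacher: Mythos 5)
Your proof is correct and follows essentially the same route as the paper: dualise $0\to A\to T\to T/A\to 0$, feed $T/A$ into Lemma~\ref{MO}, and kill the relevant $\Ext$ group using projectivity of $A$, injectivity of $A^\circ$, and $\Ext^{\geq 1}_A(T,T)=0$, then lift a basis of $\nabla(\lam)$ along $\iota$ and dualise. The only (harmless) variations are that you prove vanishing of all $\Ext^{\geq 2}_A(T/A,(T/A)^\circ)$ and invoke finite global dimension to get $t<\infty$, whereas the paper observes directly from $0\to A\to T\to T/A\to 0$ that $\dim_{\rf(\Delta)}(T/A)\leq 1$ and only needs the case $i=2$.
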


\begin{proof}
Applying Lemma \ref{Delta-flag} to the short exact sequence $0\rightarrow A\overset{\iota}{\hookrightarrow} T\twoheadrightarrow T/A\rightarrow 0$,
we get $T/A\in\rf(\Delta)$.

Applying Lemma \ref{homolog1}, we get that $\Ext_A^1(T/A,\nabla(\lam))=0$ for any $\lam\in\Lam^{+}$. Thus we have an exact sequence of homomorphisms: $$
0\rightarrow\Hom_A(T/A,\nabla(\lam))\rightarrow\Hom_A(T,\nabla(\lam))\overset{\iota_\ast}{\rightarrow}\Hom_A(A,\nabla(\lam))=\nabla(\lam)\rightarrow\Ext_A^1(T/A,\nabla(\lam))=0 .
$$
It follows that the canonical map $\iota_\ast: \Hom_A(T,\nabla(\lam))\rightarrow\Hom_A(A,\nabla(\lam))=\nabla(\lam)$ induced from $\iota$ is surjective.

Let $\lam\in\Lam^{+}$ and $a:=\iota(1)$. We fix a $K$-basis $\{v_1,\cdots,v_{m_\lam}\}$ of $\nabla(\lam)$. For each $1\leq j\leq m_\lam$, we can choose a map $\rho_j\in\Hom_A(T,\nabla(\lam))$, such that $v_j=\iota_\ast(\rho_j)=\rho_j(a)$. Now we define a map $\rho: T^{\oplus m_\lam}\rightarrow\nabla(\lam)$ as follows: $$
(w_1,\cdots ,w_{m_\lam})\mapsto \sum_{j=1}^{m_\lam}\rho_j(w_j),\,\,\forall\,(w_1,\cdots ,w_{m_\lam})\in T^{\oplus m_\lam}.
$$
It is clear that $\rho$ is a left $A$-module homomorphism and $v_j\in\im(\rho)$ for each $1\leq j\leq m_\lam$. This implies that $\rho$ is a surjective $A$-module homomorphism. By taking duality, we also get an injective homomorphism $\Delta(\lam)\hookrightarrow T^{\oplus m_\lam}$. This completes the proof of the proposition.
\end{proof}

Now we can give the proof of our second main result of this paper.

\medskip
\noindent
{\textbf{Proof of Theorem \ref{mainthm2}}:}  By assumption, $T$ is a faithful module over $A$. Then there exists a natural number $r$ and an embedding $\delta: A\hookrightarrow T^{\oplus r}$ as $A$-modules. By Remark \ref{rem1}, $\delta$ is an injective left $\add(T)$-approximation of $A$. Applying Proposition \ref{keyprop1}, we can find a natural number $m:=\max\{m_\lam|\lam\in\Lam^{+}\}$,  and a surjective homomorphism $T^{\oplus rm}\twoheadrightarrow\nabla(\lam)$. Taking duality, we get an embedding
$\Delta(\lam)\hookrightarrow (T^{\oplus rm})^\circ\cong (T^{\circ})^{\oplus rm}\cong T^{\oplus rm}$. Thus the theorem follows from Theorem \ref{mainthm1}.
\medskip


Using Theorem \ref{mainthm2}, we can easily recover many known double centralizer properties or simplify the proof of the corresponding Schur-Weyl dualities in non-semisimple or even integral situation.

\begin{exmp} Let $n,r\in\N$ and $V$ be an $n$-dimensional vector space over an arbitrary field $K$. Let $q\in K^\times$. Let $\mathscr{H}_q(\Sym_n)$ be the Iwahori-Hecke algebra associated to the symmetric group with Hecke parameter $q$. There is a natural right action of $\mathscr{H}_q(\Sym_n)$ on $V^{\otimes n}$ (\cite{DJ1}). Let $S_q(n,r):=\End_{\mathscr{H}_q(\Sym_n)}(V^{\otimes r})$ be the Dipper-James $q$-Schur algebra over $K$ (\cite{DJ1}). Then $V^{\otimes r}$ is a faithful tilting module over $S_q(n,r)$. As a cellular algebra, $S_q(n,r)$ has an anti-involution $\ast$ which sends its semistandard basis $\varphi_{ST}$ to $\varphi_{TS}$ (cf. \cite[Proposition 4.13]{Mat}). It follows from Theorem \ref{mainthm2} that $S_q(n,r)$ has the double centralizer property with respect to $V^{\otimes r}$. In particular, the $V^{\otimes r}$-dominant dimension of $S_q(n,r)$ is at least two (\cite[Theorem 3.6]{KSX}). Specializing $q$ to $1$, we also get the double centralizer property of the classical Schur algebra $S(n,r)$.
\end{exmp}

\begin{exmp} Let $n,\ell\in\N$ and $\xi,Q_1,\cdots,Q_\ell\in K^\times$. Let $\mathscr{H}_{n}=\mathscr{H}_{n}(\xi, Q_1,\cdots Q_r)$ be the non-degenerate cyclotomic Hecke algebra of type $G(\ell,1,n)$ (\cite{AK}, \cite{DJM}). By definition, $\mathscr{H}_{n}$ is a unital $K$-algebra generated by $T_0,T_1,\cdots,T_{n-1}$ which satisfies the following relations: $$\begin{aligned}
&(T_0-Q_1)\cdots (T_0-Q_\ell)=0, \quad T_0T_1T_0T_1=T_1T_0T_1T_0,\\
&(T_i+1)(T_i-\xi)=0,\quad \forall\,1\leq i<n,\\
&T_jT_k=T_kT_j,\quad \forall\,0\leq j<k-1\leq n-2,\\
&T_rT_{r+1}T_r=T_{r+1}T_rT_{r+1},\quad \forall\,1\leq r<n-1 .
\end{aligned}
$$
For each $1\leq m\leq n$, define $L_m:=T_{m-1}\cdots T_1T_0T_1\cdots T_{m-1}$. For each multicomposition $\blam=(\lam^{(1)},\cdots,\lam^{(\ell)})$ of $n$ with $\ell$-components, we set $a_1=0, a_2=|\lam^{(1)}|,a_3=|\lam^{(1)}|+|\lam^{(2)}|,\cdots,a_n=\sum_{j=1}^{n-1}|\lam^{(j)}|$,
and define $$
m_{\blam}:=\Bigl(\prod_{k=1}^{\ell}\prod_{m=1}^{a_k}(L_m-Q_k)\Bigr)\sum_{w\in\Sym_\blam}T_w,
$$
where $\Sym_\blam\cong\Sym_{\lam^{(1)}}\times\Sym_{\lam^{(2)}}\times\cdots\times\Sym_{\lam^{(\ell)}}$ is the standard Young subgroup of $\Sym_n$ corresponding to $\blam$.

Let $\Lam$ be a subset of the set  $\mathscr{C}_n$ of all multicompositions of $n$ which have $\ell$ components such that if $\blam\in\Lam$ and $\bmu$ is a multipartition of $n$ such that $\bmu\rhd\blam$, then $\bmu\in\Lam$, where $\rhd$ is the dominance partial order on $\mathscr{C}_n$ defined in \cite{Mat}. Let $\Lam^+$ be the set of multipartitions in $\Lam$. We define the cyclotomic tensor space $$
M:=\bigoplus_{\blam\in\Lam}m_\blam\mathscr{H}_n ,
$$
and define the cyclotomic $q$-Schur algebra $$
\mathscr{S}_n:=\End_{\mathscr{H}_n}(M) .
$$
By \cite{DJM}, we know that $\mathscr{S}_n$ is a cellular and quasi-hereditary algebra. By \cite[Lemma 4.8]{LR}, $M$ is a faithful tilting module over $\mathscr{S}_n$. Applying Theorem \ref{mainthm2}, we get that $\mathscr{S}_n$ has the double centralizer property with respect to $M$. That is, $$
\End_{\End_{\mathscr{S}_n}(M)}(M)=\mathscr{S}_n .
$$
This recovers the result \cite[Theorem 4.10]{LR}.
\end{exmp}

\begin{exmp} Let $m,n\in\N$ and $q\in K^\times$. Let $U_v(\mathfrak{sp}_{2m})$ be Drinfeld-Jimbo's quantized enveloping algebra of the symplectic Lie algebra $\mathfrak{sp}_{2m}(\C)$ over the rational functional field $\Q(v)$. Let $U_q(\mathfrak{sp}_{2m}):=K\otimes_{\Z[v,v^{-1}]}\mathbb{U}_v(\mathfrak{sp}_{2m})$, where $\mathbb{U}_v(\mathfrak{sp}_{2m})$ is Lusztig's $\Z[v,v^{-1}]$-form of $U_v(\mathfrak{sp}_{2m})$, $K$ is regarded as a $\Z[v,v^{-1}]$-algebra by specializing $v$ to $q$. Let $V\cong L(\varepsilon_1)$ be the $2m$-dimensional natural representation of  $U_q(\mathfrak{sp}_{2m})$. Let $\mathfrak{B}_{n,q}=\mathfrak{B}_{n}(-q^{2m+1},q)$ be the specialized Birman-Murakami-Wenzl algebra over $K$. We refer the readers to \cite{HuXiao} for its precise definition. There is a natural right action of  $\mathfrak{B}_{n,q}$ on $V^{\otimes n}$ which commutes with the natural left action of $U_q(\mathfrak{sp}_{2m})$. Let $S_q^{sy}(m,n):=\End_{\mathfrak{B}_{n,q}}(V^{\otimes n})$ be associated the symplectic $q$-Schur algebra. By \cite{Oe2}, we know that $S_q^{sy}(m,n)$ is cellular and quasi-hereditary. Moreover, $V^{\otimes n}$ is a faithful tilting module over $S_q^{sy}(m,n)$. Applying Theorem \ref{mainthm2}, we get that ${S}_q^{sy}(m,n)$ has the double centralizer property with respect to $V^{\otimes n}$. That is, $$
\End_{\End_{{S}_q^{sy}(m,n)}(V^{\otimes n})}(V^{\otimes n})={S}_q^{sy}(m,n) .
$$
In particular, the $V^{\otimes n}$-dominant dimension of $S_q^{sy}(m,n)$ is at least two. Specializing $q$ to $1$, we also get the double centralizer property of the classical symplectic Schur algebra $S^{sy}(m,n)$ (\cite{Oe}).
\end{exmp}

The following example is an easier case of the double centralizer property which was already well known before (cf. \cite[Section 2.3]{KSX}). 

\begin{exmp} Let $\mathfrak{g}$ be an arbitrary complex semisimple Lie algebra with root system $\Phi$ and the set $\Pi$ of simple roots. Let $\mathfrak{g}=\mathfrak{n}^-\oplus\mathfrak{h}\oplus\mathfrak{n}^+$ be a triangular decomposition of $\mathfrak{g}$ and $W$ the Weyl group of $\mathfrak{g}$. Let $\mathcal{O}$ be the Bernstein-Gelfand-Gelfand (BGG) category which consists of finitely generated $U(\mathfrak{g})$-modules which are $\mathfrak{h}$-semisimple and locally $U(\mathfrak{n})$-finite (\cite{Hum}). For any $w\in W, \lam\in\mathfrak{h}^*$, we define $w\cdot\lam=w(\lam+\rho)-\rho$, where $\rho$ is the half sum of all positive roots. Let $I\subset\Pi$ be a subset of $\Pi$ and $\mathfrak{p}:=\mathfrak{p}_I$ the associated standard parabolic subalgebra of $\mathfrak{g}$. Let $\mathfrak{p}_I=\mathfrak{l}_I\oplus \mathfrak{u}_I$ be the Levi decomposition of $\mathfrak{p}$. Let $\mathcal{O}^\mathfrak{p}$ denote the corresponding parabolic BGG category \cite{Ro} which is a full subcategory of $\co$ consisting of modules which are $U(\mathfrak{l}_I)$-semisimple and locally $U(\mathfrak{u}_I)$-finite. For each $\lambda\in\fh$, let $\co_\lam$ be the Serre subcategory of $\co$ which is generated by $L(x\cdot\lam)$ for all $x\in W$, let $\mathcal{O}_\lambda^\mathfrak{p}:=\mathcal{O}_\lambda\cap\mathcal{O}^\mathfrak{p}$. The simple module $L(\lambda)$ lies in $\mathcal{O}^\mathfrak{p}$ if and only if $\lambda\in \Lam_{I}^+:=\{\lambda\in\fh|\<\lambda,\alpha^\vee\>\in\mathbb{Z}^{\geq 0},\,\forall\,\alpha\in I\}$. For $\lambda\in \Lam_{I}^+$, let $M_I(\lambda)$ denote the parabolic Verma module with highest weight $\lambda$ and  $P_I(\lambda)$ denote the projective cover of $L(\lambda)$ in $\mathcal{O}^\mathfrak{p}$. Suppose that $\lam\in\Lam_I^+$ is an integral weight. Let $$
P_{I,\lam}:=\bigoplus_{w\in W, w\cdot\lam\in\Lam_I^{+}}P_I(w\cdot\lam),
$$
be a progenerator of $\co_\lam^\mathfrak{p}$. Set $A_\lam^{\mathfrak{p}}:=\End_{\co}(P_{I,\lam})$. Then $\co_\lam^\mathfrak{p}\sim (A_\lam^{\mathfrak{p}})^{\rm{op}}\lmod$. It is well-known that $A_\lam^{\mathfrak{p}}$ is a quasi-hereditary basic algebra equipped with an anti-involution $\ast$ (see \cite[Chapter 1]{Hum}). In particular, $A_\lam^{\mathfrak{p}}\lmod$ has a simple preserving duality and each indecomposable projective over $A_\lam^{\mathfrak{p}}$ can be generated by an $\ast$-fixed primitive idempotent.

A weight $w\cdot\lam\in\Lam_I^+$ is called socular if $L(w\cdot\lam)$ occurs in the socle of some parabolic Verma module $M_I(x\cdot\lam)$. By a result of Irving, $P_I(w\cdot\lam)$ is injective if and only if $w\cdot\lam$ is socular. We define $$
Q_{I,\lam}:=\bigoplus_{\substack{w\in W\\ \text{$w\cdot\lam$ is socular}}}P_I(w\cdot\lam),
$$
Since each $P_I(w\cdot\lam)$ has a standard filtration, each simple module in the socle of  $P_I(w\cdot\lam)$ is labelled by a socular weight. It follows that the injective hull of each  $P_I(w\cdot\lam)$ is a projective-injective module. Thus the basic algebra $A_\lam^{\mathfrak{p}}$ can be embedded into a direct sum of some copies of the basic projective-injective (hence tilting) module $\Hom_{\co}(P_{I,\lam},Q_{I,\lam})$. In particular, $\Hom_{\co}(P_{I,\lam},Q_{I,\lam})$ is a faithful tilting module over $A_\lam^{\mathfrak{p}}$.
Applying Corollary \ref{cor to thm1.9}, we get that $A_\lam^{\mathfrak{p}}$ has the double centralizer property with respect to $\Hom_{\co}(P_{I,\lam},Q_{I,\lam})$. In particular, by Lemma \ref{ff2} the hom functor $\Hom_{\co}(Q_{I,\lam},-)$ is fully faithful on projectives. This recovers earlier results of \cite[Struktursatz 9]{So} and \cite[Theorem 10.1]{Stro1}.
\end{exmp}

Using Theorem \ref{mainthm2}, it is possible to simplify the proof of Schur-Weyl dualities in many non-semisimple situations. General speaking, we have two algebras $A, B$ and an $(A,B)$-bimodule $M$. By a Schur-Weyl duality between $A$ and $B$ on the bimodule $M$ we mean that the following two canonical maps: \begin{equation}\label{ab}
\varphi: A\rightarrow \End_B(M),\quad \psi: B\rightarrow\End_A(M) ,
\end{equation}
are both surjective.

Suppose that there is a Schur-Weyl duality between $A$ and $B$ on the bimodule $M$. That says, both $\varphi$ and $\psi$ are surjective. Then it is obvious that $A$ has the double centralizer property with respect to $M$ and $B$ has the double centralizer property with respect to $M$.

Conversely, if we can show that the image of $\varphi$ in $\End_B(M)$ is a quasi-hereditary algebra with a simple preserving duality, then the surjectivity of
$\varphi$ will follow from the surjectivity of $\psi$ and applying Theorem \ref{mainthm2}. This is because in that case we have $$
A\twoheadrightarrow\im(\varphi)=\End_{\End_{\im(\varphi)}(M)}(M)=\End_{\End_A(M)}(M)=\End_{B}(M) .
$$
This is indeed the case as in many examples of Schur-Weyl dualities, where $A$ often has a highest weight theory with $M$ being a tilting module over $A$, and $B$ is a diagrammatic algebra (symmetric or cellular). It is usually easier to handle the endomorphism algebra $\End_A(M)$ than to handle the endomorphism algebra $\End_B(M)$.

\begin{exmp} Let $G$ be a classical group over an algebraically closed field $K$ with the natural module $V$. Following \cite{G} and \cite[\S2.2]{Do4}, we define $S_r(G):=A_r(G)^\ast$, where $A_r(G)$ is the coefficient space of $V^{\otimes r}$ (which is a coalgebra) in the coordinate algebra $K[G]$. The $K$-algebra $S_r(G)$ is isomorphic to the image of $KG$ in $\End_K(V^{\otimes r})$ and hence acts faithfully on $V^{\otimes r}$. When the set of dominant weights in $V^{\otimes r}$ is saturated in the sense of \cite[A3]{Do3}, then $S_r(G)$ is a generalised Schur algebra. In particular it is quasi-hereditary. If furthermore $V^{\otimes r}$ is a faithful tilting module over $S_r(G)$ then we can apply Theorem \ref{mainthm2}.

In the type $A$ case, let $G=GL(V)$, the general linear group on $V$, and $n:=\dim V$. The Schur-Weyl duality between $KGL(V)$ and the symmetric group algebra $K\Sym_r$ on $V^{\otimes r}$ means that we have the following two surjective homomorphisms: $$
\varphi: KGL(V)\twoheadrightarrow\End_{K\Sym_r}(V^{\otimes r}),\quad \psi: K\Sym_r\twoheadrightarrow\End_{KGL(V)}(V^{\otimes r}) .
$$
In this case, the set of dominant weights in $V^{\otimes r}$ is $$
\Lam^+(n,r)=\{\lam=\lam_1\varepsilon_1+\cdots+\lam_n\varepsilon_n|\sum_{i=1}^n\lam_i=r,\lam_1\geq\lam_2\geq\cdots\geq\lam_n\geq 0, \lam_i\in\Z,\forall\,i\},
$$
which is saturated, $V^{\otimes r}$ is a faithful tilting module over the image of $\varphi$.

In the type $C$ case, let $V$ be a $2m$-dimensional symplectic space, $G=Sp(V)$, the symplectic group on $V$. The Schur-Weyl duality between $KSp(V)$ and the specialized Brauer $\bb_{n,K}(-2m)$ on $V^{\otimes n}$ means that we have the following two surjective homomorphisms: $$
\varphi: KSp(V)\twoheadrightarrow\End_{\bb_{n,K}(-2m)}(V^{\otimes n}),\quad \psi: \bb_{n,K}(-2m)\twoheadrightarrow\End_{KSp(V)}(V^{\otimes n}) .
$$
In this case, the set of dominant weights in $V^{\otimes n}$ is $$
\{\lam=\lam_1\varepsilon_1+\cdots+\lam_m\varepsilon_m|\sum_{i=1}^m\lam_i=n-2f,0\leq f\leq [n/2],\lam_1\geq\lam_2\geq\cdots\geq\lam_m\geq 0, \lam_i\in\Z,\forall\,i\},
$$
which is saturated, $V^{\otimes n}$ is a faithful tilting module over the image of $\varphi$.

In the type $D$ case, we assume $\cha K\neq 2$ and $m\geq 2$. Let $V$ be a $2m$-dimensional orthogonal space, $G=SO(V)$, the special orthogonal group on $V$. In this case, Donkin (\cite[\S2.5]{Do4}) has shown that the set of dominant weights in $V^{\otimes n}$ is again saturated, and $V^{\otimes n}$ is a faithful tilting module over the image of $KSO(V)$ in $\End_K(V^{\otimes n})$. In particular, we see the image has the double centralizer property with respect to $V^{\otimes n}$ by Theorem \ref{mainthm2}. That is, we have the following natural surjective homomorphism: $$
\varphi: KSO(V)\twoheadrightarrow\End_{\End_{KSO(V)}(V^{\otimes n})}(V^{\otimes n}) .
$$
Note that the image of $KO(V)$ in $\End_K(V^{\otimes n})$ is not necessarily equal to the image of $KSO(V)$, and $\End_{KSO(V)}(V^{\otimes n})$ in general does not coincide with $\End_{KO(V)}(V^{\otimes n})$ in this case.

In the type $B$ case, we assume $\cha K\neq 2$ and $m\geq 2$. Let $V$ be a $2m+1$-dimensional orthogonal space, $G=SO(V)$, the special orthogonal group on $V$. In this case, the set of dominant weights in $V^{\otimes n}$ is in general not saturated. Donkin (\cite[\S2.5]{Do4}) has given a sufficient condition in \cite[Page 108,(H)]{Do4} under which the image of $KSO(V)$ in $\End_K(V^{\otimes n})$ is quasi-hereditary. However, in this case, $O(V)$ is generated by $SO(V)$ and an involution $\theta$, and $\theta$ acts as $-\id$ on $V^{\otimes n}$. So the image of $KO(V)$ in $\End_K(V^{\otimes n})$ coincides with the image of $KSO(V)$. Moreover, $\End_{KSO(V)}(V^{\otimes n})=\End_{KO(V)}(V^{\otimes n})$. Thus by \cite[Theorem 1.2]{DH} we can still get the following natural surjective homomorphism: $$
\varphi: KSO(V)\twoheadrightarrow\End_{\End_{KSO(V)}(V^{\otimes n})}(V^{\otimes n}) .
$$
\end{exmp}

\begin{rem} Let $\mathfrak{g}$ be a complex semisimple Lie algebra. Let $U(\mathfrak{g})$ be the universal enveloping algebra of $\mathfrak{g}$ over $\Q$.
Let ${U}_\Z(\mathfrak{g})$ be the Kostant $\Z$-form of $U(\mathfrak{g})$. For any field $K$, we define $U_K(\mathfrak{g}):=K\otimes_{\Z}{U}_\Z(\mathfrak{g})$. The discussion in the above example should also work if we replace $KG$ by $U_K(\mathfrak{g})$, see \cite[\S3]{Do4}. Furthermore, we remark that the argument of \cite[\S3]{Do4} should also work if we replace $U_K(\mathfrak{g})$ with the Lusztig's $\Z[v,v^{-1}]$-form of the Drinfeld-Jimbo quantized enveloping algebra of $\mathfrak{g}$. In that case, the idea of using  Theorem \ref{mainthm2} should be able to simplify the lengthy argument in \cite{Hu2} and to provide a proof of the quantized integral Schur-Weyl dualities in the orthogonal cases as well. Details will be appeared elsewhere.
\end{rem}



Let $\{T(\lam)|\lam\in\Lam^+\}$ be a complete set of pairwise non-isomorphic indecomposable tilting modules over $A$. Recall that the tilting module $\oplus_{\lam\in\Lam^+}T(\lam)$ is called the characteristic tilting module over $A$. By a well-known result of Ringel, we know that $A$ has the double centraliser property with respect to the characteristic tilting module. Note that the characteristic tilting module is a basic tilting module in the sense of the following definition.

\begin{dfn} Let $T\in A\lmod$ be a tilting module. If $T$ is a direct sum of some pairwise non-isomorphic indecomposable tilting modules, then we say that $T$ is a basic tilting module. In general, if $T=\oplus_{\lam\in\Lam^+}T(\lam)^{\oplus r_\lam}$, where $r_\lam\in\N$ for each $\lam$, then we define $$
T_{\rm{basic}}:=\oplus_{\substack{\lam\in\Lam^+\\ r_\lam>0}}T(\lam) .
$$
\end{dfn}

In \cite[Remark 2.5]{MS}, Mazorchuk and Stroppel proposed a question about whether there exists a minimal basic tilting module with respect to which one has the double centraliser property. In the rest of this section we shall give the proof of Theorem \ref{mainthm3}, which affirmatively answers this question.

\medskip
\noindent
{\textbf{Proof of Theorem \ref{mainthm3}}:}  Let $\widetilde{T}:=\oplus_{\lam\in\Lam^+}T(\lam)$ be the characteristic tilting module over $A$. Then $R(A):=\End_A(\widetilde{T})$ is the Ringel dual of $A$, which is again a quasi-hereditary algebra over $K$. We consider the contravariant Ringel dual functor ${\rm R}:=\Hom_A(-,\widetilde{T}): A\lmod\rightarrow\End_A(\widetilde{T})\lmod$ (\cite{R}): $$
M\mapsto {\rm R}(M)=\Hom_A(M,\widetilde{T}),\quad\forall\,M\in A\lmod .
$$
By \cite[Proposition 2.1]{MS}, $\rm{R}$ maps tilting modules to projective modules, and maps projective modules to tilting modules and $\rm{R}$ defines an equivalence of subcategories ${\rm R}: \rf(\Delta^A)\cong\rf(\Delta^{{R}(A)})$, where the $\Delta^A$ means the standard objects in $A\lmod$ and $\Delta^{{R}(A)}$ means the standard objects in ${R}(A)\lmod$.

Let $\widetilde{P}$ be the projective cover of $\widetilde{T}$ in ${R}(A)\lmod$. Then ${\rm R}^{-1}(\widetilde{P})$ is a tilting module in $A\lmod$.
We define $T:=\bigl({\rm R}^{-1}(\widetilde{P})\bigr)_{\rm{basic}}$. Let $r'\in\N$ such that ${\rm R}^{-1}(\widetilde{P})$ is a direct summand of $T^{\oplus r'}$.
Let $\phi: \widetilde{P}^{\oplus r}\twoheadrightarrow\widetilde{T}$ be a surjective homomorphism in ${R}(A)\lmod$. Since $\widetilde{P}^{\oplus r'}, \widetilde{T}\in\rf(\Delta^{{R}(A)})$, it follows that $\Ker\phi\in\rf(\Delta^{{R}(A)})$. Thus $$
0\longrightarrow\Ker\phi\overset{\iota}{\longrightarrow} \widetilde{P}^{\oplus r'}\overset{\pi}{\longrightarrow}\widetilde{T}\longrightarrow 0
$$
is an exact sequence in $\rf(\Delta^{{R}(A)})$. Applying the inverse of the Ringel dual functor $\rm{R}$, we get that $$
0\longrightarrow A\overset{{\rm R}^{-1}(\pi)}{\longrightarrow} ({\rm R}^{-1}(\widetilde{P}))^{\oplus r'}\overset{{\rm R}^{-1}(\iota)}{\longrightarrow}{\rm R}^{-1}(\Ker\phi)\longrightarrow 0 ,
$$
is an exact sequence in $\rf(\Delta^A)$.

By definition, $T:=\bigl({\rm R}^{-1}(\widetilde{P})\bigr)_{\rm{basic}}$. Thus there exist $N\in A\lmod$ and $r'\leq r\in\N$, such that $$
({\rm R}^{-1}(\widetilde{P}))^{\oplus r'}\oplus N=T^{\oplus r}
$$
We use $\iota_0: A\hookrightarrow T^{\oplus r}$ to denote the composition of ${\rm R}^{-1}(\pi)$ with the natural injection $({\rm R}^{-1}(\widetilde{P}))^{\oplus r'}\hookrightarrow T^{\oplus r}$.
Thus $T^{\oplus r}$ and hence $T$ must be faithful tilting modules over $A$. Now applying Theorem \ref{mainthm2} and Remark \ref{rem1}, we can deduce that $A=\End_{\End_A(T)}(T)$.

Now suppose that $T'$ is another faithful tilting module satisfying $A=\End_{\End_A(T')}(T')$. Let $s\in\N$ such that $A\hookrightarrow (T')^{\oplus s}$.
Applying Proposition \ref{keyprop1}, we can get an exact sequence in $\rf(\Delta^A)$: $$
0\rightarrow A\hookrightarrow (T')^{\oplus s}\twoheadrightarrow  (T')^{\oplus s}/A\rightarrow 0 .
$$
Applying the Ringel dual functor $\rm{R}$, we get that $$
0\rightarrow{\rm R}((T')^{\oplus s}/A)\hookrightarrow {\rm R}(T')^{\oplus s}\overset{\text{$h$}}{\twoheadrightarrow}\widetilde{T} \rightarrow 0,
$$
is an exact sequence in $\rf(\Delta^{{R}(A)})$.

Note that ${\rm R}(T')$ is a projective module in ${\rm R}(A)\lmod$. The surjectivity of $h$ in the above exact sequence implies that ${\rm R}(T')^{\oplus s}$ must contains $\widetilde{P}$ as its direct summand. That says, ${\rm R}(T')^{\oplus s}\cong\widetilde{P}\oplus\widetilde{P}'$. Applying the inverse of the Ringel dual functor $\rm{R}$, we get that $(T')^{\oplus s}\cong{\rm R}^{-1}(\widetilde{P})\oplus{\rm R}^{-1}(\widetilde{P}')$. By construction, $T:=\bigl({\rm R}^{-1}(\widetilde{P})\bigr)_{\rm{basic}}$. This implies that $T$ must be isomorphic to a direct summand of $T'$. This completes the proof of the theorem.
\medskip

\begin{cor}\label{remcor} Let $A$ be a quasi-hereditary algebra with a simple preserving duality. Then there exists a unique minimal faithful basic tilting module $T\in A\lmod$ such that any other faithful tilting module $T'\in A\lmod$ must have $T$ as a direct summand.
\end{cor}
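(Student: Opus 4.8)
The plan is to read this corollary off from Theorem \ref{mainthm2} and Theorem \ref{mainthm3} with essentially no extra work, the only observation being that for the class of algebras under consideration ``faithful tilting'' already forces the double centralizer property. First I would take $T$ to be the faithful basic tilting module furnished by Theorem \ref{mainthm3}, so that $A=\End_{\End_A(T)}(T)$ and every faithful tilting module $T'$ satisfying $A=\End_{\End_A(T')}(T')$ has $T$ as a direct summand.

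The point I would emphasize is that the hypothesis ``$A=\End_{\End_A(T')}(T')$'' appearing in part (2) of Theorem \ref{mainthm3} is not an extra assumption at all in the present setting: since $A$ is quasi-hereditary with a simple preserving duality, Theorem \ref{mainthm2} shows that \emph{every} faithful tilting module $T'\in A\lmod$ automatically has the double centralizer property with respect to $A$. Hence part (2) of Theorem \ref{mainthm3} applies to an arbitrary faithful tilting module $T'$, and we conclude that $T$ is a direct summand of $T'$. Since $T$ is itself faithful (by Theorem \ref{mainthm3}(1)) and basic, it is a faithful basic tilting module enjoying exactly the stated universal property; being a direct summand of every faithful tilting module is precisely what ``minimal'' means here.

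For uniqueness I would argue directly: if $T_1$ is another faithful basic tilting module such that every faithful tilting module has $T_1$ as a direct summand, then applying this to $T'=T$ shows that $T_1$ is a direct summand of $T$, while applying the universal property of $T$ to $T'=T_1$ (legitimate, since $T_1$ is a faithful tilting module, hence has the double centralizer property by Theorem \ref{mainthm2}) shows that $T$ is a direct summand of $T_1$. As both modules are basic, i.e.\ multiplicity-free direct sums of indecomposable tilting modules, mutual divisibility forces $T\cong T_1$. I do not expect any genuine obstacle: the entire content sits in Theorems \ref{mainthm2} and \ref{mainthm3}, and this corollary is just the observation that, under a simple preserving duality, the poset of faithful basic tilting modules ordered by ``is a direct summand of'' has a (necessarily unique) least element.
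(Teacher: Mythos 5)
Your proof is correct and follows the same route the paper intends: the corollary is immediate from Theorems \ref{mainthm2} and \ref{mainthm3}, since Theorem \ref{mainthm2} makes the double centralizer hypothesis in Theorem \ref{mainthm3}(2) automatic for any faithful tilting module, and your uniqueness argument by mutual direct-summand containment of basic modules is the standard observation.
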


\begin{rem}\label{add-rem}
In fact, the same argument can be used to show that Theorems \ref{mainthm2} and \ref{mainthm3} are true for properly stratified algebra $A$ which has a simple preserving duality and that every tilting $A$-module is cotilting.
\end{rem}

\bigskip\bigskip
\section{Brauer-Schur-Weyl duality for dual partially harmonic spaces}\label{xxsec2}

In this section, we shall apply the results in last Section to the study of Brauer-Schur-Weyl duality for dual partially harmonic spaces.


The notion of Brauer algebra was first introduced in \cite{Brauer} when Richard Brauer studied the decomposition of symplectic tensor spaces and orthogonal tensor spaces into direct sums of irreducible modules. Since then there have been a lot of study on the structure and representation of Brauer algebras, see \cite{CVM1, CVM2, DoranWH,
HK1, HK2, HP, Martin, RW, Rui, Wenzl} and references therein.
In this section we only concern about these Brauer algebras with special parameters which play a role in the setting of Brauer-Schur-Weyl duality of type $C$.
Let $m,n\in\Z^{\geq 1}$. The Brauer algebra $\mathfrak{B}_{n,\Z}=\bb_{n}(-2m)_{\Z}$ with parameter $-2m$ over $\Z$ is a unital associative $\Z$-algebra with generators
$s_1,\ldots,s_{n-1},e_1,\ldots,e_{n-1}$ and relations (see \cite{En}):
$$
s_i^2=1,\hspace{12pt} e_i^2=(-2m)e_i,\hspace{12pt}
e_is_i=s_ie_i=e_i,\hspace{12pt} \forall\ 1\leq i\leq n-1,
$$
$$
s_is_j=s_js_i,\hspace{8pt} s_ie_j=e_js_i,\hspace{8pt}
e_ie_j=e_je_i,\hspace{8pt} \forall\ 1\leq i<j-1\leq n-2,
$$
$$
s_is_{i+1}s_i=s_{i+1}s_is_{i+1},\hspace{6pt}
e_ie_{i+1}e_i=e_i,\hspace{6pt}
e_{i+1}e_ie_{i+1}=e_{i+1},\hspace{6pt} \forall\ 1\leq i\leq n-2,
$$
$$
s_ie_{i+1}e_i=s_{i+1}e_i,\hspace{8pt}
e_{i+1}e_is_{i+1}=e_{i+1}s_i,\hspace{8pt} \forall\ 1\leq i\leq n-2.
$$
It is well-known that $\mathfrak{B}_{n,\Z}$ is a free $\Z$-module of rank $(2n-1)!!=(2n-1)\cdot(2n-3)\cdots 3\cdot 1$. For any field $K$, we define
$\bb_{n,K}:=K\otimes_{\Z}\bb_{n,\Z}$.

Alternatively, the Brauer algebra $\mathfrak{B}_{n,K}$ can be defined in a diagrammatic manner \cite{Brauer}.
Recall that a Brauer $n$-diagram is a graph with $2n$ vertices arranged in two rows (each of $n$ vertices) and $n$ edges such that each
vertex is incident to exactly one edge. Then $\bb_{n,K}$ can be defined as the $K$-linear space with basis the set
${\rm Bd}_n$ of all the Brauer $n$-diagrams. The multiplication of two Brauer $n$-diagrams $D_1$ and $D_2$ is defined by the concatenation of $D_1$ and $D_2$ as
follows: placing $D_1$ above $D_2$, identifying the vertices in the bottom row of $D_1$ with the vertices in the top row of $D_2$, removing
the interior loops in the concatenation and obtaining the composite Brauer $n$-diagram $D_1\circ D_2$, writing $n(D_1,D_2)$ the number of interior loops,
we then define the multiplication $D_1\cdot D_2:=(-2m)^{n(D_1,D_2)}D_1\circ D_2$.

For a Brauer $n$-diagram, we label the vertices in the top row by $1,2,\ldots,n$ from
left to right and the vertices in the bottom row by $\overline{1},\overline{2},\ldots,\overline{n}$ also from left
to right. The two definitions of Brauer algebra $\bb_{n,K}$ can be identified as follows:
\begin{align*} 
\begin{matrix}
\begin{tikzpicture}
\node at (-7.4,-0.5) {$\displaystyle {s_i}= \color{black}$};
\node at (-6.3,-0.4) {$\displaystyle \cdots \color{black}$};
\draw (-6.8,0) -- (-6.8,-0.8);
\draw (-5.8,0) -- (-5.8,-0.8);
\draw (-5,0) -- (-4.2,-0.8);
\draw (-4.2,0) -- (-5,-0.8);
\draw (-3.4,0) -- (-3.4,-0.8);
\draw (-2.4,0) -- (-2.4,-0.8);
\filldraw [black] (-6.8,0) circle (1.2pt);
\filldraw [black] (-6.8,-0.8) circle (1.2pt);
\filldraw [black] (-5.8,0) circle (1.2pt);
\filldraw [black] (-5.8,-0.8) circle (1.2pt);
\filldraw [black] (-5.0,0) circle (1.2pt);
\filldraw [black] (-5.0,-0.8) circle (1.2pt);
\filldraw [black] (-4.2,0) circle (1.2pt);
\filldraw [black] (-4.2,-0.8) circle (1.2pt);
\filldraw [black] (-3.4,0) circle (1.2pt);
\filldraw [black] (-3.4,-0.8) circle (1.2pt);
\filldraw [black] (-2.4,0) circle (1.2pt);
\filldraw [black] (-2.4,-0.8) circle (1.2pt);
\node at (-2.9,-0.4) {$\displaystyle \cdots \color{black}$};
\node at (-5.0,-1.1) {\scalefont{0.8}$\displaystyle \overline{i} \color{black}$};
\node at (-5.0,0.3) {\scalefont{0.8}$\displaystyle i \color{black}$};
\node at (-4.2,0.3) {\scalefont{0.8}$\displaystyle i+1 \color{black}$};
\node at (-4.2,-1.1) {\scalefont{0.8}$\displaystyle \overline{i+1} \color{black}$};
\node at (-1.4,-0.4) {$\displaystyle \text{and} \color{black}$};
\node at (-0.3,-0.5) {$\displaystyle e_i= \color{black}$};
\node at (0.8,-0.4) {$\displaystyle \cdots \color{black}$};
\draw (0.3,0) -- (0.3,-0.8);
\draw (1.3,0) -- (1.3,-0.8);
\draw (2.1,0)  .. controls (2.3,-0.2) and (2.7,-0.2) .. (2.9,0);
\draw (2.1,-0.8)  .. controls (2.3,-0.6) and (2.7,-0.6) .. (2.9,-0.8);
\draw (3.7,0) -- (3.7,-0.8);
\draw (4.7,0) -- (4.7,-0.8);
\filldraw [black] (0.3,0) circle (1.2pt);
\filldraw [black] (0.3,-0.8) circle (1.2pt);
\filldraw [black] (1.3,0) circle (1.2pt);
\filldraw [black] (1.3,-0.8) circle (1.2pt);
\filldraw [black] (2.1,0) circle (1.2pt);
\filldraw [black] (2.1,-0.8) circle (1.2pt);
\filldraw [black] (2.9,0) circle (1.2pt);
\filldraw [black] (2.9,-0.8) circle (1.2pt);
\filldraw [black] (3.7,0) circle (1.2pt);
\filldraw [black] (3.7,-0.8) circle (1.2pt);
\filldraw [black] (4.7,0) circle (1.2pt);
\filldraw [black] (4.7,-0.8) circle (1.2pt);
\node at (4.2,-0.4) {$\displaystyle \cdots \color{black}$};
\node at (2.1,-1.1) {\scalefont{0.8}$\displaystyle \overline{i} \color{black}$};
\node at (2.1,0.3) {\scalefont{0.8}$\displaystyle i \color{black}$};
\node at (2.9,0.3) {\scalefont{0.8}$\displaystyle i+1 \color{black}$};
\node at (2.9,-1.1) {\scalefont{0.8}$\displaystyle \overline{i+1} \color{black}$};
\end{tikzpicture}
\end{matrix}
\end{align*}


Let $K$ be an infinite field. By \cite{Brauer, DP, DDH}, there is a Brauer-Schur-Weyl duality between the symplectic group $Sp_{2m}(K)$ and the Brauer algebra $\bb_{n,K}$ on certain tensor space. To recall the result we need some more notations. Let $V_\Z$ be a free $\Z$-module of rank $2m$.  For each integer
$1\leq i\leq 2m$, set $i':=2m+1-i$. Let $\{v_i\}_{i=1}^{2m}$ be a $\Z$-basis of $V$. Let $\langle\ ,\ \rangle$ be a skew symmetric bilinear form  on $V_\Z$ such that
$$\langle v_i, v_{j}\rangle=0=\langle v_{i'},
v_{j'}\rangle,\,\,\,\langle v_i, v_{j'}\rangle=\delta_{ij}=-\langle
v_{j'}, v_{i}\rangle,\quad\forall\,\,1\leq i, j\leq m. $$
For each integer $1\leq i\leq 2m$, we define
\begin{equation}\label{dualbases} v_i^{\ast}=\begin{cases}
v_{i'}, &\text{if $1\leq i\leq m$;}\\
-v_{i'}, &\text{if $m+1\leq i\leq 2m$.}
\end{cases}
\end{equation}
Then $\{v_i\}_{i=1}^{2m}$ and $\{v_{j}^{\ast}\}_{j=1}^{2m}$ are dual
bases of $V_\Z$ in the sense that $\langle v_i,
v_j^{\ast}\rangle=\delta_{ij}$ for any $i,j$. We define $V:=K\otimes_{\Z}V_\Z$ and abbreviate $1_K\otimes_{\Z}v_i$ by $v_i$ for each $1\leq i\leq 2m$.
There is a natural right action of $\bb_{n,K}$ on $V^{\otimes n}$ which is defined on generators by
$$\begin{aligned} (v_{i_1}\otimes\cdots\otimes v_{i_n})s_j
&:=-(v_{i_1}\otimes\cdots\otimes v_{i_{j-1}}\otimes
v_{i_{j+1}}\otimes v_{i_{j}}\otimes v_{i_{j+2}}
\otimes\cdots \otimes v_{i_n}),\\
(v_{i_1}\otimes\cdots\otimes v_{i_n})e_j
&:=\epsilon_{i_j,i_{j+1}}v_{i_1}\otimes\cdots\otimes v_{i_{j-1}}\otimes
\biggl(\sum_{k=1}^{2m}v_{k}^{\ast}\otimes v_k\biggr)\otimes
v_{i_{j+2}}\otimes\cdots \otimes v_{i_n},\end{aligned}
$$
where for any $i, j\in\bigl\{1,2,\cdots,2m\bigr\}$, $$
\epsilon_{i,j}:=\begin{cases} 1 &\text{if $j=i'$ and $i<j$,}\\
-1 &\text{if $j=i'$ and $i>j$,}\\
0 &\text{otherwise.}\end{cases}
$$
The above right action of $\bb_{n,K}$ on $V^{\otimes n}$ commutes with the natural left diagonal action of the symplectic group $Sp(V)\cong Sp_{2m}(K)$.

Let $k\in\N$. A partition of $k$ is a non-increasing sequence of non-negative integers $\lam=(\lam_1,\lam_2,\cdots)$ which sum to $k$. We write $\lam\vdash k$.
If $\lam\vdash k$ then we set $\ell(\lam):=\max\{t\geq 1|\lam_t\neq 0\}$. The following results are often referred as Brauer-Schur-Weyl duality of type $C$.

\begin{thm}\text{(\cite{Brauer, DP, DDH})}\label{ddh} Assume $K$ is an algebraically closed field. The following two natural homomorphisms are both surjective: $$
\varphi_K: (\bb_{n,K})^{\op}\rightarrow\End_{KSp(V)}(V^{\otimes n}),\quad\,\,
\psi_K: KSp(V)\rightarrow\End_{\bb_{n,K}}(V^{\otimes n}) .
$$
If $m\geq n$ then $\varphi_K$ is an isomorphism. Furthermore, if $K=\C$, then there is a $(\C Sp(V),\bb_{n,\C})$-bimodule decomposition: $$
V^{\otimes n}=\bigoplus_{f=0}^{[n/2]}\bigoplus_{\substack{\lam\vdash n-2f\\ \ell(\lam)\leq m}}\Delta(\lam)\otimes D(f,\lam),
$$
where $\Delta(\lam)$ and $D(f,\lam)$ denote the irreducible $\C Sp(V)$-module corresponding to $\lam$ and the irreducible $\bb_{n,\C}$-module corresponding to $(f,\lam)$ respectively.
\end{thm}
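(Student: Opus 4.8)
The plan is to follow the classical strategy that goes back to Brauer and was put on firm footing by De Concini--Procesi and Dipper--Doty--Hu, reducing everything to invariant theory for the symplectic group. First I would prove the surjectivity of $\varphi_K\colon (\bb_{n,K})^{\op}\to\End_{KSp(V)}(V^{\otimes n})$. The key input is the First Fundamental Theorem of invariant theory for $Sp(V)$ acting diagonally on $(V^*)^{\otimes n}\otimes V^{\otimes n}\cong\End_K(V^{\otimes n})$: every $Sp(V)$-invariant multilinear map is a $K$-linear combination of the ``contraction'' maps obtained by pairing tensor slots via the symplectic form and permuting, and each such contraction is exactly the image under $\varphi_K$ of a Brauer diagram. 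Over an algebraically closed field of arbitrary characteristic this FFT is available (De Concini--Procesi in characteristic zero; the general case via Donkin / good filtration arguments or the reference \cite{DDH}), so $\varphi_K$ is surjective. For the isomorphism statement when $m\geq n$, I would compare dimensions: $\dim\bb_{n,K}=(2n-1)!!$, and the Second Fundamental Theorem says the only relations among contractions come from the vanishing of an antisymmetrized expression involving $m+1$ copies of the form, which is vacuous once $m\geq n$; hence $\varphi_K$ is injective as well.

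Next I would establish the surjectivity of $\psi_K\colon KSp(V)\to\End_{\bb_{n,K}}(V^{\otimes n})$. Here the standard device is to pass to the general linear picture: by the FFT for $GL(V)$ (again valid over any field after taking the Schur algebra, i.e.\ the image of $KGL(V)$ in $\End_K(V^{\otimes n})$ is the classical Schur algebra $S(2m,n)$), the commutant of $KSp(V)$ inside $\End_K(V^{\otimes n})$ is generated by the image of $KGL(V)$ together with the symplectic contraction operators. The contraction operators all lie in the image of $\varphi_K$, hence already commute with $\bb_{n,K}$, so the real content is that the image of $KGL(V)$ intersected with $\End_{\bb_{n,K}}(V^{\otimes n})$ is spanned by the image of $KSp(V)$. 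This follows from the fact that $Sp(V)$-invariants inside the coordinate algebra $K[GL(V)]$, restricted to the degree-$n$ coefficient space, are exactly $K[Sp(V)]$ in that degree; dualizing gives the surjectivity of $\psi_K$. Alternatively one can invoke the saturated-weight formalism: $V^{\otimes n}$ is a tilting module over the symplectic Schur algebra $S^{sy}(m,n)=\im\psi_K$ and one applies a double-centralizer argument as in the examples of Section~3.

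Finally, the $(\C Sp(V),\bb_{n,\C})$-bimodule decomposition over $\C$ is the semisimple case: both $\C Sp(V)$ (as an algebraic group, via its category of rational modules) and $\bb_{n,\C}(-2m)$ act semisimply on $V^{\otimes n}$ — semisimplicity of $\bb_{n,\C}(-2m)$ holds because $-2m\notin\{0\}\cup\{$bad integers$\}$ in the relevant range, or one cites Wenzl. Then the double-commutant theorem for semisimple algebras, combined with the surjectivity of $\varphi_\C$ and $\psi_\C$ just proved, gives a bijection between the irreducible constituents and the multiplicity-free decomposition; identifying the $Sp(V)$-constituent of highest weight $\lambda$ with $\Delta(\lambda)$ for $\lambda\vdash n-2f$, $\ell(\lambda)\leq m$, and the corresponding $\bb_{n,\C}$-module with $D(f,\lambda)$, is a matter of matching highest weights against the combinatorics of up-down tableaux. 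The main obstacle is the characteristic-free First Fundamental Theorem for $Sp(V)$: in positive characteristic one cannot simply average, so one must either cite the good-filtration / Schur-algebra machinery of Donkin and \cite{DDH} or argue via a $\Z$-form and base change, checking that the relevant spanning set reduces well modulo $p$. Everything else is either dimension counting or a formal consequence of the double-centralizer/semisimplicity formalism.
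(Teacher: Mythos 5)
The paper does not prove this theorem: it is quoted verbatim from the literature with the citations \cite{Brauer}, \cite{DP}, \cite{DDH}, so there is no in-paper argument to compare yours against. Your outline is essentially the route taken by those references (FFT/SFT for $Sp(V)$ for the statement about $\varphi_K$, the symplectic Schur algebra and tilting/good-filtration machinery of \cite{DDH} for the surjectivity of $\psi_K$ in positive characteristic, and double-commutant theory over $\C$), and as a sketch it is acceptable, with the honest caveat you yourself flag that the surjectivity of $\psi_K$ over a field of positive characteristic is the genuinely hard step and your paragraph on it is closer to a pointer to \cite{DDH} than to a proof.

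One genuine error to correct: in the last paragraph you assert that $\bb_{n,\C}(-2m)$ is semisimple because $-2m$ avoids the ``bad integers.'' This is false in the relevant range; for $m$ small compared with $n$ the parameter $-2m$ is exactly one of the degenerate integral values and $\bb_{n,\C}(-2m)$ is in general not semisimple (and does not even act faithfully on $V^{\otimes n}$ unless $m\geq n$). What is true, and all that the bimodule decomposition requires, is that $V^{\otimes n}$ is a semisimple $\C Sp(V)$-module, hence $\End_{\C Sp(V)}(V^{\otimes n})$ --- which by the surjectivity of $\varphi_\C$ is the \emph{image} of $\bb_{n,\C}$ --- is a semisimple algebra. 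The double-commutant theorem then gives the multiplicity-free decomposition, and the restriction $\ell(\lam)\leq m$ in the index set records precisely which simple $\bb_{n,\C}$-modules $D(f,\lam)$ survive in that image; if the Brauer algebra itself were semisimple and acted faithfully, no such restriction would appear. Rephrase that step accordingly.
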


\begin{dfn} We call the endomorphism algebra $S_K^{sy}(m,n):=\End_{\bb_{n,K}}(V^{\otimes n})$ the symplectic Schur algebra.
\end{dfn}

By \cite{Do1, Do2, Oe}, we know that the symplectic Schur algebra is a quasi-hereditary algebra over $K$. Applying Theorem \ref{ASthm}, we can get the following corollary.

\begin{cor}\label{sym1} There exists an injective left $\add(V^{\otimes n})$-approximation $0\rightarrow S_K^{sy}(m,n)\overset{\delta}{\rightarrow}(V^{\otimes n})^{\oplus r}$, where $r\in\N$, which can be continued to an exact sequence
$0\rightarrow S_K^{sy}(m,n)\overset{\delta}{\rightarrow}(V^{\otimes n})^{\oplus r}\overset{\varepsilon}{\rightarrow} (V^{\otimes n})^{\oplus s}$ for some $s\in\N$. In particular, the $V^{\otimes n}$-dominant dimension of $S_K^{sy}(m,n)$ is at least two.
\end{cor}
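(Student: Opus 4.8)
The plan is to recognize Corollary~\ref{sym1} as the translation, via Theorem~\ref{ASthm}, of the double centralizer property of $S_K^{sy}(m,n)$ with respect to $V^{\otimes n}$, and to obtain that property from Theorem~\ref{mainthm2}. First I would observe that $V^{\otimes n}$ is a \emph{faithful} $S_K^{sy}(m,n)$-module, which is immediate from the definition $S_K^{sy}(m,n)=\End_{\bb_{n,K}}(V^{\otimes n})\subseteq\End_K(V^{\otimes n})$. Next I would assemble the standard structural facts about the symplectic Schur algebra (\cite{Do1,Do2,Oe}, together with the $q$-analogue discussed in the example preceding this corollary): $S_K^{sy}(m,n)$ is quasi-hereditary; it is cellular with an anti-involution whose associated contravariant functor is a \emph{simple preserving} duality, the point being that the cellular Gram forms are symmetric, so that $L(\lambda)^\circ\cong L(\lambda)$ for every $\lambda$; and $V^{\otimes n}$ is a tilting module over $S_K^{sy}(m,n)$, since it carries both a Weyl filtration and a dual Weyl filtration by Donkin's good-filtration results. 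Thus the pair $(A,T)=(S_K^{sy}(m,n),V^{\otimes n})$ satisfies the hypotheses of Theorem~\ref{mainthm2}, which yields that $S_K^{sy}(m,n)$ has the double centralizer property with respect to $V^{\otimes n}$; since $V^{\otimes n}$ is faithful, the canonical map $S_K^{sy}(m,n)\to\End_{\End_{\bb_{n,K}}(V^{\otimes n})}(V^{\otimes n})$ is also injective, hence an isomorphism.

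With this isomorphism established, Theorem~\ref{ASthm} (the implication that an isomorphism $A\cong\End_{\End_A(T)}(T)$ forces a two-term $\add(T)$-coresolution) directly supplies $r,s\in\N$, an injective left $\add(V^{\otimes n})$-approximation $0\to S_K^{sy}(m,n)\overset{\delta}{\to}(V^{\otimes n})^{\oplus r}$, and its continuation to an exact sequence $0\to S_K^{sy}(m,n)\overset{\delta}{\to}(V^{\otimes n})^{\oplus r}\overset{\varepsilon}{\to}(V^{\otimes n})^{\oplus s}$. One may note that the approximation property is automatic anyway: by Remark~\ref{rem1}(1), since $K$ is a field and $S_K^{sy}(m,n)$ is finite dimensional, every $S_K^{sy}(m,n)$-homomorphism from $S_K^{sy}(m,n)$ to an object of $\add(V^{\otimes n})$ is a left $\add(V^{\otimes n})$-approximation, so only the injectivity of $\delta$ and the existence of the continuation $\varepsilon$ genuinely require input from Theorem~\ref{ASthm}. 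Finally, the exact sequence just produced has both of its nonzero terms after $S_K^{sy}(m,n)$ lying in $\add(V^{\otimes n})$, which is precisely the defining condition for the $V^{\otimes n}$-dominant dimension of $S_K^{sy}(m,n)$ to be at least two; this gives the last assertion.

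There is no serious computation in this argument — it is a matter of matching hypotheses. The point that warrants the most care (rather than being a true obstacle) is the verification of the inputs to Theorem~\ref{mainthm2}: one must ensure that the duality on $S_K^{sy}(m,n)$ is simple preserving and not merely a cellular anti-involution, which is where the symmetry of the cellular forms (equivalently, the explicit contravariant duality constructed in \cite{Oe}) is used, and one must invoke Donkin's results to know that $V^{\otimes n}$ is a full tilting module rather than only $\Delta$- or $\nabla$-filtered. I would state both with references rather than reprove them. As an aside, one could instead feed the pair $(S_K^{sy}(m,n),V^{\otimes n})$ into Theorem~\ref{mainthm1}, or combine Proposition~\ref{keyprop1} with Theorem~\ref{ASthm} directly, but routing through Theorem~\ref{mainthm2} is the most economical.
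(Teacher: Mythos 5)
Your proposal is correct, but it reaches the key input — the double centralizer property of $S_K^{sy}(m,n)$ on $V^{\otimes n}$ — by a different route than the paper's primary one. The paper obtains that property essentially for free from the Brauer--Schur--Weyl duality already quoted as Theorem \ref{ddh} (and its integral refinement from \cite{Hu2}): since $\psi_K$ is surjective, $\End_{S_K^{sy}(m,n)}(V^{\otimes n})=\End_{KSp(V)}(V^{\otimes n})$, and the surjectivity of $\varphi_K$ identifies the latter with the image of $\bb_{n,K}^{\rm{op}}$, whence $\End_{\End_{S}(V^{\otimes n})}(V^{\otimes n})=\End_{\bb_{n,K}}(V^{\otimes n})=S_K^{sy}(m,n)$; Theorem \ref{ASthm} then converts this isomorphism into the two-term $\add(V^{\otimes n})$-coresolution exactly as you do in your second paragraph. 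The paper also records a second route, namely Theorem \ref{mainthm1} combined with Stokke's embeddings $\Delta(\lam)\hookrightarrow V^{\otimes n}$ \cite{Sto}. Your route through Theorem \ref{mainthm2} is a third option, and it is the one the paper itself uses in its example on the symplectic $q$-Schur algebra (specializing $q$ to $1$), so it is certainly in the spirit of the paper. The trade-off is as you identify it: the paper's argument leans on the external, and rather deep, surjectivity of $\varphi_K$ but needs nothing beyond the definition of $S_K^{sy}(m,n)$; your argument is internal to the paper's main theorems but requires you to certify that the cellular anti-involution of \cite{Oe} induces a simple preserving duality and that $V^{\otimes n}$ is a genuine tilting module (both Weyl- and good-filtered) by Donkin's results — standard facts, correctly flagged and correctly sourced in your write-up. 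Your observation that the approximation property of $\delta$ is automatic over a field by Remark \ref{rem1}(1) is also accurate.
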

Alternatively, the above corollary can also be deduced as a direct consequence of Theorem \ref{mainthm1} where Stokke has proved in \cite{Sto} that each Weyl module $\Delta(\lam)$ can be embedded into $V^{\otimes n}$.

There is another version of Brauer-Schur-Weyl duality for dual partially harmonic tensors which was investigated in \cite{Hu3}. Henceforth, we assume that $K$ is an algebraically closed field unless otherwise stated. For each integer $f$ with $0\leq f\leq [n/2]$, let $\mathfrak{B}^{(f)}_{n,K}$ be the two-sided ideal of $\bb_{n,K}$ generated by
$e_1e_3\cdots e_{2f-1}$. By convention, $\mathfrak{B}^{(0)}_{n,K}=\bb_{n,K}$ and
$\mathfrak{B}^{([n/2]+1)}_{n,K}=0$. This gives rise to a two-sided ideals filtration of $\bb_{n,K}$ as follows:
$$
\bb_{n,K}=\mathfrak{B}^{(0)}_{n,K}\supset \mathfrak{B}^{(1)}_{n,K} \supset\cdots
\supset \mathfrak{B}^{([n/2])}_{n,K} \supset 0.
$$

Set $$
\mathcal{HT}_f^{\otimes n}:=\bigl\{v\in V^{\otimes n}\bb_{n,K}^{(f)}\bigm|vx=0,\,\,\forall\,x\in\bb_{n,K}^{(f+1)}\bigr\} .
$$
This space is called (cf. \cite{GW}, \cite{Mali}) the space of partially harmonic tensors of valence $f$ and plays an important role in the study of invariant theory of symplectic groups. It was proved in \cite[1.6]{Hu3} that there is a $(KSp(V),\bb_{n,K}/\bb_{n,K}^{(f+1)})$-bimodule isomorphism \begin{equation}\label{biIso}
\bigl(\mathcal{HT}_f^{\otimes n}\bigr)^*\cong V^{\otimes n}\bb_{n,K}^{(f)}/V^{\otimes n}\bb_{n,K}^{(f+1)},
\end{equation}
and the dimension of $V^{\otimes n}\bb_{n,K}^{(f)}/V^{\otimes n}\bb_{n,K}^{(f+1)}$ is independent of the ground field $K$. For this reason, we call any element in $V^{\otimes n}\bb_{n,K}^{(f)}/V^{\otimes n}\bb_{n,K}^{(f+1)}$ the dual partially harmonic tensor. The natural left action of $KSp(V)$ on $V^{\otimes n}/V^{\otimes n}\bb_{n,K}^{(f)}$ commutes with the right action of $\bb_{n,K}/\bb_{n,K}^{(f)}$ on $V^{\otimes n}/V^{\otimes n}\bb_{n,K}^{(f)}$. Thus we have two natural algebra homomorphisms: $$\begin{aligned}
&\varphi_{f,K}: (\bb_{n,K}/\bb_{n,K}^{(f)})^{\rm{op}}\rightarrow\End_{KSp(V)}(V^{\otimes n}/V^{\otimes n}\bb_{n,K}^{(f)}),\\
&\psi_{f,K}: KSp(V)\rightarrow\End_{\bb_{n,K}/\bb_{n,K}^{(f)}}(V^{\otimes n}/V^{\otimes n}\bb_{n,K}^{(f)}) .
\end{aligned}
$$

\begin{thm}\text{(\cite[1.8]{Hu3})}\label{varphif}  Assume $K$ is an algebraically closed field. Let $0\leq f\leq [n/2]$ be an integer. Then $\dim\End_{KSp(V)}(V^{\otimes n}/V^{\otimes n}\bb_{n,K}^{(f)})$ is independent of the characteristic $K$. Moreover, the natural homomorphism $\varphi_{f,K}$ is surjective.
\end{thm}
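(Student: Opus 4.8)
The plan is to derive the surjectivity of $\varphi_{f,K}$ from the classical Brauer--Schur--Weyl duality (Theorem \ref{ddh}) by a lifting argument, and then to read off the dimension statement from a $\Hom$-dimension computation that is visibly characteristic-free. Write $Q_K:=V^{\otimes n}/V^{\otimes n}\bb_{n,K}^{(f)}$ and $W_K:=V^{\otimes n}\bb_{n,K}^{(f)}$, so that there is a short exact sequence of $(KSp(V),\bb_{n,K}/\bb_{n,K}^{(f)})$-bimodules
$$0\longrightarrow W_K\longrightarrow V^{\otimes n}\overset{\pi}{\longrightarrow} Q_K\longrightarrow 0 ,$$
in which $W_K$ is an $Sp(V)$-submodule because the two actions commute. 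Since $\varphi_K$ is surjective by Theorem \ref{ddh}, every $KSp(V)$-endomorphism of $V^{\otimes n}$ is right multiplication by an element of $\bb_{n,K}$, and as $\bb_{n,K}^{(f)}$ is a two-sided ideal such an endomorphism maps $W_K$ into itself; hence it induces an endomorphism of $Q_K$, giving a restriction homomorphism $\rho\colon\End_{KSp(V)}(V^{\otimes n})\to\End_{KSp(V)}(Q_K)$. Chasing the definitions (the composite of $\varphi_{f,K}$ with $(\bb_{n,K})^{\op}\twoheadrightarrow(\bb_{n,K}/\bb_{n,K}^{(f)})^{\op}$ equals $\rho\circ\varphi_K$, and $\varphi_K$ is onto) shows $\im\varphi_{f,K}=\im\rho$, so the surjectivity of $\varphi_{f,K}$ reduces to that of $\rho$: every $KSp(V)$-endomorphism of $Q_K$ must lift to one of $V^{\otimes n}$.

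To prove $\rho$ is surjective, given $\psi\in\End_{KSp(V)}(Q_K)$ I would lift $\psi\circ\pi\colon V^{\otimes n}\to Q_K$ to some $\widetilde\psi\in\End_{KSp(V)}(V^{\otimes n})$ with $\pi\circ\widetilde\psi=\psi\circ\pi$; then $\widetilde\psi(W_K)\subseteq\Ker\pi=W_K$ automatically and $\widetilde\psi$ induces $\psi$, so $\rho(\widetilde\psi)=\psi$. Applying $\Hom_{KSp(V)}(V^{\otimes n},-)$ to the short exact sequence, such a lift exists as soon as $\Ext^1_{KSp(V)}(V^{\otimes n},W_K)=0$. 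Now $V^{\otimes n}$ is a tilting module over $KSp(V)$ (classically; this is the ambient fact behind Corollary \ref{sym1}, where one uses that each Weyl module $\Delta(\lam)$ embeds in $V^{\otimes n}$), so it has a Weyl filtration, and the required $\Ext^1$-vanishing follows once one knows that $W_K=V^{\otimes n}\bb_{n,K}^{(f)}$ has a good (dual Weyl) filtration. For this I would filter $W_K$ by the submodules $V^{\otimes n}\bb_{n,K}^{(g)}$, $g=f,\dots,[n/2]$, whose successive quotients are $V^{\otimes n}\bb_{n,K}^{(g)}/V^{\otimes n}\bb_{n,K}^{(g+1)}\cong(\mathcal{HT}_g^{\otimes n})^{\ast}$ by \eqref{biIso}, and invoke the fact that each space of dual partially harmonic tensors is good-filtered (indeed tilting) as an $Sp(V)$-module in every characteristic; since good filtrations are closed under extensions, $W_K$ is then good-filtered and $\Ext^1_{KSp(V)}(V^{\otimes n},W_K)=0$.

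The same short exact sequence then yields the dimension statement. Applying $\Hom_{KSp(V)}(V^{\otimes n},-)$ and using $\Ext^1_{KSp(V)}(V^{\otimes n},W_K)=0$ together with $\Hom_{KSp(V)}(V^{\otimes n},Q_K)\cong\End_{KSp(V)}(Q_K)$ (a consequence of $\varphi_K$ being surjective and of the surjectivity of $\rho$ just established) gives
$$\dim_K\End_{KSp(V)}(Q_K)=\dim_K\End_{KSp(V)}(V^{\otimes n})-\dim_K\Hom_{KSp(V)}(V^{\otimes n},W_K).$$
The first term is characteristic-free because $V^{\otimes n}$ is a self-dual tilting module, so $\dim_K\End_{KSp(V)}(V^{\otimes n})=\sum_\mu(V^{\otimes n}:\Delta(\mu))^2$ with the Weyl-filtration multiplicities $(V^{\otimes n}:\Delta(\mu))$ classically independent of $K$. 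For the second term, since $V^{\otimes n}$ is Weyl-filtered and $W_K$ is good-filtered one has $\dim_K\Hom_{KSp(V)}(V^{\otimes n},W_K)=\sum_\mu(V^{\otimes n}:\Delta(\mu))\,(W_K:\nabla(\mu))$, and the multiplicities $(W_K:\nabla(\mu))$ are again independent of $K$ because the good filtrations of $W_K$ and of its layers $(\mathcal{HT}_g^{\otimes n})^{\ast}$ can be realized over $\mathbb{Z}$ compatibly with base change; one may of course pin down the numerical value by evaluating everything over $\mathbb{C}$ using the bimodule decomposition in Theorem \ref{ddh}.

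The main obstacle throughout is precisely the input about the dual partially harmonic tensor spaces $\mathcal{HT}_g^{\otimes n}$: that each is a tilting $Sp(V)$-module and possesses an integral form whose good and Weyl filtrations base-change to every field $K$. This single fact simultaneously powers the $\Ext^1$-vanishing needed for the lifting and the characteristic-independence of the relevant $\Hom$-dimensions, and proving it requires genuine input from the invariant theory of the symplectic group (equivalently, a careful analysis of the cellular structure of $\bb_{n,K}$ measured against the tilting module $V^{\otimes n}$) rather than the purely formal homological machinery used elsewhere in this paper.
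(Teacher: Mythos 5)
This theorem is not proved in the paper at all: it is imported verbatim from \cite[Theorem 1.8]{Hu3} and used as a black box, so there is no internal proof to measure your argument against. Judged on its own terms, the formal skeleton of your argument is correct. The identification $\im\varphi_{f,K}=\im\rho$ via the surjectivity of $\varphi_K$ (Theorem \ref{ddh}), the lifting criterion ``$\rho$ is surjective provided $\Ext^1_{KSp(V)}(V^{\otimes n},V^{\otimes n}\bb_{n,K}^{(f)})=0$'', and the long-exact-sequence bookkeeping for the dimension count are all sound; in particular the isomorphism $\End_{KSp(V)}(Q_K)\cong\Hom_{KSp(V)}(V^{\otimes n},Q_K)$ you need does follow formally from the $\Ext^1$-vanishing (or, alternatively, from $\Hom_{KSp(V)}(W_K,Q_K)=0$, which one can extract from Lemma \ref{order} once both modules are known to be $\nabla$-filtered with weights in $\Lam_f^+$ and $\Lam_f^c$ respectively).

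The proof is nevertheless incomplete, and the gap is exactly where you locate it: everything rests on the assertion that $V_K^{\otimes n}\bb_{n,K}^{(f)}$ has a good filtration in every characteristic and that its good-filtration multiplicities (equivalently its formal character, equivalently the existence of a $\Z$-form of each layer $V^{\otimes n}\bb_{n}^{(g)}/V^{\otimes n}\bb_{n}^{(g+1)}$ that is a $\Z$-direct summand and base-changes to every $K$). You state this as a fact to be ``invoked'', but it is precisely the non-formal content of the theorem: it is what \cite{Hu3} actually proves (using the cellular/integral structure of $\bb_{n}$ together with the traceless-tensor results of \cite{DP,DS,DDH}), and the present paper also assumes it without proof in the argument for Theorem \ref{mainthm4} (``$V_K^{\otimes n}\bb_{n,K}^{(f)}$ is filtered by some $\nabla(\mu)$ with $\mu\in\Lam_f^+$''). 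So your argument reduces Theorem \ref{varphif} to another, equally deep statement of the same reference rather than establishing it. Two smaller points: the parenthetical claim that each $(\mathcal{HT}_g^{\otimes n})^{\ast}$ is \emph{tilting} is stronger than what you need (only a good filtration is required) and is not obviously true in small characteristic, where the splitting of Lemma \ref{keylem4} is unavailable; and the $\Ext^1$-vanishing should be carried out over the quasi-hereditary algebra $S_K^{sy}(m,n)$ (equivalently in the category of rational $Sp_{2m}$-modules) rather than literally over the abstract group algebra $KSp(V)$ — harmless here since all modules in sight are $S_K^{sy}(m,n)$-modules, but worth saying.
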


\begin{conj}\text{(\cite[5.5]{Hu3})}\label{conj1}  Assume $K$ is an algebraically closed field. Let $0\leq f\leq [n/2]$ be an integer. Then the map $\psi_{f,K}$ is surjective.
\end{conj}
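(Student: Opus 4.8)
The plan is to put the statement into the dominant-dimension framework of Section 2. Write $S:=S_K^{sy}(m,n)$ and $M:=V^{\otimes n}/V^{\otimes n}\bb_{n,K}^{(f)}$, and let $\bar S:=S/\ann_S(M)$ be the image of $S$ in $\End_K(M)$; by the surjectivity of $\psi_K$ in Theorem \ref{ddh} this is also the image of $KSp(V)$ in $\End_K(M)$, so that $\psi_{f,K}$ factors as $KSp(V)\twoheadrightarrow\bar S\to\End_{\bb_{n,K}/\bb_{n,K}^{(f)}}(M)$. Since $\varphi_{f,K}$ is surjective onto $\End_{KSp(V)}(M)=\End_{\bar S}(M)$ by Theorem \ref{varphif}, the action of $\bb_{n,K}/\bb_{n,K}^{(f)}$ on $M$ factors through $\varphi_{f,K}$ and makes $M$ a faithful module over $\End_{\bar S}(M)$; hence $\End_{\bb_{n,K}/\bb_{n,K}^{(f)}}(M)=\End_{\End_{\bar S}(M)}(M)$, and the factorization above becomes $KSp(V)\twoheadrightarrow\bar S\hookrightarrow\End_{\End_{\bar S}(M)}(M)$, the last map being the canonical injection (injective because $M$ is a faithful $\bar S$-module by construction). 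Therefore Conjecture \ref{conj1} is equivalent to $\bar S$ having the double centraliser property with respect to $M$. By Theorem \ref{ASthm}, using that $M$ is faithful over $\bar S$ one gets an embedding $\bar S\hookrightarrow M^{\oplus r}$ which, by Remark \ref{rem1}, may be arranged to be a left $\add(M)$-approximation; so the whole conjecture reduces to showing that the cokernel $M^{\oplus r}/\bar S$ embeds into $M^{\oplus s}$ for some $s$.

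To produce this embedding I would try to place $\bar S$ in the situation of Theorem \ref{mainthm2}, or, if quasi-heredity fails, of Theorem \ref{mainthm1}: namely, to show that $\bar S$ is quasi-hereditary (at worst standardly stratified) and carries the simple-preserving duality $\circ$ inherited from the symplectic Schur algebra, and that $M$ is a tilting $\bar S$-module. The (proper) costandard filtration of $M$ should come from the two-sided ideal filtration $\bb_{n,K}=\bb_{n,K}^{(0)}\supset\bb_{n,K}^{(1)}\supset\cdots$, which filters $M$ by the submodules $V^{\otimes n}\bb_{n,K}^{(g)}/V^{\otimes n}\bb_{n,K}^{(f)}$ with $0\le g\le f$, whose sections are $V^{\otimes n}\bb_{n,K}^{(g)}/V^{\otimes n}\bb_{n,K}^{(g+1)}\cong(\mathcal{HT}_g^{\otimes n})^{*}$ by (\ref{biIso}); it then suffices to know that each space $\mathcal{HT}_g^{\otimes n}$ of partially harmonic tensors has a Weyl (good) filtration over $KSp(V)$, so that its dual lies in $\rf(\lnab)$. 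The standard filtration of $M$ is obtained either by applying $\circ$, or by running the same argument for the dual space $(V^{*})^{\otimes n}\cong V^{\otimes n}$, whose partially harmonic tensors give the good filtration of $M^{*}\cong M^{\circ}$. One must also check that these $S$-filtrations descend to $\bar S$-filtrations and, in the case of Theorem \ref{mainthm1}, verify the conditions $\Delta(\lambda)\hookrightarrow M^{\oplus r}$ and $M^{\oplus r}\twoheadrightarrow\lnab(\lambda)$ directly; granting all of this, Theorem \ref{mainthm2} (or \ref{mainthm1}) yields $\bar S=\End_{\End_{\bar S}(M)}(M)$, which is the surjectivity of $\psi_{f,K}$.

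The hard part is the characteristic-free input. First, $\bar S$ is $S/\ann_S(M)$, and the composition factors of $M$ form the set of weights $\lambda$ of ``large size'', which sits at the top of the weight poset of $S$ rather than being a saturated (down-closed) subset; so Donkin's truncation theory does not immediately show $\bar S$ is quasi-hereditary, and one needs a separate analysis of its stratified structure (and of how $\circ$ descends). Second, and more seriously, one must show that $V^{\otimes n}\bb_{n,K}^{(f)}$ is a $\Delta$-filtered submodule of the tilting module $V^{\otimes n}$ over $S$ — equivalently that $\Ext_S^1(M,\nabla(\lambda))=0$ for all $\lambda$, equivalently that every partially harmonic tensor space $\mathcal{HT}_g^{\otimes n}$ has a good filtration over the symplectic group — for an \emph{arbitrary} algebraically closed $K$. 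In characteristic zero, and more generally when $\cha K>\min\{n-f+m,n\}$, these properties hold, which is precisely what makes Theorem \ref{mainthm4} go through; but in small positive characteristic a partially harmonic tensor space can fail to be good-filtered and the argument breaks. Removing the hypothesis seems to require either an explicit $\Z$-basis of $V_\Z^{\otimes n}$ simultaneously adapted to the $\bb_n$-ideal filtration and to a Weyl filtration, so that all of the filtrations above are defined over $\Z$ and stay exact after base change, or a flatness-and-torsion-freeness analysis of the relevant $\Ext^1$-groups and of the ideal $\ann_{S_\Z}(M_\Z)$ over a $\Z$-form forcing $\dim_K\bar S_K$ to be independent of $\cha K$. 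I expect this integral structure theory to be the real difficulty.
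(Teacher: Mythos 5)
The statement you were asked about is a \emph{conjecture}, and the paper itself does not prove it in the stated generality: it establishes the surjectivity of $\psi_{f,K}$ only under the hypothesis $\cha K>\min\{n-f+m,n\}$ (Theorem \ref{mainthm4} and the corollary following it). Your opening reduction is exactly the paper's: using the surjectivity of $\varphi_{f,K}$ (Theorem \ref{varphif}) one identifies $\End_{\bb_{n,K}/\bb_{n,K}^{(f)}}(M)$ with $\End_{\End_{S_{f,K}^{sy}(m,n)}(M)}(M)$ for $M=V^{\otimes n}/V^{\otimes n}\bb_{n,K}^{(f)}$, so the conjecture is equivalent to the double centraliser property of $S_{f,K}^{sy}(m,n)$ on $M$, and by Theorem \ref{ASthm} to extending an injective left $\add(M)$-approximation $S_{f,K}^{sy}(m,n)\hookrightarrow M^{\oplus r}$ one step further. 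That part of your proposal is correct and matches the paper.

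Where you diverge is in how you propose to obtain the second step. You want to endow $\bar S=S_{f,K}^{sy}(m,n)$ directly with a (quasi-hereditary or standardly stratified) structure for which $M$ is a tilting module and then invoke Theorem \ref{mainthm1} or \ref{mainthm2}; the essential input would be that each partially harmonic space $\mathcal{HT}_g^{\otimes n}$ has a good filtration, i.e.\ that $V^{\otimes n}\bb_{n,K}^{(f)}$ is a $\Delta$-filtered submodule of $V^{\otimes n}$. The paper instead never leaves the ambient quasi-hereditary algebra $S_K^{sy}(m,n)$: it takes the known two-step coresolution $0\to S_K^{sy}(m,n)\to(V^{\otimes n})^{\oplus r}\to(V^{\otimes n})^{\oplus s}$ of (\ref{zzz}), checks that $\delta_K$ and $\varepsilon_K$ descend to the quotient modulo $V^{\otimes n}\bb_{n,K}^{(f)}$, and proves exactness of the descended sequence using the splitting $V_K^{\otimes n}=V_K^{\otimes n}\bb_{n,K}^{(f)}\oplus\bigl(V_K^{\otimes n}/V_K^{\otimes n}\bb_{n,K}^{(f)}\bigr)^{*}$ of Lemma \ref{keylem4}, which is obtained from the linkage principle and is precisely where the hypothesis $\cha K>\min\{n-f+m,n\}$ enters. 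This sidesteps any structural analysis of the quotient algebra $\bar S$ itself; your worry on that point is justified, since the weights occurring in $M$ form an up-closed rather than a saturated subset, so $\bar S$ is not a Donkin truncation and its quasi-heredity is not automatic. Your diagnosis of the obstruction in small characteristic (good filtrations of the $\mathcal{HT}_g^{\otimes n}$, or equivalently an integral/$\Z$-form analysis of the relevant filtrations) is accurate and consistent with the paper leaving the general case open. In short: your proposal is a correct reduction together with a plausible alternative strategy that, like the paper's, succeeds only in the restricted range of characteristics; neither you nor the paper proves the conjecture as stated, and you say so honestly.
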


One of our original starting point of this work is our attempt to the proof of the above Conjecture \ref{conj1}. First, we can make some reduction of the above conjecture. It is clear that $$
\End_{\bb_{n,K}/\bb_{n,K}^{(f)}}(V^{\otimes n}/V^{\otimes n}\bb_{n,K}^{(f)})\cong \End_{\bb_{n,K}}(V^{\otimes n}/V^{\otimes n}\bb_{n,K}^{(f)}) .
$$

\begin{dfn}\label{sf} We define $$
S_{f,K}^{sy}(m,n):=\psi_{f,K}(KSp(V))\subseteq\End_{\bb_{n,K}}\bigl(V^{\otimes n}/V^{\otimes n}\bb_{n,K}^{(f)}\bigr).
$$
\end{dfn}

By the main result in \cite{DDH}, the image of $KSp(V)$ in $\End_K(V^{\otimes n})$ is just $S_K^{sy}(m,n)=\End_{\bb_{n,K}}\bigl(V^{\otimes n}\bigr)$.
Let $\pi_{f,K}: \End_{\bb_{n,K}}(V^{\otimes n})\rightarrow\End_{\bb_{n,K}}\bigl(V^{\otimes n}/V^{\otimes n}\bb_{n,K}^{(f)}\bigr)$ be the natural homomorphism. By construction, we have the following commutative diagram: \begin{equation}\label{commdiag1}
\xymatrix{
 KSp(V) \ar@{>>}[r]^{\psi_{K}\hspace{40pt}} \ar@{>>}[d]_{\psi_{f,K}} &
 S_K^{sy}(m,n)=\End_{\bb_{n,K}}\bigl(V^{\otimes n}\bigr)
\ar@{>}[d]_{\pi_{f,K}}\\
 S_{f,K}^{sy}(m,n) \ar@{^{(}->}[r]^{\iota_{f,K}\hspace{35pt}} &
\End_{\bb_{n,K}}\bigl(V^{\otimes n}/V^{\otimes n}\bb_{n,K}^{(f)}\bigr),}
\end{equation}
where the top horizontal map and the left vertical map are both surjective, and the bottom horizontal map is injective. As a result, the map $\pi_{f,K}$ gives rise to a surjection \begin{equation}\label{pif0} \pi_{f,K}: S_K^{sy}(m,n)\twoheadrightarrow S_{f,K}^{sy}(m,n).\end{equation}
The advantage of working with $S_K^{sy}(m,n)$ lies in that we can now allow $K$ to be an arbitrary (not necessarily infinite) field or even an integral domain. We use $\psi'_{f,K}$ to denote the composition of $\pi_{f,K}$ with the natural inclusion $S_{f,K}^{sy}(m,n)\hookrightarrow\End_{\bb_{n,K}}(V_K^{\otimes n}/V_K^{\otimes n}\bb_{n,K}^{(f)})$.
It is easy to see that Conjecture \ref{conj1} is a consequence of the following conjecture.

\begin{conj}\label{conj2}
Let $0\leq f\leq [n/2]$ be an integer and $K$ an arbitrary field. Then the natural map
$\psi'_{f,K}: S_K^{sy}(m,n)\rightarrow\End_{\bb_{n,K}}(V_K^{\otimes n}/V_K^{\otimes n}\bb_{n,K}^{(f)})$ is surjective.
\end{conj}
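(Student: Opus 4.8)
\medskip
\noindent\textbf{Towards a proof of Conjecture~\ref{conj2}.} Write $A:=S_{f,K}^{sy}(m,n)$ and $M:=V_K^{\otimes n}/V_K^{\otimes n}\bb_{n,K}^{(f)}$, so that $M$ is a faithful left $A$-module by Definition~\ref{sf}. The plan is to identify the conjecture with a double centraliser property of $A$ and then to verify it through the tilting-module criterion of Section~2. First I would remove the hypotheses on the field: the symplectic Schur algebra $S^{sy}(m,n)$, the Brauer algebra $\bb_{n}$ and the bimodule $V^{\otimes n}/V^{\otimes n}\bb_{n}^{(f)}$ all admit $\Z$-forms with flat base change (\cite{Oe,Oe2,Do1,Do2,Hu3}), and $\End$ of a finite dimensional module over a finite dimensional algebra commutes with extension of scalars, so both $\psi'_{f,K}$ and the homomorphism $\varphi_{f,K}$ of diagram~(\ref{commdiag1}) arise by base change from their analogues over $\overline{K}$; by faithfully flat descent along $K\hookrightarrow\overline{K}$ it suffices to prove the surjectivity of $\psi'_{f,K}$ when $K$ is algebraically closed. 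For such $K$, Theorem~\ref{varphif} gives that $\varphi_{f,K}$ is surjective, so $\End_{A}(M)=\End_{KSp(V)}(M)$ is a quotient of $(\bb_{n,K}/\bb_{n,K}^{(f)})^{\op}$, whence $\End_{\End_{A}(M)}(M)=\End_{\bb_{n,K}}(M)$ and the canonical map $A\to\End_{\End_{A}(M)}(M)$ coincides with the injection $\iota_{f,K}$ of~(\ref{commdiag1}). Since $\pi_{f,K}\colon S_{K}^{sy}(m,n)\twoheadrightarrow A$ is surjective and $\psi'_{f,K}=\iota_{f,K}\circ\pi_{f,K}$, the surjectivity of $\psi'_{f,K}$ is \emph{equivalent} to $A=\End_{\End_{A}(M)}(M)$, i.e.\ to $A$ having the double centraliser property with respect to $M$.

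To prove that, one would apply Theorem~\ref{mainthm1} (or Theorem~\ref{mainthm2}, if $A$ can be shown quasi-hereditary) with $T:=M$, which requires two structural facts. (i) \emph{$A$ is a finite dimensional standardly stratified algebra with a simple preserving duality.} By definition $A=S^{sy}(m,n)/\ann_{S^{sy}(m,n)}(M)$; by \cite{Hu3} the composition factors of $M$ are exactly the $L(\lambda)$ of valence $<f$, and these form an order \emph{filter} in the weight poset of $S^{sy}(m,n)$, while $\ann(M)$ is fixed by the cellular anti-involution of $S^{sy}(m,n)$ (\cite{Oe2}) because $M$ and $M^{\circ}$ have the same composition factors. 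Using the integral stratifying chain of $S^{sy}(m,n)$ one then checks that the quotient $A$ is standardly stratified with the duality inherited from the anti-involution, and that the modules $\Delta^{A}(\lambda)$ and $\lnab^{A}(\lambda)$ are the appropriate truncations of the standard and proper costandard $S^{sy}(m,n)$-modules indexed by the filter. (ii) \emph{$M$ is a faithful tilting $A$-module, and for every $\lambda$ in the filter there are an embedding $\Delta^{A}(\lambda)\hookrightarrow M^{\oplus r}$ and an epimorphism $M^{\oplus r}\twoheadrightarrow\lnab^{A}(\lambda)$.} Faithfulness is the definition of $A$; the embedding would be obtained, via a characteristic-free straightening argument modelled on Stokke's embedding of the Weyl modules of $S^{sy}(m,n)$ into $V^{\otimes n}$ (\cite{Sto}), after passing to the quotient $A$, and the epimorphism then follows by applying the duality $\circ$. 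Granting (i) and (ii), Theorem~\ref{mainthm1} gives $A=\End_{\End_{A}(M)}(M)$, hence Conjecture~\ref{conj2}.

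The genuine content of (ii) is the tilting property $M\in\rf(\Delta^{A})\cap\rf(\lnab^{A})$. The Brauer-ideal filtration $V^{\otimes n}=V^{\otimes n}\bb_{n}^{(0)}\supseteq V^{\otimes n}\bb_{n}^{(1)}\supseteq\cdots\supseteq V^{\otimes n}\bb_{n}^{(f)}$ together with the bimodule isomorphisms $V^{\otimes n}\bb_{n}^{(j)}/V^{\otimes n}\bb_{n}^{(j+1)}\cong(\mathcal{HT}_{j}^{\otimes n})^{\ast}$ of~(\ref{biIso}) presents $M$, as an $A$-module, as an iterated extension of the dual partially harmonic spaces $(\mathcal{HT}_{j}^{\otimes n})^{\ast}$ for $0\le j<f$; since $\rf(\Delta^{A})$ and $\rf(\lnab^{A})$ are closed under extensions, it is enough to show that each section $(\mathcal{HT}_{j}^{\otimes n})^{\ast}$ is a tilting $A$-module, or directly that $M$ lies in both categories. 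By Lemma~\ref{homolog1} this amounts to showing that, inside the tilting $S^{sy}(m,n)$-module $V^{\otimes n}$, the submodule $V^{\otimes n}\bb_{n}^{(f)}$ carries a good (that is, $\nabla$-) filtration whose quotient $M$ acquires a Weyl ($\Delta^{A}$-) filtration over $A$, and dually for $M^{\circ}$. Over $\C$ this is immediate from semisimplicity, and for $\cha K>\min\{n-f+m,n\}$ it is exactly the content of Theorem~\ref{mainthm4}, whose proof uses the bound to force the relevant $\Ext^{1}$- and $\Ext^{2}$-groups to vanish.

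The main obstacle is to dispense with that bound, i.e.\ to prove characteristic-freely that the trace $V^{\otimes n}\bb_{n}^{(f)}$ sits inside $V^{\otimes n}$ compatibly with the good and Weyl filtrations. The natural route is to exhibit an explicit $\Z$-basis of $V^{\otimes n}/V^{\otimes n}\bb_{n}^{(f)}$ adapted to a $\Delta$-filtration over the integral symplectic Schur algebra, so that reduction modulo $p$ remains flat; here the field-independence of $\dim M$ and of $\dim\End_{KSp(V)}(M)$ established in \cite{Hu3} is the right flatness input, and Stokke's characteristic-free straightening of $\Delta(\lambda)\hookrightarrow V^{\otimes n}$ (\cite{Sto}) is the prototype to be extended to dual harmonic tensors. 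A second point to be settled in full generality is (i): since the composition factors of $M$ form a filter rather than a saturated set, Donkin's truncation theorem (quoted in Section~3) does not apply directly, so the standardly stratified --- ideally quasi-hereditary --- structure of $S_{f,K}^{sy}(m,n)$ over an arbitrary field must be extracted from the known integral cellular and stratifying data of $S^{sy}(m,n)$.
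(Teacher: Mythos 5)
You have correctly reduced the conjecture to the double centraliser property of $A:=S_{f,K}^{sy}(m,n)$ on $M:=V_K^{\otimes n}/V_K^{\otimes n}\bb_{n,K}^{(f)}$ (via the surjectivity of $\varphi_{f,K}$ and the factorisation $\psi'_{f,K}=\iota_{f,K}\circ\pi_{f,K}$), and this matches the paper's set-up. But what you have written is a programme, not a proof: the two inputs you would need for Theorem \ref{mainthm1} --- that $A$ is standardly stratified with a simple preserving duality, and that $M$ is a tilting $A$-module admitting the required embeddings and surjections --- are precisely the points you leave open, and they are open in the paper as well. The statement remains a \emph{conjecture} there; the paper establishes it only when $\cha K>\min\{n-f+m,n\}$ (the semisimple case together with Theorem \ref{mainthm4}), and the decisive input is Lemma \ref{keylem4}, a linkage-principle computation showing that under this bound the weights in $\Lam_f^+$ and in $\Lam_f^c$ lie in different blocks, whence $V_K^{\otimes n}=V_K^{\otimes n}\bb_{n,K}^{(f)}\oplus\bigl(V_K^{\otimes n}/V_K^{\otimes n}\bb_{n,K}^{(f)}\bigr)^{\ast}$. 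That splitting is exactly what makes $M$ behave like a summand of the tilting module $V_K^{\otimes n}$, and it is not available without a hypothesis on $\cha K$; nothing in your sketch replaces it.

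Two further cautions on the route you propose, even in the regime where the result can be proved. First, the paper does not apply Theorem \ref{mainthm1} or \ref{mainthm2} to the quotient algebra $A$ at all: it works inside $S_K^{sy}(m,n)\lmod$, takes the known approximation sequence $0\to S_K^{sy}(m,n)\to(V_K^{\otimes n})^{\oplus r}\to(V_K^{\otimes n})^{\oplus s}$ of (\ref{zzz}), and shows that it descends modulo $V_K^{\otimes n}\bb_{n,K}^{(f)}$, using Lemma \ref{keylem4} together with the vanishing of $\Hom_{S_K^{sy}(m,n)}\bigl((V_K^{\otimes n}/V_K^{\otimes n}\bb_{n,K}^{(f)})^{\ast},V_K^{\otimes n}\bb_{n,K}^{(f)}\bigr)$ to verify exactness in the middle; this avoids ever putting a stratified structure on $A$. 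Second, your step (i) is more delicate than you indicate: as you yourself observe, $\Lam_f^c$ is a coideal (upward closed) for the dominance order by Lemma \ref{order}, and for a coideal the natural quasi-hereditary companion of $S_K^{sy}(m,n)$ is an idempotent truncation $eS_K^{sy}(m,n)e$, not a quotient by an annihilator; there is no general principle making $S_K^{sy}(m,n)/\ann(M)$ standardly stratified. Establishing that, and the tilting property of $M$ over it, characteristic-freely appears to be of the same order of difficulty as the conjecture itself.
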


Let $\bk$ be the algebraic closure of $K$. It is clear that $\psi'_{f,K}$ is surjective if and only if $\psi'_{f,\bk}$ is surjective. Suppose that $\cha K=0$ or $\cha K>n$. Then by \cite[Lemma 5.16]{Ga}, $V_\bk^{\otimes n}$ is a semisimple $\bk Sp(V)$-module. In particular, $\End_{\bk Sp(V_\bk)}(V_\bk^{\otimes n})$ is semisimple. Since the action of $\bb_{n,\bk}$ on $V_\bk^{\otimes n}$ factors through the action of $\End_{\bk Sp(V_\bk)}(V_\bk^{\otimes n})$ on $V_\bk^{\otimes n}$ and the natural homomorphism $\varphi_\bk: \bb_{n,\bk}^{\rm{op}}\rightarrow\End_{\bk Sp(V_\bk)}(V_\bk^{\otimes n})$ is surjective (\cite{DDH}), it follows that the action of $\bb_{n,\bk}$ on $V_\bk^{\otimes n}$ is semisimple too. Therefore, it is easy to see that Conjecture \ref{conj2} holds in this case. Next we shall show that Conjecture \ref{conj2} also holds when
$\cha K>n-f+2m$.

Let $U_\Z(\mathfrak{sp}_{2m})$ be the Kostant $\Z$-form of the universal enveloping algebra of the symplectic Lie algebra $\mathfrak{sp}_{2m}(\C)$. For any field $K$, we define $U_K(\mathfrak{sp}_{2m}):=K\otimes_{\Z}U_\Z(\mathfrak{sp}_{2m})$. By the main result of \cite{Hu2}, we have two surjective algebra homomorphisms: $$
\varphi_K: (\bb_{n,K})^{\op}\twoheadrightarrow\End_{U_K(\mathfrak{sp}_{2m})}(V_K^{\otimes n}),\quad\,\,
\psi_K: U_K(\mathfrak{sp}_{2m})\twoheadrightarrow S_{K}^{sy}(m,n)=\End_{\bb_{n,K}}(V_K^{\otimes n}) .
$$
As a result, $S_K^{sy}(m,n)$ has the double centralizer property with respect to $V_K^{\otimes n}$. Now applying Theorem \ref{ASthm}, we have an exact sequence of $S_K^{sy}(m,n)$-module homomorphisms: \begin{equation}\label{zzz}
0\rightarrow S_K^{sy}(m,n)\overset{\delta_K}{\rightarrow}(V_K^{\otimes n})^{\oplus r}\overset{\varepsilon_K}{\rightarrow} (V_K^{\otimes n})^{\oplus s}, \end{equation}
where $r,s\in\N$, and $\delta_K$ is a left $\add(V_K^{\otimes n})$-approximation of $S_K^{sy}(m,n)$.

Let $1_K$ be the unit element of $S_K^{sy}(m,n)$. We can write $$
\delta_K(1)=u_1\oplus\cdots\oplus u_r,
$$
where $u_i\in V_K^{\otimes n}$ for each $1\leq i\leq r$. Suppose that $a\in\Ker\pi_{f,K}=\Hom_{\bb_{n,K}}(V_K^{\otimes n},V_K^{\otimes n}\bb_{n,K}^{(f)})$. It follows that $$
\delta_K(a)=au_1\oplus\cdots\oplus au_r .
$$
Now $\pi_{f,K}(a)=0$ means that $aV_K^{\otimes n}\subseteq V_K^{\otimes n}\bb_{n,K}^{(f)}$.
As a result, we get that $$
\delta_K(a)=au_1\oplus\cdots\oplus au_r\in\bigl(V_K^{\otimes n}\bb_{n,K}^{(f)}\bigr)^{\oplus r} .
$$
This shows that $\delta_K$ induces a well-defined homomorphism $$\delta_{f,K}: S_{f,K}^{sy}(m,n)\rightarrow (V_K^{\otimes n}/V_K^{\otimes n}\bb_{n,K}^{(f)})^{\oplus r}.$$

\begin{lem}\label{approx} With the notations as above, the integers $r,s$, the map $\delta_K$ and the elements $u_1,\cdots,u_r\in V_K^{\otimes n}$ can be chosen such that the map $\delta_{f,K}$ is an injective left $\add(V_K^{\otimes n}/V_K^{\otimes n}\bb_{n,K}^{(f)})$-approximation of $S_{f,K}^{sy}(m,n)$.
\end{lem}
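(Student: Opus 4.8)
The plan is to check, over an arbitrary field $K$, that the map $\delta_{f,K}$ induced by $\delta_K$ is automatically injective and a left $\add(V_K^{\otimes n}/V_K^{\otimes n}\bb_{n,K}^{(f)})$-approximation, so that one may in fact take $r,s,\delta_K,u_1,\dots,u_r$ exactly as furnished by the exact sequence \eqref{zzz} (the fallback, if some subtlety forces a choice, is indicated at the end). Write $W:=V_K^{\otimes n}$, $\bar W:=W/W\bb_{n,K}^{(f)}$, $S:=S_K^{sy}(m,n)=\End_{\bb_{n,K}}(W)$, let $q\colon W\twoheadrightarrow\bar W$ be the quotient map, and let $\pi_{f,K}\colon S\twoheadrightarrow S_{f,K}^{sy}(m,n)$ be the surjection \eqref{pif0}. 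Every $a\in S$ is right $\bb_{n,K}$-linear, so $W\bb_{n,K}^{(f)}$ is an $S$-submodule of $W$, and it is killed by $\Ker\pi_{f,K}=\{a\in S\mid a(W)\subseteq W\bb_{n,K}^{(f)}\}$; hence $\bar W$ is a left $S_{f,K}^{sy}(m,n)$-module, and by construction $\delta_{f,K}\circ\pi_{f,K}=q^{\oplus r}\circ\delta_K$. The one external input is that, by \cite{Hu2}, both $\varphi_K\colon\bb_{n,K}^{\op}\to\End_{U_K(\mathfrak{sp}_{2m})}(W)$ and $\psi_K\colon U_K(\mathfrak{sp}_{2m})\to S$ are surjective; since $S=\psi_K(U_K(\mathfrak{sp}_{2m}))$ inside $\End_K(W)$, this gives $\End_S(W)=\End_{U_K(\mathfrak{sp}_{2m})}(W)=\varphi_K(\bb_{n,K}^{\op})$, i.e. every left $S$-linear endomorphism of $W$ is right multiplication by an element of $\bb_{n,K}$.

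For injectivity I would first observe that $u_1,\dots,u_r$ generate $W$ as a right $\bb_{n,K}$-module. Indeed, for $w\in W$ the evaluation map $\rho_w\colon S\to W$, $a\mapsto a(w)$, is left $S$-linear, so --- $\delta_K$ being a left $\add(W)$-approximation --- it factors as $\rho_w=\psi_w\circ\delta_K$ with $\psi_w\colon W^{\oplus r}\to W$ left $S$-linear; writing $\psi_w$ in components, each of which is right multiplication by some $b_{w,i}\in\bb_{n,K}$, we get $w=\rho_w(1)=\psi_w(u_1\oplus\cdots\oplus u_r)=\sum_i u_ib_{w,i}$. Now suppose $a\in S$ with $\delta_{f,K}$ killing the class of $a$, i.e. $a(u_i)\in W\bb_{n,K}^{(f)}$ for all $i$. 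Then for every $w\in W$, $a(w)=\sum_i a(u_i)b_{w,i}\in W\bb_{n,K}^{(f)}$, using that $a$ is right $\bb_{n,K}$-linear and that $W\bb_{n,K}^{(f)}$ is a right $\bb_{n,K}$-submodule. Hence $a(W)\subseteq W\bb_{n,K}^{(f)}$, i.e. $a\in\Ker\pi_{f,K}$, so $\delta_{f,K}$ is injective.

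For the approximation property it suffices, since a map with target in $\add(\bar W)$ is a left $\add(\bar W)$-approximation as soon as every map into $\bar W$ factors through it, to show that any left $S_{f,K}^{sy}(m,n)$-linear $h\colon S_{f,K}^{sy}(m,n)\to\bar W$ factors through $\delta_{f,K}$. Pull $h$ back to $\tilde h:=h\circ\pi_{f,K}\colon S\to\bar W$; since ${}_SS$ is projective and $q$ is onto, lift it to $\hat h\colon S\to W$ with $q\circ\hat h=\tilde h$; since $\delta_K$ is an approximation, write $\hat h=\phi\circ\delta_K$ with $\phi\colon W^{\oplus r}\to W$ left $S$-linear. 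The crucial point is that the components of $\phi$ lie in $\End_S(W)$ and hence act by right multiplication by elements of $\bb_{n,K}$, so $\phi$ sends $(W\bb_{n,K}^{(f)})^{\oplus r}$ into $W\bb_{n,K}^{(f)}$; therefore $q\circ\phi$ descends to a left $S_{f,K}^{sy}(m,n)$-linear map $\bar\phi\colon\bar W^{\oplus r}\to\bar W$ with $\bar\phi\circ q^{\oplus r}=q\circ\phi$. Then $\bar\phi\circ\delta_{f,K}\circ\pi_{f,K}=\bar\phi\circ q^{\oplus r}\circ\delta_K=q\circ\phi\circ\delta_K=q\circ\hat h=h\circ\pi_{f,K}$, and cancelling the epimorphism $\pi_{f,K}$ gives $\bar\phi\circ\delta_{f,K}=h$, as required.

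I expect the main obstacle to be precisely this descent step: a priori $\phi$ is merely left $S$-linear, and an arbitrary left $S$-linear endomorphism of $W$ need not preserve the submodule $W\bb_{n,K}^{(f)}$, so without extra input one cannot push $q\circ\phi$ down to $\bar W$. It is the surjectivity of $\psi_K$ (i.e. the symplectic Schur-Weyl duality of \cite{Hu2}) that saves the argument, identifying $\End_S(W)$ with the image of $\bb_{n,K}$, so that the components of $\phi$ are genuine right multiplications and automatically respect the $\bb_{n,K}$-submodule $W\bb_{n,K}^{(f)}$; the same fact is what forces $u_1,\dots,u_r$ to generate $W$ over $\bb_{n,K}$, hence gives injectivity. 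If one prefers not to rely on this, one can instead enlarge $\delta_K$ by adjoining the evaluation maps $a\mapsto a(w)$ for $w$ ranging over a $\bb_{n,K}$-module generating set of $W$, which keeps $\delta_K$ an injective left $\add(W)$-approximation and, after a change of coordinates on the target of the form $(x,y)\mapsto(x,y-\phi(x))$ absorbing the new components into the old ones, still continues to an exact sequence of the shape \eqref{zzz}.
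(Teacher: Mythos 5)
Your proof is correct, and it is in fact more complete than the one in the paper, which disposes of the whole approximation property by appealing to Remark \ref{rem1} ("it suffices to show that $\delta_{f,K}$ is injective") and simply \emph{chooses} $u_1,\dots,u_r$ to be a $K$-spanning set of $V_K^{\otimes n}$ to get injectivity. The differences are worth recording. For injectivity, the paper's choice of a $K$-spanning set makes the argument one line, whereas you derive the weaker but sufficient fact that $u_1,\dots,u_r$ generate $V_K^{\otimes n}$ as a right $\bb_{n,K}$-module from the approximation property of $\delta_K$ together with $\End_{S_K^{sy}(m,n)}(V_K^{\otimes n})=\varphi_K(\bb_{n,K}^{\op})$; both work, and your fallback paragraph is exactly the paper's choice. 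For the approximation property, your descent argument --- lift a map $S_{f,K}^{sy}(m,n)\to \overline{W}$ to $S_K^{sy}(m,n)\to W$, factor it through $\delta_K$, and use the surjectivity of $\varphi_K$ to see that the components of the factoring map are right multiplications by Brauer elements and hence preserve $V_K^{\otimes n}\bb_{n,K}^{(f)}$ --- is a genuine proof of a point the paper leaves essentially unargued (Remark \ref{rem1} as literally stated, that \emph{any} homomorphism into $T$ is a left $\add(T)$-approximation, is false in general, e.g.\ for the zero map). You correctly identify this descent step as the crux; the same mechanism is what the paper uses a page later to show that $\varepsilon_K$ induces $\varepsilon_{f,K}$, so your argument imports an ingredient the authors visibly had in hand but did not deploy here. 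In short: same skeleton, but you have filled the gap that the paper papers over.
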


\begin{proof} The integer $r$, the map $\delta_K$ and $u_1,\cdots,u_r$ can be chosen such that $u_1,\cdots,u_r$ is a $K$-linear generator of $V_K^{\otimes n}$. In view of Remark \ref{rem1}, it suffices to show that $\delta_{f,K}$ is injective.

Suppose that $(a\overline{u_1},\cdots,a\overline{u_r})=\delta_K(a)=\delta_{f,K}(\overline{a})=\overline{0}$, where $a\in S_K^{sy}(m,n)$. Since $u_1,\cdots,u_r$ is a $K$-linear generator of $V_K^{\otimes n}$, it follows that $aV_K^{\otimes n}\subseteq V_K^{\otimes n}\bb_{n,K}^{(f)}$, which implies that $\pi_{f,K}(a)=0$ and hence $\iota_{f,K}(\overline{a})=\pi_{f,K}(a)=0$. This proves $\overline{a}=\overline{0}$ as required.
\end{proof}

\begin{dfn} Set $$\begin{aligned}
\Lam&:=\bigl\{\lam=(\lam_1,\cdots,\lam_m)\bigm|\sum_{i=1}^{m}\lam_i=n-2r, 0\leq r\leq[n/2],\lam_i\in\Z,\forall\,1\leq i\leq m\bigr\},\\
\Lam^+&:=\bigl\{\lam=(\lam_1,\cdots,\lam_m)\in\Lam\bigm|\sum_{i=1}^{m}\lam_i=n-2r, 0\leq r\leq[n/2],\lam_1\geq\lam_2\geq\cdots\geq\lam_m\geq 0\bigr\}. \end{aligned} $$
Then $\Lam$ (resp., $\Lam^+$) is the set of all weights (resp., dominant weights) of $V^{\otimes n}$ as $Sp(V)$-module. For any integer $0\leq f\leq [n/2]$, we define \begin{equation}\label{lamf}
\Lam_f^+:=\bigl\{\lam=(\lam_1,\cdots,\lam_m)\in\Lam^+\bigm|\sum_{i=1}^{m}\lam_i=n-2r', f\leq r'\leq [n/2], r'\in\N\bigr\},\quad \Lam_f^c:=\Lam^+\setminus\Lam_f^+ .
\end{equation}
\end{dfn}

Let $E$ be an Euclidian space with standard basis $\eps_1,\cdots,\eps_{2m}$. Let $S:=\{\eps_i-\eps_{i+1},2\eps_m|1\leq i<m\}$, which is a set of simple roots in the root system $\Phi$ of type $C_m$. Let $\Phi^+$ be the corresponding subset of positive roots. We identify each $\lam=(\lam_1,\cdots,\lam_m)\in\Lam$ with $\lam_1\eps_1+\cdots+\lam_m\eps_m$. For any $\lam,\mu\in\Lam$, we define $\lam\geq\mu$ if and only if
$\lam-\mu\in\sum_{\alpha\in S}\N\alpha$.

Recall that $(S_K^{sy}(m,n),\Lam^+,\geq)$ is a quasi-hereditary algebra (\cite{Do1,Do2}).  For each $\lam\in\Lam^+$, we use $\Delta(\lam), \nabla(\lam), L(\lam)$ to denote the standard module, costandard module and simple module labelled by $\lam$ respectively.

Let $\lam,\mu\in\Lam^+$, where $\lam\vdash n-2a, \mu\vdash n-2b$ and $0\leq a<b\leq[n/2]$. We claim that $\lam\not\leq\mu$. In fact, suppose that $\lam\leq\mu$, then there are some non-negative integers $a_1,\cdots,a_m$ such that $$
\mu-\lam=\sum_{i=1}^{m}\mu_i\eps_i-\sum_{i=1}^{m}\lam_i\eps_i=a_1(\eps_1-\eps_2)+\cdots+a_{m-1}(\eps_{m-1}-\eps_{m})+a_m2\eps_m .
$$
It follows that $$
0>2(a-b)=(n-2b)-(n-2a)=\sum_{i=1}^{m}\mu_i-\sum_{i=1}^{m}\lam_i=2a_m\geq 0,
$$
which is a contradiction. This proves our claim which is the following lemma\footnote{We corrected a small error here in the argument in the proof of \cite[Lemma 3.7]{Hu3}.}.

\begin{lem}\text{(\cite[Lemma 3.7]{Hu3})}\label{order} Let $\lam,\mu\in\Lam^+$, where $\lam\vdash n-2a, \mu\vdash n-2b$ and $0\leq a<b\leq[n/2]$. Then $\lam\not\leq\mu$.
\end{lem}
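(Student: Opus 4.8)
The plan is to prove Lemma \ref{order} directly by a weight-counting argument, which is essentially forced by the structure of the root system $\Phi$ of type $C_m$ and the way partitions of different sizes sit inside $\Lam^+$. Suppose for contradiction that $\lam\leq\mu$, i.e. $\mu-\lam\in\sum_{\alpha\in S}\N\alpha$ where $S=\{\eps_1-\eps_2,\dots,\eps_{m-1}-\eps_m,2\eps_m\}$ is the fixed set of simple roots. So I would write $\mu-\lam=\sum_{i=1}^{m-1}a_i(\eps_i-\eps_{i+1})+a_m(2\eps_m)$ for non-negative integers $a_1,\dots,a_m$.

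The key observation is that the sum of all coordinates behaves very differently under the two types of simple roots: a root $\eps_i-\eps_{i+1}$ with $i<m$ contributes $0$ to the coordinate-sum, whereas the root $2\eps_m$ contributes $2$. Concretely, applying the linear functional ``sum of coordinates'' to the displayed equation gives $\sum_{i=1}^m\mu_i-\sum_{i=1}^m\lam_i=2a_m$. On the other hand $\lam\vdash n-2a$ and $\mu\vdash n-2b$, so the left-hand side equals $(n-2b)-(n-2a)=2(a-b)$, which is strictly negative since $a<b$. This forces $2a_m=2(a-b)<0$, contradicting $a_m\in\N$. Hence no such relation exists and $\lam\not\leq\mu$, proving the lemma.

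There is really no serious obstacle here; the only thing to be careful about is that one must use the specific choice of simple roots for type $C_m$ (in particular that the long simple root is $2\eps_m$, not $\eps_m$), so that the coordinate-sum functional takes the value $2$ rather than $1$ on it — but the argument is insensitive to that constant, only to its positivity, so the conclusion is robust. It is also worth noting, as the footnote in the excerpt indicates, that this corrects a minor slip in the corresponding argument of \cite[Lemma 3.7]{Hu3}; the corrected computation above is the one to record. One could phrase the same proof slightly more invariantly by saying that $\<\mu-\lam,\varpi\>\geq 0$ for the appropriate fundamental-type functional $\varpi$ dual to the short roots, but the bare coordinate-sum computation is cleanest and is exactly what is carried out in the paragraph preceding the lemma statement.
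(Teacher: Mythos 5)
Your proof is correct and is essentially identical to the paper's own argument, which appears in the paragraph immediately preceding the lemma: both write $\mu-\lam$ as a non-negative integer combination of the simple roots of type $C_m$ and apply the coordinate-sum functional to get $2(a-b)=2a_m\geq 0$, contradicting $a<b$. No issues.
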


Recall that $\bigl(V_K^{\otimes n}/V_K^{\otimes n}\bb_{n,K}^{(f)}\bigr)^{\ast}\hookrightarrow\bigl(V_K^{\otimes n}\bigr)^{\ast}\cong V_K^{\otimes n}$. Henceforth, we shall use this embedding to identify  $\bigl(V_K^{\otimes n}/V_K^{\otimes n}\bb_{n,K}^{(f)}\bigr)^{\ast}$ as a $K$-subspace of  $V_K^{\otimes n}$. Since the isomorphism $\bigl(V_K^{\otimes n}\bigr)^\ast\cong V_K^{\otimes n}$ is a right $\bb_{n,K}$-module isomorphism, it follows that for any $x\in V_K^{\otimes n}$, $x\in \bigl(V_K^{\otimes n}/V_K^{\otimes n}\bb_{n,K}^{(f)}\bigr)^{\ast}$ if and only if $x\bb_{n,K}^{(f)}=0$.

\begin{lem}\label{keylem4} Suppose that $\cha K>\min\{n-f+2m,n\}$. Then $V_K^{\otimes n}\bb_{n,K}^{(f)}\cap\bigl(V_K^{\otimes n}/V_K^{\otimes n}\bb_{n,K}^{(f)}\bigr)^{\ast}=0$. In particular, $$
V_K^{\otimes n}=V_K^{\otimes n}\bb_{n,K}^{(f)}\oplus\bigl(V_K^{\otimes n}/V_K^{\otimes n}\bb_{n,K}^{(f)}\bigr)^{\ast} .
$$
\end{lem}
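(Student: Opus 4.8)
The plan is to reduce the statement to the non-degeneracy of the form $\langle\,,\,\rangle^{\otimes n}$ on the submodule $W:=V_K^{\otimes n}\bb_{n,K}^{(f)}$, and then to prove that non-degeneracy by showing $W$ is a semisimple $KSp(V)$-module which is moreover a direct sum of whole isotypic components of $V_K^{\otimes n}$. First I would record the routine reductions. As observed just before the statement, $\bigl(V_K^{\otimes n}/V_K^{\otimes n}\bb_{n,K}^{(f)}\bigr)^{\ast}$, viewed inside $V_K^{\otimes n}$, equals $\{x\in V_K^{\otimes n}\mid x\bb_{n,K}^{(f)}=0\}$; combining this with the $\bb_{n,K}$-invariance $\langle xb,y\rangle=\langle x,yb^{\sharp}\rangle$ of $\langle\,,\,\rangle^{\otimes n}$ — where $b\mapsto b^{\sharp}$ is the diagram anti-involution of $\bb_{n,K}$, which fixes every $s_i$ and $e_i$ and hence fixes each two-sided ideal $\bb_{n,K}^{(f)}$ — and the non-degeneracy of $\langle\,,\,\rangle^{\otimes n}$ on $V_K^{\otimes n}$, one identifies this subspace with the orthogonal complement $W^{\perp}$ of $W$. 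Since $\dim_K W^{\perp}=\dim_K V_K^{\otimes n}-\dim_K W$ automatically, the first assertion $W\cap W^{\perp}=0$ already forces the orthogonal decomposition $V_K^{\otimes n}=W\oplus W^{\perp}$; and all of this may be verified after extending scalars to $\overline{K}$, so I may assume $K=\overline{K}$. Thus it remains to show that $\langle\,,\,\rangle^{\otimes n}$ restricts to a non-degenerate form on $W$.

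The main step is to show that $W$ is a semisimple $KSp(V)$-module all of whose composition factors $L(\nu)$ are labelled by weights $\nu$ forming a singleton block of $S_K^{sy}(m,n)$. If $\cha K>n$, then by \cite[Theorem 5.16]{Ga} the whole of $V_K^{\otimes n}$ is a semisimple $KSp(V)$-module; since it is a faithful module over $S_K^{sy}(m,n)$ this forces that algebra to be semisimple, so every block is a singleton and in particular $W$ is semisimple. In the remaining case $\cha K>n-f+m$ with $\cha K\le n$ (so necessarily $f>m$), I would use that $W=V_K^{\otimes n}(e_1e_3\cdots e_{2f-1})\bb_{n,K}=(\,\omega^{\otimes f}\otimes V_K^{\otimes(n-2f)}\,)\bb_{n,K}$ with $\omega=\sum_k v_k^{\ast}\otimes v_k$: since the generators $s_i$ and $e_j$ of $\bb_{n,K}$ send a weight vector to a sum of weight vectors of the same weight, every weight of $W$ has size (sum of the absolute values of its coordinates) at most $n-2f$. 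Hence every composition factor $L(\nu)$ of $W$ has $\nu$ dominant with $\sum_i\nu_i\le n-2f$, and for such $\nu$ one has $\langle\nu+\rho,\alpha^{\vee}\rangle\le (n-2f)+2m-1$ for every $\alpha\in\Phi^{+}$. Since $f>m$ gives $\cha K>n-f+m\ge (n-2f)+2m-1$, every such $\nu$ lies in the lowest $p$-alcove, hence forms its own block with $\Delta(\nu)=L(\nu)=\nabla(\nu)$ and $\Ext^1_{KSp(V)}(L(\nu),L(\nu))=0$; a module with only such composition factors is semisimple, so $W$ is.

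Finally I would use the semisimplicity of $W$ together with the surjectivity of $\varphi_K$. Each such $L(\nu)$ is self-dual for $Sp(V)$ (as $-w_0=\mathrm{id}$ in type $C_m$), so the central block idempotent $e_\nu\in S_K^{sy}(m,n)$ attached to the singleton block of $\nu$ is fixed by the anti-involution of $S_K^{sy}(m,n)$, and $e_\nu\End_{KSp(V)}(V_K^{\otimes n})\cong M_{a_\nu}(K)$, where $a_\nu$ is the multiplicity of $L(\nu)$ in $V_K^{\otimes n}$. Since $\varphi_K\colon\bb_{n,K}^{\op}\twoheadrightarrow\End_{KSp(V)}(V_K^{\otimes n})$ is surjective by \cite{DDH}, the image of the two-sided ideal $\bb_{n,K}^{(f)}$ is a two-sided ideal of $\End_{KSp(V)}(V_K^{\otimes n})$, and after cutting by $e_\nu$ it becomes an ideal of $M_{a_\nu}(K)$, hence zero or everything; therefore $W=\bigoplus_{\nu\in S}e_\nu V_K^{\otimes n}$ for a suitable set $S$ of singleton-block labels, i.e.\ $W$ is a direct sum of whole isotypic components of $V_K^{\otimes n}$. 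Because each $e_\nu V_K^{\otimes n}$ (with $\nu$ a self-dual singleton block) is a $KSp(V)$-direct summand of $V_K^{\otimes n}$ orthogonal to all the other block components under $\langle\,,\,\rangle^{\otimes n}$, and this form is non-degenerate on $V_K^{\otimes n}$, it is non-degenerate on each $e_\nu V_K^{\otimes n}$ and hence on $W$; this gives $W\cap W^{\perp}=0$, and the lemma follows.

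The step I expect to be the main obstacle is the alcove estimate in the case $\cha K>n-f+m$: pinning down the weights of $W$ and the bound on $\langle\nu+\rho,\alpha^{\vee}\rangle$ so that exactly the hypothesis $\cha K>n-f+m$ (rather than something stronger, say $\cha K>n-2f+2m$) is what is used. The block and idempotent bookkeeping of the last paragraph is more routine, but requires care since $V_K^{\otimes n}$ itself need not be semisimple in the second case.
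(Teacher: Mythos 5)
Your proof is correct, but it reaches the conclusion by a genuinely different route from the paper's. The paper also reduces to a block-theoretic statement, but a different one: it shows directly, by an explicit case analysis on the affine reflections $s_{\beta_1,n_1p}$ (with $\beta_1=2\eps_i$ versus $\beta_1=\eps_i-\eps_j$) occurring in the linkage principle, that no weight of $\Lam_f^+$ is linked to a weight of $\Lam_f^c$; since $V_K^{\otimes n}\bb_{n,K}^{(f)}$ has all composition factors labelled by $\Lam_f^+$ and $\bigl(V_K^{\otimes n}/V_K^{\otimes n}\bb_{n,K}^{(f)}\bigr)^{\ast}$ by $\Lam_f^c$, the two subspaces lie in complementary sums of block components and so intersect trivially. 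You instead analyse only $W=V_K^{\otimes n}\bb_{n,K}^{(f)}$: the estimate $\langle\nu+\rho,\alpha^{\vee}\rangle\le (n-2f)+2m-1<\cha K$ places every composition factor of $W$ in the lowest alcove (the inequality $n-f+m\ge n-2f+2m-1$ holds exactly when $f\ge m-1$, which your case split guarantees), and the two-sided-ideal/matrix-algebra argument upgrades semisimplicity of $W$ to the statement that $W$ is a direct sum of \emph{whole} isotypic block components, each self-dual and mutually orthogonal under $\langle\,,\,\rangle^{\otimes n}$, whence the form is non-degenerate on $W$. You are right to insist on the ``whole isotypic component'' step: a proper submodule of an isotypic component can be totally isotropic, so semisimplicity of $W$ alone would not suffice. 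The trade-off is that the paper's linkage computation delivers the explicit block separation of $\Lam_f^+$ from $\Lam_f^c$ (reused in the proof of Theorem \ref{mainthm4} to obtain the Hom-vanishing between the two summands), whereas your argument avoids that delicate reflection-by-reflection analysis at the cost of invoking the surjectivity of $\varphi_K$ from \cite{DDH} and the self-duality of simple $Sp_{2m}$-modules; your block decomposition of $V_K^{\otimes n}$ into the components meeting $W$ and the rest recovers the same Hom-vanishing, so nothing downstream is lost.
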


\begin{proof} Without loss of generality, we can assume $K=\bk$ is an algebraically closed field. To prove the first part of the lemma, it suffices to show that for any $\lam\in\Lam_f^c$ and $\mu\in\Lam_f^+$, $L(\lam), L(\mu)$ are not in the same block as $KSp(V)$-module.

Suppose that this is not the case. Set $p:=\cha K$. If $p>n$ then by \cite[Theorem 5.16]{Ga}, $V_K^{\otimes n}$ is a semisimple $KSp(V)$-module. In this case the lemma clearly holds. Henceforth, we assume that $p>n-f+2m$. By the linkage principal (\cite[Part II, Chapter 6]{Ja}) and Lemma \ref{order}, we must be able to find $\lam=\nu_r\in\Lam_f^c$, $\mu=\nu_0\in\Lam_f^+$ and dominant weights $\nu_1,\nu_2,\cdots,\nu_{r-1}$ of $Sp(V)$, such that $\nu_0\uparrow\nu_1\uparrow\cdots\uparrow\nu_{r-1}\uparrow\nu_r$, where $\mu\uparrow\lam$ means that $\mu<\lam$, and there exist affine reflections $s_{\beta_1,n_1p},\cdots,s_{\beta_r,n_rp}$ (where $\beta_1,\cdots,\beta_r\in\Phi^+$, $n_1,\cdots,n_r\in\Z$) such that $$
\nu_0=\mu<\nu_1=s_{\beta_1,n_1p}\cdot\mu<\nu_2=s_{\beta_2,n_2p}s_{\beta_1,n_1p}\cdot\mu<\cdots<\nu_r=s_{\beta_r,n_rp}\cdots s_{\beta_1,n_1p}\cdot\mu=\lam ,
$$
where $$
s_{\beta_i,n_ip}\cdot\mu:=\mu-\<\mu+\rho,\beta_i^{\vee}\>\beta_i+n_ip\beta_i ,
$$
and $\rho=\sum_{i=1}^{m}(m-i+1)\eps_i$ is the half sum of all positive roots. In particular, $\nu_2,\cdots,\nu_{r-1}\in\Lam^+$.

If $\beta_1=2\eps_i$ for some $1\leq i\leq m$, then we have $$
s_{\beta_1,n_1p}\cdot\mu=\sum_{s\neq i}\mu_s\eps_s+(2pn_1-\mu_i-2(m+1-i))\eps_i\in\Lam^+ ,
$$
which implies $n_1\in\Z^{\geq 1}$. As $\mu\in\Lam_f^+$, we have $$
2pn_1-\mu_i-2(m-i+1)\geq 2p-\mu_i-2m>2(n+2m-f)-\mu_i-2m=n-2f-\mu_i+n+2m>n,
$$
which is impossible because $s_{\beta_1,n_1p}\cdot\mu\in\Lam^+$.

If $\beta_1=\eps_i+\eps_j$ for some $1\leq i<j\leq m$, then we have  $$
s_{\beta_1,n_1p}\cdot\mu=\sum_{s\neq i,j}\mu_s\eps_s+(pn_1-\mu_j-2m+i+j-2)\eps_i+(pn_1-\mu_i-2m+i+j-2)\eps_j\in\Lam^+ .
$$
Since $|\mu_i-i+j|<n-2f+m$ and $p>n+2m-f$, it follows that if $n_1\geq 1$ then $$
(pn_1-\mu_j-2m+i+j-2)+(pn_1-\mu_i-2m+i+j-2)=2pn_1-(\mu_i+\mu_j)-4m+2(i+j-2)>n,
$$
which is impossible. Thus we must have that $n_1=0$.

If $\beta_1=\eps_i-\eps_j$ for some $1\leq i<j\leq m$, then we have  $$
s_{\beta_1,n_1p}\cdot\mu=\sum_{s\neq i,j}\mu_s\eps_s+(pn_1+\mu_j+i-j)\eps_i+(-pn_1+\mu_i-i+j)\eps_j\in\Lam^+ .
$$
Since $|\mu_i-i+j|<n-2f+m$ and $p>n+2m-f$, it follows that $n_1=0$ again.

As a consequence, we can deduce that $\nu_1=s_{\beta_1,0}\cdot\mu\in\Lam_f^+$. Now replacing
$\mu$ with $s_{\beta_1,n_1p}\cdot\mu$ and continuing the same argument, we shall finally show that $\nu_2,\cdots,\nu_r\in\Lam_f^+$, which contradicts to our assumption that $\lam=\nu_r\in\Lam_f^c$. This completes the proof of the first part of the lemma.

As $\dim V_K^{\otimes n}\bb_{n,K}^{(f)}+\dim\bigl(V_K^{\otimes n}/V_K^{\otimes n}\bb_{n,K}^{(f)}\bigr)^{\ast}=\dim V_K^{\otimes n}$, it is clear that
the second part of the lemma follows from the first part of the lemma.
\end{proof}

Recall the definition of $\varepsilon_K$ in (\ref{zzz}). We can rewrite the homomorphism $\varepsilon_K: (V_K^{\otimes n})^{\oplus r}\rightarrow (V_K^{\otimes n})^{\oplus s}$ as follows: $$\begin{aligned}
\varepsilon_K: (V_K^{\otimes n})^{\oplus r} & \rightarrow (V_K^{\otimes n})^{\oplus s}\\
w_1\oplus\cdots\oplus w_r&\mapsto \sum_{k=1}^{r}\varepsilon_{k1}(w_k)\oplus\cdots\oplus \sum_{k=1}^{r}\varepsilon_{ks}(w_k),
\end{aligned}
$$
where for each $1\leq k\leq r, 1\leq t\leq s$, $\varepsilon_{kt}\in\End_{U_K(\mathfrak{sp}_{2m})}(V_K^{\otimes n})$.

Applying Theorem \ref{ddh}, we can find $b_{kt}\in\bb_{n,K}$ such that $\varphi_K(b_{kt})=\varepsilon_{kt}$ for each $1\leq k\leq r, 1\leq t\leq s$. Since $\bb_{n,K}^{(f)}\bb_{n,K}\subseteq\bb_{n,K}^{(f)}$, it follows that $\varepsilon_{kt}(V_K^{\otimes n}\bb_{n,K}^{(f)})\subseteq V_{K}^{\otimes n}\bb_{n,K}^{(f)}$. As a consequence, we see that $\varepsilon_K$ induces a $S_K^{sy}(m,n)$-module homomorphism $$
{\eps}_{f,K}:  (V_K^{\otimes n}/V_K^{\otimes n}\bb_{n,K}^{(f)})^{\oplus r}\rightarrow (V_K^{\otimes n}/V_K^{\otimes n}\bb_{n,K}^{(f)})^{\oplus s} .
$$

Now we can give the proof of the fourth main result of this paper.

\medskip
\noindent
{\textbf{Proof of Theorem \ref{mainthm4}}:}  By Lemma \ref{approx}, we know that $\delta_{f,K}$ is injective. It follows from $\varepsilon_K\circ\delta_K=0$ that ${\varepsilon}_{f,K}\circ\delta_{f,K}=0$. We claim that $\Ker\varepsilon_{f,K}=\im(\delta_{f,K})$. t suffices to show that $\Ker\varepsilon_{f,K}\subseteq\im(\delta_{f,K})$.

Let $w_1,\cdots,w_r\in V_K^{\otimes n}$ such that $\varepsilon_{f,K}(\overline{w_1}\oplus\cdots\oplus\overline{w_r})=\overline{0}$. We set $x:={w_1}\oplus\cdots\oplus{w_r}$. Using Lemma \ref{keylem4}, we can decompose $x$ as $x=y+z$ such that $y\in \Bigl(\bigl(V_K^{\otimes n}/V_K^{\otimes n}\bb_{n,K}^{(f)}\bigr)^{\ast}\Bigr)^{\oplus r}$ and $z\in \bigl(V_K^{\otimes n}\bb_{n,K}^{(f)}\bigr)^{\oplus r}$. It follows that $$
\eps_K(y)=\eps_K(x-z)=\eps_K(x)-\eps_K(z)\in \bigl(V_K^{\otimes n}\bb_{n,K}^{(f)}\bigr)^{\oplus s}.
$$

On the other hand, since $\bigl(V_K^{\otimes n}/V_K^{\otimes n}\bb_{n,K}^{(f)}\bigr)^{\ast}$ is filtered by some $\Delta(\lam)$ with $\lam\in\Lam_f^c$, while
$V_K^{\otimes n}\bb_{n,K}^{(f)}$ is filtered by some $\nabla(\mu)$ with $\mu\in\Lam_f^+$. It follows from that $$
\Hom_{S_K^{sy}(m,n)}\Bigl(\bigl(V_K^{\otimes n}/V_K^{\otimes n}\bb_{n,K}^{(f)}\bigr)^{\ast},V_K^{\otimes n}\bb_{n,K}^{(f)}\Bigr)=0 . $$
Hence by Lemma \ref{keylem4} we must have that $\eps_K(y)=0$. Applying Corollary \ref{sym1} and (\ref{zzz}), we can deduce that
$y=\delta_K(a)$ for some $a\in S_K^{sy}(m,n)$. It follows that $$
x+\bigl(V_K^{\otimes n}\bb_{n,K}^{(f)}\bigr)^{\oplus r}=y+\bigl(V_K^{\otimes n}\bb_{n,K}^{(f)}\bigr)^{\oplus r}=\delta_{f,K}(a+\Ker\pi_{f,K}),
$$
which implies our claim. Therefore, by Theorem \ref{ASthm}, $S_{f,K}^{sy}(m,n)$ has the double centralizer property with respect to $V_K^{\otimes n}/V_K^{\otimes n}\bb_{n,K}^{(f)}$.

Finally, by \cite[Theorem 1.8]{Hu3}, we know that the natural map $$
\bb_{n,K}^{\rm{op}}\rightarrow\End_{KSp(V)}(V_K^{\otimes n}/V_K^{\otimes n}\bb_{n,K}^{(f)})=\End_{S_K^{sy}(m,n)}(V_K^{\otimes n}/V_K^{\otimes n}\bb_{n,K}^{(f)})=\End_{S_{f,K}^{sy}(m,n)}(V_K^{\otimes n}/V_K^{\otimes n}\bb_{n,K}^{(f)})
$$
is surjective. Combining this with the double centralizer property of $S_{f,K}^{sy}(m,n)$ with respect to $V_K^{\otimes n}/V_K^{\otimes n}\bb_{n,K}^{(f)}$ we see that Conjecture \ref{conj2} and hence Conjecture \ref{conj1} hold in this case. This completes the proof of the theorem.
\medskip

\begin{cor} Suppose that $\cha K>\min\{n-f+2m,n\}$. Then Conjecture \ref{conj2} and hence Conjecture \ref{conj1} hold in this case.
\end{cor}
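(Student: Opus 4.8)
The plan is to obtain this corollary as a purely bookkeeping consequence of Theorem \ref{mainthm4}. First I would note that the natural map singled out in the final assertion of Theorem \ref{mainthm4},
\[
S_K^{sy}(m,n)\longrightarrow\End_{\bb_{n,K}/\bb_{n,K}^{(f)}}\bigl(V_K^{\otimes n}/V_K^{\otimes n}\bb_{n,K}^{(f)}\bigr)\cong\End_{\bb_{n,K}}\bigl(V_K^{\otimes n}/V_K^{\otimes n}\bb_{n,K}^{(f)}\bigr),
\]
is by construction exactly the map $\psi'_{f,K}$, the composite of the surjection $\pi_{f,K}$ of \eqref{pif0} with the inclusion $\iota_{f,K}$ of \eqref{commdiag1}. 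Hence Theorem \ref{mainthm4} asserts precisely that $\psi'_{f,K}$ is surjective whenever $\cha K>\min\{n-f+m,n\}$, which is the statement of Conjecture \ref{conj2}.

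It then remains to recover Conjecture \ref{conj1} from Conjecture \ref{conj2}, an implication already indicated in the discussion preceding Conjecture \ref{conj2}. Here I would argue as follows: since $\pi_{f,K}$ is surjective, surjectivity of $\psi'_{f,K}=\iota_{f,K}\circ\pi_{f,K}$ forces $\iota_{f,K}$ to be onto, so that $S_{f,K}^{sy}(m,n)=\End_{\bb_{n,K}}\bigl(V_K^{\otimes n}/V_K^{\otimes n}\bb_{n,K}^{(f)}\bigr)$. By Definition \ref{sf} the left-hand side equals $\psi_{f,K}(KSp(V))$, while the right-hand side is canonically $\End_{\bb_{n,K}/\bb_{n,K}^{(f)}}\bigl(V_K^{\otimes n}/V_K^{\otimes n}\bb_{n,K}^{(f)}\bigr)$; thus $\psi_{f,K}$ is surjective, which is Conjecture \ref{conj1}. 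Since the hypothesis $\cha K>\min\{n-f+m,n\}$ is unchanged by passing to the algebraic closure $\bk$, the algebraically-closed setting required in the statement of Conjecture \ref{conj1} is accommodated without loss.

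I do not expect any genuine obstacle in this step: all of the substantive content — Lemma \ref{approx}, Lemma \ref{keylem4}, and the linkage-principle argument separating $\Lam_f^+$ from $\Lam_f^c$ — has already been carried out inside the proof of Theorem \ref{mainthm4}, and the corollary merely repackages its conclusion. The only points to spell out are the identification of the map of Theorem \ref{mainthm4} with $\psi'_{f,K}$ and the elementary implication Conjecture \ref{conj2} $\Rightarrow$ Conjecture \ref{conj1}, both of which are visible from the commutative diagram \eqref{commdiag1}.
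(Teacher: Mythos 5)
Your proposal is correct and matches the paper's treatment: the corollary is indeed just a repackaging of the final assertion of Theorem \ref{mainthm4} (whose proof already ends by observing that Conjecture \ref{conj2}, and hence Conjecture \ref{conj1}, follows), with the identification of the map there with $\psi'_{f,K}$ and the elementary implication via the diagram \eqref{commdiag1} being exactly the reductions the paper sets up before stating Conjecture \ref{conj2}.
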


\bigskip

\end{document}